\title{}
\author{}
\numberwithin{equation}{section}  
\newtheorem{theorem}{\bf Theorem}[section]
\newtheorem{definition}{\bf Definition}[section]
\newtheorem{example}[theorem]{\bf Example}
\theoremstyle{remark} 
\newtheorem{remark}[theorem]{\bf Remark}
\newtheorem{lemma}[theorem]{\bf Lemma}
\newcommand*{\rom}[1]{\expandafter\@slowromancap\romannumeral #1@}
\newcommand{\trinl}{\ensuremath{|\!|\!|}}
\newcommand{\trinr}{\ensuremath{|\!|\!|}}
\title{\bf LOWEST-ORDER NONSTANDARD FINITE ELEMENT METHODS FOR TIME-FRACTIONAL BIHARMONIC PROBLEM} 
\author{Shantiram Mahata\textsuperscript{1}\thanks{\texttt{shantiram@math.iitb.ac.in}}\;\;, \; 
Neela Nataraj\textsuperscript{1}
\thanks{\texttt{neela@math.iitb.ac.in}}\;\;,\; and
Jean-Pierre Raymond\textsuperscript{2}
\thanks{\texttt{raymond@math.univ-toulouse.fr}}\\
\textsuperscript{1} {\small Department of Mathematics, Indian Institute of Technology Bombay, Mumbai, Maharashtra - 400076, India}\\
\textsuperscript{2}{\small  Universit{\'e} Paul Sabatier Toulouse {\rm III} \& CNRS, Institut de Math{\'e}matiques, 31062 Toulouse Cedex 9, France; Visiting Professor, Indian Institute of Technology Bombay, Mumbai, Maharashtra - 400076, India}}
\theoremstyle{definition}
\newtheorem{rem}{Remark}[section]
\numberwithin{equation}{section}
\newcommand{\cT}{\mathcal T}
\newcommand{\pw}{\mathrm{pw}}
\newcommand{\nc}{\mathrm{M}}
\newcommand{\half}{\frac{1}{2}}
\DeclareMathOperator{\E}{\mathcal{E}}
\newcommand{\M}{\mathrm{M}}
\newcommand{\T}{\mathcal{T}}
   \newcounter{const}
\NewDocumentCommand{\constant}{o}
 {
  \IfValueTF{#1}%
  {C_{#1}}%
  {\refstepcounter{const}%
  C_{\theconst}}%
 }
\pgfplotsset{compat=1.5}
\begin{document}
\date{}

\maketitle


\begin{abstract}
     In this work, we consider an initial-boundary value problem for a time-fractional biharmonic equation in a bounded polygonal domain with a Lipschitz continuous boundary in $\mathbb{R}^2$ with clamped boundary conditions. After establishing the well-posedness, we focus on some regularity results of the solution with respect to the regularity of the problem data. The spatially semidiscrete scheme covers several popular lowest-order piecewise-quadratic finite element schemes, namely, Morley,  discontinuous Galerkin, and $C^0$ interior penalty methods, and includes both smooth and nonsmooth initial data. Optimal order error bounds with respect to the regularity assumptions on the data are proved for both homogeneous and nonhomogeneous problems. The numerical experiments validate the theoretical convergence rate results.
\end{abstract}
\noindent {\bf Key words:} Time-fractional equation, biharmonic,  lowest-order FEM, smooth and nonsmooth initial data, semidiscrete, error estimates 

\noindent{\bf AMS subject classifications:}  35R11, 65M15, 65M60


\section{Introduction}\footnote{ In the first version, even if the results are mathematically correct, they are not optimally written since we assumed that $\Omega$ is convex and introduced some results which are also valid in the case of nonconvex polygons while some other results are valid only in the case of convex polygons. This version extends the results to the case when $\Omega$ is a polygonal domain with a Lipschitz continuous boundary.}
Let $\Omega$ be a bounded  polygonal domain with a Lipschitz continuous boundary $\partial\Omega$ in $\mathbb{R}^2$ and $T$ be a fixed positive real number. For $0<\alpha<1$, consider the following  initial-boundary value problem for time-fractional biharmonic equation that seeks {$u(t)\in H_0^2(\Omega)$} such that 
\begin{align}\label{bhmaineqn}
\begin{aligned}
\partial_t^{\alpha}u(t)+Au(t)&= f(t)  \text{ in }  \Omega, \; 0<t\le T, \\
u(0)&= u_0 \text{ in } \Omega,\\
\end{aligned}
\end{align}
where $A$ is the unbounded operator in $L^2(\Omega)$ corresponding to the biharmonic operator $\Delta^2$ with clamped boundary conditions.
For an absolutely continuous real function $u$ over $[0,T]$, $\partial_t^{\alpha}u(t)$  denotes the Caputo fractional derivative defined by
\begin{align}\label{caputoderiv}
\partial_t^{\alpha}u(t)= \frac{1}{\Gamma(1-\alpha)}\int_{0}^{t}(t-s)^{-\alpha}u'(s)\,ds
=\mathcal{I}^{1-\alpha}u'(t), \;\; 0<\alpha<1,
\end{align} 
where $u'(t):=\partial_tu(t)$ and $\mathcal{I}^{\beta}$, $0<\beta<\infty$, denotes the Riemann-Liouville fractional integral defined by 
\begin{align}\label{rlfracintegral}
\mathcal{I}^{\beta}v(t)=\int_{0}^{t}\kappa_{\beta}(t-s)v(s)\,ds, \quad \kappa_{\beta}(t):=t^{\beta-1}/\Gamma(\beta),
\end{align}
with the gamma function $\Gamma(z)=\int_{0}^{\infty}e^{-t}t^{z-1}\;dt,\; \text{Re}(z)>0$.  In Definition \ref{weak-sol}, we will use \eqref{caputoderiv} for absolutely continuous functions with values in a Hilbert space $V^*$ introduced later. 


It is well-known that fractional order models describe physical phenomena more accurately compared to the usual integer-order models (cf. \cite{podlubny}). The equation in model (\ref{bhmaineqn}) represents a particular case of the time-fractional Cahn-Hilliard equation \cite{huangstynes21,maskarikaraa22}.

The main goal of this work is to study the convergence analysis of semidiscrete approximation with lowest-order nonstandard finite element methods (FEMs).  Prior to the development of numerical methods, we first prove the well-posedness of problem \eqref{bhmaineqn}.

To present our contributions, we introduce below some standard notations.  For any real $r$,  $H^r(\Omega)$ denotes the standard Sobolev space associated with the Sobolev-Slobodeckii semi-norm $|\cdot|_{H^{r}(\Omega)}$ (cf. \cite{grisvard92}). Let $H^r_0(\Omega)$ denote the closure of $D(\Omega)$ in $H^r(\Omega)$, $V:= H^2_0(\Omega) $, and let $H^{-r}(\Omega)=(H^r_0(\Omega))^{*}$ be the dual of $H^r_0(\Omega)$. The notations $(\cdot,\cdot)$ and $\|\cdot\|$  denote the $L^2$ scalar product and $L^2$ norm in  ${\Omega}$.
For a Hilbert space $H$ with norm $\|\cdot\|_H$, let $L^{p}(0,T;H)$, $1\le p<\infty$ denote the standard Bochner space equipped with the norm $\|g\|_{L^{p}(0,T;H)}=(\int_{0}^{T}\|g(t)\|_H^p\,dt)^{1/p}$. If $p=\infty$, the space $L^{\infty}(0,T;H)$ has the norm $\|g\|_{L^{\infty}(0,T;H)}=\inf\{ C: \|g(t)\|_H\le C \text{ almost everywhere on } (0,T)\}$. In addition, for $m\ge 0$, $1\le p\le \infty$, 
$W^{m,p}(0,T;H)$ $:=\{g:(0,T)\rightarrow H \text{ such that } \frac{d^jg}{dt^j}\in L^{p}(0,T;H) \text { for } 0\le j\le m\}$ is equipped with the norm $\|g\|_{W^{m,p}(0,T;H)}=\sum_{j=0}^{m}\|\frac{d^jg}{dt^j}\|_{L^{p}(0,T;H)}$ (cf. \cite{evans10}).
Hereafter, $\gamma_0\in (1/2,1]$ is the elliptic regularity index of the biharmonic problem introduced in Section \ref{bhpreliminaries}. Throughout the paper, $a\lesssim b$ denotes $a\le C b$, where $C$ is a positive generic constant that may depend on fractional order $\alpha$ and the final time $T$ but is independent of the spatial mesh size. 

 The main contributions of this work are summarized below.
\begin{itemize}
    \item [1.] We study the existence, uniqueness, and stability of the solution of time-fractional biharmonic problem \eqref{bhmaineqn} with clamped boundary conditions. In  Theorem \ref{bhwellposedness-thm}, we prove that when initial data $u_0$ is simply an element of $L^2(\Omega)$ and source function $f$ belongs to $W^{1,1}(0,T;V^{*})$, problem \eqref{bhmaineqn} admits a unique weak solution   with
\begin{align*}
\|u\|_{L^1(0,T;V)}+\|u'\|_{L^1(0,T;V^*)}\lesssim  \|u_0\| +  \|f\|_{W^{1,1}(0,T;V^*)}.    
\end{align*}
In addition, we also derive regularity results for the solutions of both homogeneous and nonhomogeneous problems useful for the error analysis. 

\item [2.] In \eqref{bhritzprjn}, we introduce a novel Ritz projection $\mathcal{R}_h: V \rightarrow V_h$, where $V_h$ denotes the space of lowest-order piecewise-quadratic polynomials for \eqref{bhmaineqn} {with clamped boundary conditions } in the proposed semidiscrete schemes and includes  popular Morley, discontinuous Galerkin (dG), and $C^0$ interior penalty ($C^0$IP) methods. We establish the quasi-optimal approximation property displayed below in  Lemma \ref{bhritzprjn_approx}. 
$$\|{v-\mathcal{R}_hv}\| + h^{\gamma}\norm[0]{v-\mathcal{R}_hv}_h\lesssim h^{2\gamma}\norm[0]{v}_{H^{2+\gamma }(\Omega)}, $$
 for all $\gamma\in [0,\gamma_0]$  and for all  $v\in H^{2+\gamma }(\Omega)$, where the regularity index $\gamma_0$ belongs to $(1/2,1)$ if $\Omega$ is not convex and $\gamma_0=1$ if $\Omega$ is convex. These properties are crucial to obtaining optimal order error bounds for both smooth and nonsmooth initial data. 

\item[3.] We use an energy argument in a unified framework for the nonstandard FEM analysis with lowest-order piecewise-quadratic polynomials for \eqref{bhmaineqn}.
Since the solution of model \eqref{bhmaineqn} reflects a singular behavior near $t=0$ (see below Theorems \ref{bhhomop_reg} and \ref{bhnonhomop_reg}), a straightforward analysis of numerical scheme is problematic. This is addressed by multiplication by weights of type $t^j$, $j=1,2$ in the semidiscrete scheme. For initial data in $L^2(\Omega)$ and a source term equal to zero, in Theorem \ref{bhsemidisctesmooth_finl} $(i)$ we obtain the following error bound in the $L^2(\Omega)$ and energy norms for each fixed time $t\in (0,T]$,
\begin{align*}
\norm[0]{u(t)-u_h(t)}&\lesssim  \big( h^{2\gamma_0}t^{-\alpha} +h^2t^{-(1+\alpha)/2}\big)\norm[0]{u_0},\\
\norm[0]{u(t)-u_h(t)}_h&\lesssim h^{\gamma_0}t^{-\alpha}\norm[0]{u_0}+ \big( h^{2\gamma_0}t^{-3\alpha/2} +h^2t^{-(\alpha+1/2)}\big)\norm[0]{u_0},
\end{align*} 
where $h$ is the mesh size of the FEM. Similar error bounds are also proved for the nonhomogeneous problem with an appropriate assumption on $f$ and $u_0\in D(A)$ in Theorem \ref{bhsemidisctesmooth_finl} $(ii)$ for $t\in (0,T]$.

\item[4.] Numerical experiments that validate the theoretical results are presented for examples with smooth and nonsmooth initial data.    
\end{itemize}

\medskip \noindent The article is organized as follows:
\begin{itemize}
    \item [-] In Section 2, we introduce some notations, establish the well-posedness of problem  (\ref{bhmaineqn}), and prove regularity results of the exact solution. 
    \item[-] This is followed by a discussion on the semidiscrete schemes and error analysis in Sections 3 and 4. Optimal order error bounds are proved for both smooth and nonsmooth initial data. 
    \item[-] Finally, Section 5 presents the results of the numerical implementations that validate the theoretical estimates. 
\end{itemize}
Below, we compare our findings with those already known from the literature and review the existing results for \eqref{bhmaineqn}.
\begin{itemize}
    \item[-] The well-posedness result in Point (1) above is inspired by the work in \cite{sy11} for second-order fractional diffusion-wave equations. For \eqref{bhmaineqn}, the well-posedness result is summarized in \cite[Appendix]{lietal2022_cnc}. However, in \cite[(A.1) and (A.2)]{lietal2022_cnc}, the well-posedness and regularity results are stated for {\it higher} regularity assumptions on the problem data, for example, initial data $u_0\in D(\Delta^{j+2})$ and source function $f$ such that $\frac{\partial^{l} f (t)}{\partial t^{l}}\in D(\Delta^{j})$, where $j\in \mathbb{N}$ and $l=0,1,2,3$. These studies to \eqref{bhmaineqn} in our work are done comparatively less smoother assumptions on the problem data (cf. Theorems \ref{bhwellposedness-thm}, \ref{bhhomop_reg}, and \ref{bhnonhomop_reg}). 
    \item[-] The introduction of the Ritz projection for lowest-order piecewise-quadratic based FEM  on $H_0^2(\Omega)\cap H^{2+\gamma}(\Omega)$ for all  $\gamma\in [0,\gamma_0]$  and its quasi-optimal approximation properties on $H^{2+\gamma}(\Omega)$ in $L^2(\Omega)$ and energy norms are new. The idea of this operator is borrowed from \cite{cn2022} for the analysis of FEMs for biharmonic plates. In \cite{lietal2022_cnc}, the authors have introduced the Ritz projection on $H_0^2(\Omega)\cap H^{j+1}(\Omega)$, $j\ge 2$ for the analysis of virtual element methods, and have proved optimal projection errors which demand at least $H^3(\Omega)$ regularity of the continuous solution $u(t)$. 
    We also have to mention that the idea of using a Ritz projection is suggested, without proof, in \cite[Remark 3.1]{danumjayaetal2021}, for the Morley finite element approximation of fourth-order nonlinear reaction-diffusion problems.

     There are technical difficulties that arise in the error analysis because (i) the space discretization is not conforming and (ii) the error estimates are established under lower regularity of the exact solution. To overcome these challenges, we introduce a novel Ritz projection $\mathcal{R}_h$ using a smoother (or companion) operator $Q$ in \eqref{bhritzprjn}. This leads to an error equation in \eqref{bherreqn} that is different from those obtained for the conforming second-order problems \cite{mustapha18,karaamustaphapani18}. 
     The idea is also completely new in comparison to the available works for time-fractional fourth-order problems in the literature.  

    \item[-] The semidiscrete convergence analysis in this work is inspired by the technique in \cite{karaamustaphapani18, mustapha18, smrksenergy} for time-fractional parabolic problems with and without memory. These works discuss only second-order problems and cover only the conforming case, where the discrete trial space is a finite-dimensional subspace of $H_0^1(\Omega)$. In contrast, the analysis of conforming FEMs for fourth-order problems demands $C^1$ continuity and requires a high polynomial degree, which imposes many conditions on the vertices and edges of an element, and hence, is prohibitively expensive. Most of the available literature for \eqref{bhmaineqn} discuss {\it simply supported boundary conditions} ($u=\Delta u=0$) that enables a mixed formulation via an auxiliary variable $v=\Delta u$  and subsequently, the reduction of the problem to a system of two second-order equations.    
    
    As far as we know, the semidiscrete error analysis with $u_0\in L^2(\Omega)$ is a totally new result for the time-fractional biharmonic problem with clamped boundary conditions.

    \item[-]  The work in \cite{cuifd2021} deals with a compact finite difference scheme with clamped plate problem, where the author has discretized the spatial derivatives by the Stephenson scheme on uniform meshes, and fractional derivative by L1 scheme on graded time meshes, and established the stability of the scheme and convergence analysis under the assumptions $\|\frac{\partial^{i+j}u(t)}{\partial x^{i}\partial y^{j}}\|\le C$ with $0\le i+j\le 8$ and $\|\frac{\partial^{j}u(t)}{\partial t^{j}}\|\le C  (1+t^{\alpha - j})$, $0\le j\le 2$, for some positive constant $C$. In contrast, the semidiscrete finite element approximation in this work assumes a lower regularity on the solution $u(t)$ and includes the case $u_0\in L^2(\Omega)$.

\end{itemize}
Several numerical methods to \eqref{bhmaineqn} and its nonlinear variants with {\it simply supported } boundary conditions (that is,  $u=\Delta u=0$) are studied in literature with stability and convergence analysis. These methods include, for example, Adomian decomposition method \cite{jafarietal08}, finite difference method \cite{zhang_pu2017},  weak Galerkin method \cite{yazdaniweak2022}, mixed FEMs \cite{liuetal14,liuetal15, liuetal2015, huangstynes21,huangetal22},  
local discontinuous Galerkin methods \cite{weihe14, duetal2017dg}, Petrov-Galerkin method \cite{pg2020}, and spline-based methods \cite{hadhoud19, zhang2020}. 
The FEM based works in \cite{liuetal14,liuetal15, liuetal2015, huangstynes21,huangetal22} for fourth-order problems are studied for simply supported boundary conditions. An auxiliary variable $v=\Delta u$ is introduced for the mixed formulation, and subsequently, the problem reduces to a system of two second-order equations with boundary conditions $u=v=0$.

\section{Well-posedness and regularity}
We introduce function spaces and norms, and present some useful properties of the  Mittag-Leffler functions in Subsection \ref{bhpreliminaries}. The well-posedness results for (\ref{bhmaineqn}) are discussed in Subsection \ref{bhwellposedness}. In Subsection \ref{bhregularityresults}, regularity results for the solutions of both homogeneous and nonhomogeneous problems are derived. 
\subsection{Preliminaries}\label{bhpreliminaries}
\subsection*{The elliptic operator $A$}
To study the well-posedness results, we start with the solution representation. The results are based on the eigenfunction expansion of the corresponding elliptic operator associated with homogeneous boundary conditions. Consider the following eigenvalue problem 
\begin{align}\label{bheigenvluep}
\begin{aligned}
&\Delta^2\phi =\mu\phi \text{ in } \Omega,\; \phi  =\frac{\partial\phi}{\partial \nu}=0  \text{ on } \partial\Omega.
\end{aligned} 
\end{align}
Here $\nu$ is the outward unit normal to $\partial \Omega$. It is well-known \cite[p. 761]{babuskaosborn91} that there exists a family $(\mu_j,\phi_j )_{j=1}^{\infty}$ such that $(\mu_j,\phi_j)$ is solution of  (\ref{bheigenvluep}),  $0<\mu_1\le\mu_2\le\ldots$, $\mu_j\rightarrow\infty$ as $j\rightarrow\infty$, and the corresponding family of eigenfunctions $\{\phi_j\}_{j=1}^{\infty}$ is an orthonormal basis of $L^2(\Omega)$.  
For further development of this work, define the unbounded operator $(A, D(A))$ in $L^2(\Omega)$ by 
\begin{equation*}
D(A) = \{\phi\in H_0^2(\Omega) \mid \Delta^2\phi\in L^2(\Omega)\} \mbox{ and } A\phi=\Delta^2\phi  \text{ for all } \phi\in D(A).
\end{equation*}
It is known that \cite{grisvard92} 
\begin{equation}\label{index}
D(A)\subset V\cap H^{2+\gamma^{*}}(\Omega) \mbox{ with } V=H_0^2(\Omega) \text{ and } \gamma^{*}\in (1/2, 2], 
\end{equation}  
where the index $\gamma^*$ belongs to $[1,2]$ if $\Omega$ is convex and $\gamma^*\in (1/2,1)$ if $\Omega$ is not convex. The value of $\gamma^*$ can be related to the greatest angle of the polygon. Define the domain of the fractional power $A^{r},\,r\in\mathbb{R}^+$ \cite[Chapter 2]{pazybook}, of the operator $A$, by  
{
\begin{equation} \label{deftn:power}
D(A^r)=\Big\{ v = \sum_{j=1}^{\infty}v_j\phi_j \mid v_j\in{\mathbb R},\  \sum_{j=1}^{\infty} \mu_j^{2r}|v_j|^2 < \infty \Big \}, \; A^rv= \sum_{j=1}^{\infty} \mu_j^{r}v_j\phi_j =\sum_{j=1}^{\infty} \mu_j^{r}(v,\phi_j)\phi_j.  
\end{equation}}
The space $D(A^r)$ is 
equipped with the norm 
\begin{align} \label{eqn:norm}
\norm{v}_{D(A^r)}= \bigg(\sum_{j=1}^{\infty} \mu_j^{2r}|(v,\phi_j)|^2\bigg)^{1/2}.
\end{align}
The orthonormality of the basis $\{\phi_j\}_{j=1}^{\infty}$ is used above and throughout in this section. In particular, for $r=0$, we obtain $\norm[0]{\cdot}_{D(A^0)}=\norm[0]{\cdot}_{L^2(\Omega)}=\norm[0]{\cdot}$. 
When $r=1/2$, we have $D(A^{1/2})=V$.  
From \cite[Part II, Chapter 1, Proposition 6.1]{Bensoussan-etal-2007}, it follows that
\begin{equation*}
D(A^r)=[L^2(\Omega),D(A)]_r  \text{ for all } r\in[0,1]. 
\end{equation*}
Therefore, with \eqref{index} and interpolation, we can prove that  
\begin{equation*}
D(A^r)\subset V\cap H^{2+\gamma^{*}(2r-1)}(\Omega) \text{ for all } r\in[1/2,1]. 
\end{equation*}

Now, for $r>0$, let $D(A^{-r}):=(D(A^r))^{*}$ be the dual of $D(A^r)$, consisting of all bounded linear functionals on $D(A^r)$.  Identifying the dual of $L^2(\Omega)$ with itself, we write $D(A^r)\subset L^2(\Omega)\subset D(A^{-r})$. The space $D(A^{-r})$ is a Hilbert space with the norm
\begin{align}\label{bhdualspacenorm}
\norm{g}_{D(A^{-r})}= 
\bigg( \sum_{j=1}^{\infty} \frac{1}{\mu_j^{2r}}|\langle g,\phi_j\rangle|^2 \bigg)^{1/2},
\end{align}
where for $g\in D(A^{-r})$ and $\phi\in D(A^{r})$, the symbol $\langle g,\phi\rangle$ stands for the duality pairing of $g$ with $\phi$. Further, with $g\in L^2(\Omega)$ and $\phi\in D(A^{r})$, we write $\langle g,\phi\rangle=(g,\phi)$ (cf. \cite[Chapter V]{brezis}). 

\begin{rem}\label{index_remark}

   In addition to the index $\gamma^*$ introduced in \eqref{index}, we need to introduce the so-called regularity index (as defined in \cite{brennersung2005} or in \cite{cn2022}) which is an index $\gamma_0\in (1/2,1]$ for which the operator $A$ is an isomorphism from $H^{2+\gamma_0}\cap V$ into $H^{-2+\gamma_0}$. If $\Omega$ is convex, $\gamma_0=1$, while if $\Omega$ is not convex we have $\gamma_0\in (1/2,1)$. If $\Omega$ is not convex, we can choose $\gamma_0$ and $\gamma^*$ such that $\gamma_0=\gamma^*$, and the value of $\gamma_0$ can be related to the greatest angle of the polygon.
   
\end{rem}
Let $a(\cdot,\cdot):V\times V\rightarrow\mathbb{R}$ be the bilinear form associated with $\Delta^2$ defined by 
\begin{align*}
\begin{aligned}
a(v ,w)&=\int_{\Omega}D^2v:D^2w\,dx
=\int_{\Omega}\sum_{i,j=1}^{2}\frac{\partial ^2 v}{\partial x_{i}\partial  x_{j}}\frac{\partial ^2 w}{\partial x_{i}\partial  x_{j}}\,dx, 
\end{aligned}
\end{align*}
where $D^2v$ denotes the Hessian matrix of $v$. Then, we can verify that
\begin{align*}
(Av,w)=a(v,w) \text{ for all } v\in D(A) \text{ and for all } w\in V.    
\end{align*}
{In the convergence analysis of the semidiscrete scheme, we need the following version of Gronwall's lemma (cf. \cite[Lemma 2.1]{chenshih98}).
\begin{lemma}[Gronwall's lemma]\label{gronwalllem}
Assume that $\phi,\psi$, and $\chi$ are three non-negative integrable functions on $[0,T]$. If $\phi$ satisfies 
\begin{align*}
\phi(t)\le \psi(t)+\int_{0}^{t}\chi(s)\phi(s)\,ds \text{ for }   t\in (0,T), 
\end{align*}
then,
\begin{align*}
\phi(t)\le \psi(t)+\int_{0}^{t}\psi(s)\chi(s)e^{\int_{s}^{t}\chi(\tau)\,d\tau }\,ds \text{ for }   t\in (0,T).
\end{align*}
\end{lemma}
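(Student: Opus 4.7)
The plan is to convert the integral inequality into a differential inequality for an auxiliary function, solve it by the integrating factor method, and then feed the result back into the original bound on $\phi$.

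First I would introduce $F(t):=\int_0^t\chi(s)\phi(s)\,ds$. Since $\chi$ and $\phi$ are nonnegative and integrable on $[0,T]$, the product $\chi\phi$ is measurable and nonnegative; assuming (as is standard in this formulation) that $\chi\phi\in L^1(0,T)$, the function $F$ is absolutely continuous on $[0,T]$, with $F(0)=0$ and $F'(t)=\chi(t)\phi(t)$ for a.e.\ $t\in(0,T)$. Substituting the hypothesis $\phi(t)\le\psi(t)+F(t)$ into this identity gives the linear differential inequality
\begin{equation*}
F'(t)\le \chi(t)\psi(t)+\chi(t)F(t)\quad\text{a.e.\ }t\in(0,T).
\end{equation*}

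Next I would apply the integrating factor $\mu(t):=\exp\bigl(-\int_0^t\chi(\tau)\,d\tau\bigr)$, which is absolutely continuous and positive on $[0,T]$. Multiplying the previous inequality by $\mu(t)$ and using the product rule (valid a.e.\ since both factors are absolutely continuous) yields
\begin{equation*}
\frac{d}{dt}\bigl[\mu(t)F(t)\bigr]=\mu(t)\bigl(F'(t)-\chi(t)F(t)\bigr)\le \mu(t)\chi(t)\psi(t)\quad\text{a.e.\ }t\in(0,T).
\end{equation*}
Integrating from $0$ to $t$ and using $F(0)=0$ then gives
\begin{equation*}
\mu(t)F(t)\le \int_0^t\mu(s)\chi(s)\psi(s)\,ds,
\end{equation*}
so that, after dividing by $\mu(t)>0$ and recognizing that $\mu(s)/\mu(t)=\exp\bigl(\int_s^t\chi(\tau)\,d\tau\bigr)$,
\begin{equation*}
F(t)\le \int_0^t\psi(s)\chi(s)\,e^{\int_s^t\chi(\tau)\,d\tau}\,ds.
\end{equation*}
Inserting this into the hypothesis $\phi(t)\le \psi(t)+F(t)$ delivers the desired conclusion.

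There is no serious mathematical obstacle here; this is a classical Gronwall argument and the nonnegativity of $\psi,\chi,\phi$ is exactly what justifies dropping error terms and keeping the inequality in one direction. The only place requiring a little care is justifying that $F$ is absolutely continuous and that the product-rule computation holds almost everywhere under the mere integrability assumption on the data; this is standard from the Lebesgue differentiation theorem and the absolute continuity of the indefinite integral. If one were uncomfortable with the a.e.\ differentiation step, an equivalent route is to iterate the integral inequality (Picard iteration on $F$) and sum the resulting Neumann series, which reproduces the exponential kernel $e^{\int_s^t\chi(\tau)\,d\tau}$ directly; either route gives the same bound.
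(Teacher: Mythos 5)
Your argument is correct: the paper does not prove this lemma at all but simply cites it to the literature (Lemma~2.1 of the reference given there), so there is no in-paper proof to compare against. Your integrating-factor derivation via $F(t)=\int_0^t\chi(s)\phi(s)\,ds$ is the standard classical route and is sound; the only point you rightly flag is that $\chi\phi\in L^1(0,T)$ must be assumed (or the right-hand side of the hypothesis is $+\infty$ and the conclusion is vacuous), and the nonnegativity of $\chi$ is what lets you multiply the bound $\phi\le\psi+F$ by $\chi$ without reversing the inequality. No gap.
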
}
\subsection*{Mittag-Leffler functions and their properties} 
{The Mittag-Leffler functions play a very important role in the analysis of fractional differential equations.} 
The two-parameter Mittag-Leffler function $E_{\alpha,\beta}(\cdot)$ is defined \cite[p. 42]{kilbasbook} by 
\begin{align*}
E_{\alpha,\beta}(z)=\sum_{j=0}^{\infty}\frac{z^j}{\Gamma(j\alpha+\beta)}, \;\; z\in \mathbb{C},\;\; \alpha>0,\, \beta\in \mathbb{R}.
\end{align*}
{It is a generalization of the exponential function $e^{z}$ in the sense that $E_{1,1}(z)=e^{z}$. Further, we can directly verify that it is an entire function in $z$.} The following properties of the Mittag-Leffler function $E_{\alpha,\beta}(\cdot)$ \cite[(1.8.28)]{kilbasbook} or \cite[Theorem 1.4]{podlubny}, and \cite[Lemmas 3.2 and 3.3]{sy11} are essential in Subsections \ref{bhwellposedness} and \ref{bhregularityresults}.
\begin{lemma}[Properties of Mittag-Leffler functions]
Let $r$ be a real number with $\frac{\alpha\pi}{2}<r< \min\{ \pi, \alpha\pi\}$, where $\alpha\in (0,2)$. Then for any real number $\beta$ and $r\le |\arg z|\le \pi$,
\begin{align}\label{bhmlestmte}
|E_{{\alpha},\beta}(z)|  \lesssim  
\begin{cases}
\displaystyle{\frac{1}{1+|z|^2}},\;\; (\beta-\alpha)\in \mathbb{Z}^{-}\cup \{ 0\},\\ 
\displaystyle{\frac{1}{1+|z|}},\;\; \text{otherwise}. 
\end{cases}
\end{align}
Further, for $\alpha,\mu>0$ and $ k \in \mathbb{N}$,
\begin{align}\label{bhmlderivties}
\begin{aligned}
\frac{d^k}{dt^k}E_{\alpha,1}(-\mu t^{\alpha})&= -\mu t^{\alpha-k}E_{\alpha,\alpha-k+1}(-\mu t^{\alpha}),\;\; t\in (0,T],\\
E_{\alpha,\alpha}(-\mu)& \ge 0,\;\; \alpha\in (0,1), \; \mu\ge 0.
\end{aligned}
\end{align}
\end{lemma}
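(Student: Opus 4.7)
My plan is to treat the three assertions separately, leaning on classical results while indicating the key manipulations that justify the stated forms. All three are standard facts about $E_{\alpha,\beta}$, and in a polished write-up I would simply cite the cited references for the body of the proof; here I sketch the arguments I would go through to reconstruct them.

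For the decay bound \eqref{bhmlestmte}, the plan is to invoke the asymptotic expansion
\begin{equation*}
E_{\alpha,\beta}(z)=-\sum_{k=1}^{N}\frac{z^{-k}}{\Gamma(\beta-k\alpha)}+O\bigl(|z|^{-N-1}\bigr)
\qquad \text{as } |z|\to\infty,\ r\le|\arg z|\le\pi,
\end{equation*}
which is obtained from the Hankel-contour integral representation $E_{\alpha,\beta}(z)=\frac{1}{2\pi i}\int_{Ha}\frac{t^{\alpha-\beta}e^{t}}{t^{\alpha}-z}\,dt$ by deforming the contour and extracting residues at the roots of $t^\alpha=z$ lying outside the sector $|\arg t|\le r/\alpha$. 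Combined with continuity (and boundedness) of $E_{\alpha,\beta}$ on the compact part of the sector (where $|z|$ is small), the $N=1$ truncation gives $|E_{\alpha,\beta}(z)|\lesssim 1/(1+|z|)$ in the generic case. When $\beta-\alpha\in\mathbb{Z}^{-}\cup\{0\}$, the leading coefficient vanishes because $1/\Gamma(\beta-\alpha)=0$ at the nonpositive integers (poles of $\Gamma$), so the first nontrivial term is $O(|z|^{-2})$ and the $N=2$ truncation yields $|E_{\alpha,\beta}(z)|\lesssim 1/(1+|z|^{2})$. The control of the remainder on the Hankel contour is the only delicate point.

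For the derivative identity in \eqref{bhmlderivties}, I would differentiate the power series $E_{\alpha,1}(-\mu t^{\alpha})=\sum_{j=0}^{\infty}(-\mu)^{j}t^{\alpha j}/\Gamma(\alpha j+1)$ term by term (legitimate because the series is entire in $t$ for $\alpha>0$). For $k=1$, the $j=0$ term drops out and the substitution $i=j-1$, together with $\alpha j/\Gamma(\alpha j+1)=1/\Gamma(\alpha j)$, produces exactly $-\mu t^{\alpha-1}E_{\alpha,\alpha}(-\mu t^{\alpha})$. The general $k$ follows by an induction of the same flavour: differentiate the generic term $t^{\alpha j-k+1}/\Gamma(\alpha j-k+2)$ and use $(\alpha j-k+1)/\Gamma(\alpha j-k+2)=1/\Gamma(\alpha j-k+1)$ to reindex.

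The nonnegativity of $E_{\alpha,\alpha}(-\mu)$ for $\alpha\in(0,1)$, $\mu\ge 0$ is the most subtle step and is where I expect the main effort to go. I would use the Bernstein-Pollard theorem: for $\alpha\in(0,1)$, the function $\mu\mapsto E_{\alpha,1}(-\mu)$ is completely monotone on $[0,\infty)$, equivalently it is the Laplace transform of a nonnegative spectral measure $\phi_\alpha$. Differentiating the identity from the previous paragraph and using the Laplace representation, $E_{\alpha,\alpha}(-\mu)$ can be expressed as a nonnegative integral of the same measure (alternatively one uses the Titchmarsh contour-deformation formula to write $E_{\alpha,\alpha}(-\mu)=\frac{\sin(\alpha\pi)}{\pi}\int_{0}^{\infty}\frac{r^{\alpha-1}\,dr}{r^{2\alpha}+2\mu r^{\alpha}\cos(\alpha\pi)+\mu^{2}}$), which is manifestly nonnegative since $\sin(\alpha\pi)>0$ for $\alpha\in(0,1)$ and the denominator is positive. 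Once this representation is in hand the conclusion is immediate.
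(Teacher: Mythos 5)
The paper does not prove this lemma at all: it is quoted from the literature (Kilbas et al.\ (1.8.28), Podlubny Theorem 1.4, and Sakamoto--Yamamoto Lemmas 3.2--3.3), so there is no in-paper argument to compare against. Your reconstruction follows exactly the standard proofs in those references --- the Hankel-contour asymptotics with the observation that $1/\Gamma(\beta-\alpha)=0$ when $\beta-\alpha\in\mathbb{Z}^-\cup\{0\}$ upgrades the decay to $|z|^{-2}$, term-by-term differentiation of the entire series for the derivative identity, and complete monotonicity of $E_{\alpha,1}(-\cdot)$ combined with that identity for the sign of $E_{\alpha,\alpha}$ --- and all three sketches are sound. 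One small correction: the parenthetical ``Titchmarsh'' formula you quote is misstated; the correct spectral representation is
\begin{equation*}
E_{\alpha,\alpha}(-\mu)=\frac{\sin(\alpha\pi)}{\pi}\int_{0}^{\infty}\frac{r^{\alpha}e^{-r}\,dr}{r^{2\alpha}+2\mu r^{\alpha}\cos(\alpha\pi)+\mu^{2}},
\end{equation*}
with the factor $e^{-r}$ and the power $r^{\alpha}$ rather than $r^{\alpha-1}$ (your version scales like $\mu^{-1}$ and already fails at $\mu\to0$, where the left side equals $1/\Gamma(\alpha)$). Since this was offered only as an alternative and your primary argument via complete monotonicity is correct, this does not affect the validity of the proposal.
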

\subsection{Well-posedness}\label{bhwellposedness}
\begin{definition}[Weak solution]\label{weak-sol}
Let us assume that $u_0\in L^2(\Omega)$ and $f\in W^{1,1}(0,T;V^*)$. A function $u\in L^1(0,T;V)\cap W^{1,1}(0,T;V^{*})$ is called a weak solution to (\ref{bhmaineqn}) if
the following identities hold. 
\begin{align}
& \partial_t^{\alpha} \langle u(t),v \rangle +a(u(t),v)=\langle f(t),v\rangle \text{ in } L^1(0,T) \text{ for all } v \in V, \label{bhvf} \\ 
& \text{and } \nonumber \\
& \lim_{t\rightarrow 0}\norm[0]{u(t)-u_0}_{V^*}=0. \nonumber
\end{align}
\end{definition}
\begin{remark}
Since we have
\begin{align*}
\|\partial_t^{\alpha}\langle u(t),v \rangle\|_{L^1(0,T)}&= \|\int_{0}^{t}\frac{(t-s)^{-\alpha}}{\Gamma(1-\alpha)}\langle u'(s),v \rangle\,ds\|_{L^1(0,T)}\lesssim \|\langle u'(\cdot),v \rangle \|_{L^1(0,T)} \\
&\lesssim \|u'\|_{L^1(0,T;V^*)}\|v\|_{V} \text{ for all } v\in V,
\end{align*}
we can define $\partial_t^{\alpha} u$ in ${L^1(0,T;V^*)}$ by the equation 
\begin{align*}
\langle \partial_t^{\alpha}u(t),v \rangle:= \partial_t^{\alpha}\langle u(t),v \rangle \text{ for all } v\in V \text{ and all } t\in(0,T].   
\end{align*}
\end{remark}
\begin{theorem}[Well-posedness of (\ref{bhmaineqn})]\label{bhwellposedness-thm}
Assume that $u_0\in L^2(\Omega)$ and $f\in W^{1,1}(0,T;V^*)$. Then, \eqref{bhmaineqn} admits a unique weak solution determined by
\begin{align}\label{bhctssoln}
u(t)=E(t)u_0+\int_{0}^{t}F(t-s)f(s)\,ds,
\end{align}
with the operators $E(t) \in {\mathcal L}(L^2(\Omega))$ and $F(t) \in {\mathcal L}{(V^*)}$ defined by 
\begin{align}\label{bhsolnopertrs}
\begin{aligned}
E(t)u_0&=\sum_{j=1}^{\infty}(u_0,\phi_j)E_{\alpha,1}(-\mu_jt^{\alpha})\phi_j(x),\\
F(t)f(\cdot)&=\sum_{j=1}^{\infty}\langle f(\cdot),\phi_j\rangle t^{\alpha-1}E_{\alpha,\alpha}(-\mu_jt^{\alpha})\phi_j(x), \;\; t\in(0,T].
\end{aligned}
\end{align}
Moreover, we have
\begin{align}\label{bhweasolest}
\|u\|_{L^1(0,T;V)}+\|u'\|_{L^1(0,T;V^*)}\lesssim  \|u_0\| +  \|f\|_{W^{1,1}(0,T;V^*)}.  
\end{align}
\end{theorem}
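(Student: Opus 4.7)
The plan is to prove existence by defining $u$ via the series \eqref{bhctssoln}--\eqref{bhsolnopertrs}, verifying the required regularity and the weak formulation, and then establishing uniqueness by a spectral argument.

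First, I would derive pointwise-in-time operator bounds from the Mittag-Leffler estimates \eqref{bhmlestmte}--\eqref{bhmlderivties}. Since for $\alpha \in (0,1)$ neither $1-\alpha$ nor $\alpha - \alpha = 0$\dots wait, $\alpha - \alpha = 0 \in \mathbb Z^{-}\cup\{0\}$, so $|E_{\alpha,1}(-\mu_j t^\alpha)| \lesssim 1/(1+\mu_j t^\alpha)$ and $|E_{\alpha,\alpha}(-\mu_j t^\alpha)| \lesssim 1/(1+\mu_j t^\alpha)$, respectively. Combining with the elementary inequality $\mu_j/(1+\mu_j t^\alpha)^2 \le t^{-\alpha}/4$ (from $x/(1+x)^2\le 1/4$), and using \eqref{eqn:norm} together with $\|v\|_{V^*}^2 = \sum_j |\langle v,\phi_j\rangle|^2/\mu_j$, I obtain the pointwise bounds
\begin{equation*}
\|E(t)u_0\|_V \lesssim t^{-\alpha/2}\|u_0\|, \qquad \|E'(t)u_0\|_{V^*} \lesssim t^{\alpha/2-1}\|u_0\|,
\end{equation*}
where the derivative of $E(t)u_0$ is computed termwise using \eqref{bhmlderivties} with $k=1$. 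Both exponents $-\alpha/2$ and $\alpha/2-1$ are strictly greater than $-1$, so these quantities are integrable on $(0,T)$; this handles the homogeneous component of \eqref{bhweasolest}.

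Second, for the source term $u_f(t):=\int_0^t F(t-s)f(s)\,ds$, the identity (again from \eqref{bhmlderivties})
\begin{equation*}
(t-s)^{\alpha-1}E_{\alpha,\alpha}(-\mu_j(t-s)^\alpha) \;=\; \frac{1}{\mu_j}\,\frac{d}{ds}E_{\alpha,1}(-\mu_j(t-s)^\alpha)
\end{equation*}
allows me to integrate by parts in $s$, legitimately since $f\in W^{1,1}(0,T;V^*)\hookrightarrow C([0,T];V^*)$ (so $f(0)$ is defined), yielding the key representation
\begin{equation*}
u_f(t) \;=\; A^{-1}f(t) \;-\; A^{-1}E(t)f(0) \;-\; A^{-1}\!\int_0^t E(t-s)f'(s)\,ds,
\end{equation*}
where $A^{-1}:V^*\to V$ is the isomorphism dual to $A:V\to V^*$ (so $\|A^{-1}g\|_V = \|g\|_{V^*}$). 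Each term is then bounded in $V$ via $\|f(t)\|_{V^*}$, the trivial estimate $\|E(t)g\|_{V^*}\le C\|g\|_{V^*}$, and the convolution structure; integrating in $t$ and using $\|f(0)\|_{V^*}\lesssim \|f\|_{W^{1,1}(0,T;V^*)}$ gives $\|u_f\|_{L^1(0,T;V)}\lesssim \|f\|_{W^{1,1}(0,T;V^*)}$. Differentiating this representation (and repeating the same type of estimates, using $\|E'(t)g\|_{V^*}\lesssim t^{\alpha/2-1}\|g\|_{V^*}$ obtained as in the first step) yields the matching bound on $\|u_f'\|_{L^1(0,T;V^*)}$, completing \eqref{bhweasolest}.

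Third, verification of the weak formulation \eqref{bhvf} follows from the scalar identity $\partial_t^\alpha E_{\alpha,1}(-\mu_j t^\alpha) + \mu_j E_{\alpha,1}(-\mu_j t^\alpha) = 0$ (combined with the convolution semigroup property for the $F(t)$-part), which after testing against $\phi_j$ reduces \eqref{bhvf} to the Fourier-coefficient ODEs that are satisfied by construction; the initial condition $\|u(t)-u_0\|_{V^*}\to 0$ follows from dominated convergence applied term-by-term in the $V^*$-expansion, since $E_{\alpha,1}(-\mu_j t^\alpha)\to 1$ as $t\downarrow 0$ for each $j$ and $u_f(t)\to 0$ in $V^*$ from the representation above. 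Uniqueness is obtained by setting $u_0=0$, $f=0$, testing \eqref{bhvf} with $\phi_j$ to obtain the scalar Caputo problem $\partial_t^\alpha u_j + \mu_j u_j = 0$, $u_j(0)=0$ for the coefficients $u_j(t):=(u(t),\phi_j)$, whose unique $L^1(0,T)$-solution is $u_j\equiv 0$, forcing $u\equiv 0$. The main obstacle I anticipate is the derivative bound on $u_f'$: the kernel $F(t)$ carries a $t^{\alpha-1}$ singularity at the origin, which precludes naive termwise differentiation in $t$ of the convolution, and the $W^{1,1}$-regularity of $f$ is essential precisely to transfer this singularity onto a bounded operator acting on $f'$ via the integration-by-parts representation above.
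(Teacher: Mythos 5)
Your overall strategy coincides with the paper's: diagonalize in the eigenbasis, reduce to the scalar Caputo ODEs for $(u(t),\phi_j)$, and use the Mittag-Leffler bounds \eqref{bhmlestmte}--\eqref{bhmlderivties} to establish the $L^1(0,T;V)$ and $W^{1,1}(0,T;V^*)$ estimates; your homogeneous-part bounds $\|E(t)u_0\|_V\lesssim t^{-\alpha/2}\|u_0\|$ and $\|E'(t)u_0\|_{V^*}\lesssim t^{\alpha/2-1}\|u_0\|$ are exactly the paper's Steps 2--3, and your uniqueness and initial-condition arguments match Steps 1 and 6. Where you genuinely diverge is the inhomogeneous term: you integrate by parts in $s$ to obtain $u_f(t)=A^{-1}f(t)-A^{-1}E(t)f(0)-A^{-1}\int_0^t E(t-s)f'(s)\,ds$, whereas the paper keeps the convolution with $F$ as is, bounds $w$ in $L^1(0,T;V)$ via the exact identity \eqref{bhnonhomost_1}, and gets $w'$ from \eqref{bhnonhomofstdrvt} together with the stability Lemma \ref{bhnonhomosloptr_stbt} (with $p=q=-1$). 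Your representation is a clean alternative for the $L^1(0,T;V)$ bound, since $\|A^{-1}g\|_V=\|g\|_{V^*}$ and $E(t)$ is uniformly bounded on $V^*$.

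There is, however, one incorrect estimate in your treatment of $u_f'$. The bound $\|E'(t)g\|_{V^*}\lesssim t^{\alpha/2-1}\|g\|_{V^*}$ is false: testing with $g=\phi_j$ gives $\|E'(t)\phi_j\|_{V^*}/\|\phi_j\|_{V^*}=t^{-1}\,(\mu_jt^{\alpha})|E_{\alpha,\alpha}(-\mu_jt^{\alpha})|$, whose supremum over $j$ is of order $t^{-1}$, so the operator norm of $E'(t)$ on $V^*$ behaves like $t^{-1}$ and is not integrable. The $t^{\alpha/2-1}$ rate in your first step is only available because $u_0$ is measured in $L^2$, half an order smoother than $V^*$; it does not transfer to $f(0),f'(s)\in V^*$ "as in the first step." The repair is immediate and lands you back on the paper's argument: the composite operator you actually need satisfies $A^{-1}E'(t)g=-t^{\alpha-1}\sum_{j}\langle g,\phi_j\rangle E_{\alpha,\alpha}(-\mu_jt^{\alpha})\phi_j=-F(t)g$, so differentiating your representation gives $u_f'(t)=F(t)f(0)+\int_0^tF(t-s)f'(s)\,ds$, i.e.\ exactly \eqref{bhnonhomofstdrvt}, and Lemma \ref{bhnonhomosloptr_stbt} with $p=q=-1$ yields the integrable bound $\|F(t)g\|_{V^*}\lesssim t^{\alpha-1}\|g\|_{V^*}$. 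With that substitution (and Young's inequality for the convolution term) your proof of \eqref{bhweasolest} closes; the rest of the proposal is sound.
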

The following stability properties of the solution operator $F(t)$ of the nonhomogeneous problem are essential in the proof of Theorems \ref{bhwellposedness-thm} and \ref{bhnonhomop_reg}. For related results of second-order time-fractional differential equations, we refer to \cite[Lemma 2.2]{jlpz15}.
\begin{lemma}[Stability]\label{bhnonhomosloptr_stbt}
For $q\ge -1$, $t\in(0,T]$, and $v\in D(A^{q/2})$, we have
\begin{align*}
\norm[0]{F(t)v}_{D(A^{p/2})}\lesssim t^{-1+\big(1+\frac{(q-p)}{2}\big)\alpha}\norm[0]{v}_{D(A^{q/2})},\;\; 0\le p-q\le 4.
\end{align*}
\end{lemma}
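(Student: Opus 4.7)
The plan is to compute the $D(A^{p/2})$-norm of $F(t)v$ directly from its eigenfunction expansion in \eqref{bhsolnopertrs} and reduce the estimate to a scalar inequality on $|E_{\alpha,\alpha}(-x)|$ for $x = \mu_j t^\alpha \geq 0$. For any $v \in D(A^{q/2}) \subset V^*$, orthonormality of $\{\phi_j\}$ together with \eqref{eqn:norm} (extended via \eqref{bhdualspacenorm} when $q < 0$) gives
\begin{equation*}
\|F(t)v\|_{D(A^{p/2})}^2 = t^{2(\alpha-1)} \sum_{j=1}^{\infty} \mu_j^{p}\, |\langle v,\phi_j\rangle|^2 \, |E_{\alpha,\alpha}(-\mu_j t^\alpha)|^2.
\end{equation*}
Since $\|v\|_{D(A^{q/2})}^2 = \sum_j \mu_j^{q}|\langle v,\phi_j\rangle|^2$, the desired bound is equivalent to showing the uniform-in-$j$ estimate
\begin{equation*}
\mu_j^{p-q}\, t^{2(\alpha-1)}\, |E_{\alpha,\alpha}(-\mu_j t^\alpha)|^2 \;\lesssim\; t^{-2 + (2+q-p)\alpha},
\end{equation*}
which, after multiplying by $t^{-2(\alpha-1)}$ and rewriting $\mu_j^{p-q} = t^{-(p-q)\alpha} (\mu_j t^\alpha)^{p-q}$, collapses to
\begin{equation*}
(\mu_j t^\alpha)^{p-q}\, |E_{\alpha,\alpha}(-\mu_j t^\alpha)|^2 \;\lesssim\; 1.
\end{equation*}

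Setting $x := \mu_j t^\alpha \geq 0$, the task is therefore to show that $g(x) := x^{p-q}|E_{\alpha,\alpha}(-x)|^2$ is uniformly bounded on $[0,\infty)$. Since $\beta - \alpha = 0$ in the definition of $E_{\alpha,\alpha}$, the sharp Mittag-Leffler bound \eqref{bhmlestmte} yields
\begin{equation*}
|E_{\alpha,\alpha}(-x)| \;\lesssim\; \frac{1}{1+x^2}, \qquad x \geq 0,
\end{equation*}
so that $g(x) \lesssim x^{p-q}/(1+x^2)^2$. The behavior as $x \to 0^+$ requires $p-q \geq 0$, and the behavior as $x \to \infty$ requires $p-q \leq 4$; both conditions are exactly the hypothesis $0 \leq p-q \leq 4$. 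Summing the resulting term-by-term inequality over $j$ and taking square roots gives \eqref{bhnonhomosloptr_stbt}.

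The only subtle point — and what I would flag as the main thing to be careful about — is the range of $q$. The assumption $q \geq -1$ guarantees that $v \in D(A^{q/2})$ makes sense through the dual representation \eqref{bhdualspacenorm}, and that the expansion coefficients $\langle v,\phi_j\rangle$ are well-defined; one needs to check that the formal manipulations with duality pairings are legitimate (in particular that $\phi_j \in D(A^{|q|/2})$, which holds because $\phi_j$ is smooth enough as an eigenfunction). Beyond this bookkeeping, the proof is driven entirely by the decay estimate \eqref{bhmlestmte} applied to the exceptional case $\beta = \alpha$, which provides the crucial quadratic (rather than only linear) decay needed to absorb the full range $p-q \in [0,4]$.
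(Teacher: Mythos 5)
Your proof is correct and takes essentially the same route as the paper's: both expand $F(t)v$ in the eigenbasis, invoke the Mittag-Leffler decay estimate \eqref{bhmlestmte} in the case $\beta=\alpha$ to get quadratic decay, and reduce the claim to the elementary bound $(\mu_j t^{\alpha})^{p-q}/(1+(\mu_j t^{\alpha})^2)^2\le 1$ for $0\le p-q\le 4$. Your extra remark on the duality bookkeeping for $q<0$ is a sensible clarification but not a different argument.
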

\begin{proof}
In view of (\ref{bhsolnopertrs}), \eqref{eqn:norm} (or \eqref{bhdualspacenorm}), and (\ref{bhmlestmte}), we obtain
\begin{align*}
\norm[0]{F(t)v}_{D(A^{p/2})}^2&=\sum_{j=1}^{\infty}\mu_j^p t^{2\alpha-2}|E_{\alpha,\alpha}(-\mu_jt^{\alpha})|^2| \langle v,\phi_j \rangle|^2\lesssim t^{2\alpha-2} \sum_{j=1}^{\infty}\frac{\mu_j^p}{(1+(\mu_jt^{\alpha})^2)^2} | \langle v,\phi_j \rangle |^2\\
&= t^{2\alpha-2+(q-p)\alpha}\sum_{j=1}^{\infty}\frac{(\mu_jt^{\alpha})^{p-q}}{(1+(\mu_jt^{\alpha})^2)^2}\mu_j^q| \langle v,\phi_j \rangle |^2\le t^{2\alpha-2+(q-p)\alpha}\norm[0]{v}_{D(A^{q/2})}^2,  
\end{align*}
where we have used $\frac{(\mu_jt^{\alpha})^{p-q}}{(1+(\mu_jt^{\alpha})^2)^2}\le 1$ for $0\le p-q\le 4$ and $j\in \mathbb{N}$. This concludes the proof. 
\end{proof}

\noindent\textit{Proof of Theorem \ref{bhwellposedness-thm}.}   \textit{Step 1 (Representation for solution).}  If the function $u$ is a solution to (\ref{bhmaineqn}) in the sense of Definition \ref{weak-sol}, then for all $j\in {\mathbb N}$, $(u(\cdot),\phi_j)$ satisfies 
\begin{align}\label{bhnonhomosol_efn}
\begin{aligned}
\partial_t^{\alpha} \langle u(t),\phi_j\rangle +\mu_j(u(t),\phi_j) &= \langle f(t),\phi_j\rangle \text{ for all }  t\in(0,T],
\\  \lim_{t \rightarrow 0} (u(t),\phi_j)&=(u_0,\phi_j).
\end{aligned}
\end{align}
It is well-known that for all $j\in {\mathbb N}$ \cite[p. 242]{gorenflomainardi97} (see also \cite{podlubny, kilbasbook,sy11}),  \eqref{bhnonhomosol_efn} has a unique solution $(u(\cdot),\phi_j)\in C([0,T])$ defined by \begin{equation*}
(u(t),\phi_j) = (u_0,\phi_j)E_{\alpha,1}(-\mu_jt^{\alpha})+ \int_{0}^{t} \langle f(s),\phi_j\rangle (t-s)^{\alpha-1}E_{\alpha,\alpha}(-\mu_j(t-s)^{\alpha})\,ds.
\end{equation*} 
Let us set 
\begin{align*}
\displaystyle u(t)& =\sum_{j=1}^{\infty}(u(t),\phi_j)\phi_j(x),  \; \displaystyle v(t)=\sum_{j=1}^{\infty}(u_0,\phi_j)E_{\alpha,1}(-\mu_jt^{\alpha})\phi_j(x), \\
\text{and } \displaystyle w(t)& =\sum_{j=1}^{\infty}\Big(\int_{0}^{t}\langle f(s),\phi_j\rangle (t-s)^{\alpha-1}E_{\alpha,\alpha}(-\mu_j(t-s)^{\alpha})\,ds\Big)\phi_j(x).
\end{align*}
In the next steps, we show that $u(t)=v(t)+w(t)$ is indeed a weak solution. 

\noindent\textit{Step 2 (Proofs of $v,w\in L^1(0,T;V)$).}   From \eqref{deftn:power}, the definition of $v(t)$,  and the second inequality in (\ref{bhmlestmte}) we obtain
\begin{align*}
\norm[0]{A^{1/2}v(t)}^2=\sum_{j=1}^{\infty}|\mu_j^{1/2}(u_0,\phi_j)E_{\alpha,1}(-\mu_jt^{\alpha})|^{2}\lesssim t^{-\alpha}\sum_{j=1}^{\infty}|(u_0,\phi_j)|^2\frac{\mu_jt^{\alpha}}{(1+\mu_jt^{\alpha})^2}\le t^{-\alpha}\norm[0]{u_0}^2,
\end{align*}
with $\frac{\mu_jt^{\alpha}}{(1+\mu_jt^{\alpha})^2} \le 1$ and \eqref{eqn:norm} in the last step above. This and $D(A^{1/2})=V$ show that $\norm[0]{v(t)}_V\lesssim t^{-\alpha/2}\| u_0\|$ and hence $v\in L^1(0,T;V)$. 

\noindent Next, to show $w\in L^1(0,T;V)$, we first invoke the last inequality and then the first identity in (\ref{bhmlderivties}) to arrive at
\begin{align}\label{bhnonhomost_1}
\begin{aligned}
\int_{0}^{\eta}|t^{\alpha-1}E_{\alpha,\alpha}(-\mu_jt^{\alpha})|\,dt &= \int_{0}^{\eta}t^{\alpha-1}E_{\alpha,\alpha}(-\mu_jt^{\alpha})\,dt=-\frac{1}{\mu_j}\int_{0}^{\eta}\frac{d}{dt}E_{\alpha,1}(-\mu_jt^{\alpha})\,dt\\
&= \frac{1}{\mu_j}(1-E_{\alpha,1}(-\mu_j\eta^{\alpha})),\;\; \eta >0.
\end{aligned}
\end{align} 
Since $W^{1,1}(0,T;V^*)\subset L^{\infty}(0,T; V^*)$,   \eqref{deftn:power}, the definition of $w(t)$, and  \eqref{bhnonhomost_1} show
\begin{align*}
\begin{aligned}
&\norm[0]{A^{1/2}w(t)}^2=\sum_{j=1}^{\infty}\Big|\int_{0}^{t}\langle f(s),\phi_j\rangle\mu_j^{1/2}(t-s)^{\alpha-1}E_{\alpha,\alpha}(-\mu_j(t-s)^{\alpha})\,ds\Big|^2\\
&\lesssim
\sum_{j=1}^{\infty}\frac{1}{\mu_j}\sup_{0\le \tau\le T}|\langle f(\tau),\phi_j\rangle|^2\Big|\int_{0}^{t}\mu_j(t-s)^{\alpha-1}E_{\alpha,\alpha}(-\mu_j(t-s)^{\alpha})\,ds\Big|^2\\
&= \sum_{j=1}^{\infty}\frac{1}{\mu_j}\sup_{0\le \tau\le T}|\langle f(\tau),\phi_j\rangle|^2\big(1-E_{\alpha,1}(-\mu_j(t-s)^{\alpha})\big)^2 \lesssim \|f\|_{L^{\infty}(0,T;V^*)}^2,
\end{aligned}
\end{align*}
with the boundedness of $E_{\alpha,1}(\cdot)$ (cf. \eqref{bhmlestmte}) in the last step. This shows $w\in L^1(0,T; V)$.

\noindent\textit{Step 3 (Proof of $v\in W^{1,1}(0,T;V^{*})$).}  The first equality in (\ref{bhmlderivties}) leads to
\begin{align*}
v'(t)= - t^{\alpha-1}\sum_{j=1}^{\infty}\mu_j(u_0,\phi_j)E_{\alpha,\alpha}(-\mu_jt^{\alpha})\phi_j(x). 
\end{align*}
This with (\ref{bhdualspacenorm}) and (\ref{bhmlestmte}) show
\begin{align*}
\|v'(t)\|_{V^{*}}^2 =\sum_{j=1}^{\infty}\frac{t^{2\alpha-2}}{\mu_j}\mu_j^2|E_{\alpha,\alpha}(-\mu_jt^{\alpha})|^2|(u_0,\phi_j)|^2\lesssim t^{\alpha-2}\sum_{j=1}^{\infty}  \frac{\mu_jt^{\alpha}}{(1+(\mu_jt^{\alpha})^2)^2}|(u_0,\phi_j)|^2. 
\end{align*}
Since $\frac{\mu_jt^{\alpha}}{(1+(\mu_jt^{\alpha})^2)} \le 1$, the last displayed inequality and  \eqref{eqn:norm} reveal $\|v'(t)\|_{V^*}\lesssim t^{\alpha/2-1}\|u_0\|$, and hence $v\in W^{1,1}(0,T;V^{*})$.

\noindent {\it Step 4 (Proof of $\displaystyle w(t)=\int_{0}^t F(t-s) f(s) \: ds $)}. 
Set $$S_N(t,s)=\sum_{j=1}^{N}\langle f(s),\phi_j\rangle (t-s)^{\alpha-1}E_{\alpha,\alpha}(-\mu_j(t-s)^{\alpha})\phi_j(x), \; t>s.$$ 
An application of Lemma \ref{bhnonhomosloptr_stbt} with $p=-1=q$ yields  
\begin{align*}
&\|S_N(t,s)\|_{V^*}^2= \sum_{j=1}^{N}\frac{1}{\mu_j}|\langle f(s),\phi_j\rangle (t-s)^{\alpha-1}E_{\alpha,\alpha}(-\mu_j(t-s)^{\alpha})|^2\lesssim  (t-s)^{2\alpha-2}\|f\|_{L^{\infty}(0,T;V^*)}^2,
\end{align*}
which implies $\|S_N(t,s)\|_{V^*}\lesssim (t-s)^{\alpha-1}\|f\|_{L^{\infty}(0,T;V^*)}$ for all $N\in \mathbb{N}$. In view of the Lebesgue-B{\"o}chner dominated convergence theorem, we obtain
\begin{align*}
\lim_{N\rightarrow\infty}\int_{0}^{t}S_N(t,s)\,ds= \int_{0}^{t}\lim_{N\rightarrow\infty}S_N(t,s)\,ds,
\end{align*}
and this concludes the proof.

\noindent {\it Step 5 (Proof of $w\in W^{1,1}(0,T;V^{*})$)}.
A differentiation of $w$ with respect to $t$ yields 
\begin{align}\label{bhnonhomofstdrvt}
\begin{aligned}
w'(t)=\frac{d}{dt}\Big(\int_{0}^{t}F(s)f(t-s)\,ds\Big)&=\int_{0}^{t}F(s)f'(t-s)\,ds+F(t)f(0).
\end{aligned}
\end{align} 
That is,
\begin{align*}
\begin{aligned}
\|w'(t)\|_{V^*}&\le \int_{0}^{t}\|F(s)f'(t-s)\|_{V^*}\,ds+\|F(t)f(0)\|_{V^*}\\
&\lesssim \int_{0}^{t}s^{\alpha-1}\|f'(t-s)\|_{V^*}\,ds+t^{\alpha-1}\|f(0)\|_{V^*},
\end{aligned}   
\end{align*}
with a use of Lemma \ref{bhnonhomosloptr_stbt} for $p=-1=q$ in the last step above.
Since $f\in W^{1,1}(0,T;V^{*})$, from Young's inequality for the convolution product to bound the first term followed by an application of $t\le T$ for both the terms, we obtain 
\begin{align*}
\int_{0}^{T}\|w'(t)\|_{V^*}\,dt \lesssim  \|f'\|_{L^1(0,T;V^*)}+\|f(0)\|_{V^*}.  
\end{align*}
This establishes $w\in W^{1,1}(0,T;V^{*})$. 

\noindent\textit{Step 6 (Proof of $\lim_{t\rightarrow 0}\norm[0]{u(t)-u_0}_{V^*}=0$).}  Since $w\in W^{1,1}(0,T;V^{*})$,  $w$ belongs to $C([0,T];V^{*})$. Moreover, $w(0)=0$. Therefore, 
\begin{align*}
\lim_{t\rightarrow 0}\|w(t)\|_{V^*}=0.    
\end{align*}
Next, we show that   
\begin{align}\label{bhhomosolzero}
\lim_{t\rightarrow 0}\norm[0]{v(t)-u_0}=0.
\end{align}
Employ the definition of  $v(t)$ and \eqref{deftn:power} for $r=0$ to obtain
\begin{align*}
\norm[0]{v(t)-u_0}^2=\sum_{j=1}^{\infty}|(u_0,\phi_j)|^2\big(E_{\alpha,1}(-\mu_jt^{\alpha})-1\big)^{2}. 
\end{align*}
Also, for each $j\in \mathbb{N}$, $\lim_{t\rightarrow 0}\big(E_{\alpha,1}(-\mu_jt^{\alpha})-1\big)=0$.  Moreover, \eqref{bhmlestmte} applied to the first inequality below shows, for $0\le t\le T$,
\begin{align}\label{bhhomosolzero_1}
\sum_{j=1}^{\infty}|(u_0,\phi_j)|^2\big(E_{\alpha,1}(-\mu_jt^{\alpha})-1\big)^{2}\lesssim \sum_{j=1}^{\infty}\Big( 1+\frac{1}{(1+\mu_jt^{\alpha})^2}\Big)|(u_0,\phi_j)|^2\le 2 \|u_0\|^2<\infty.
\end{align}
Using \eqref{bhmlestmte}, we have 
\begin{align*}
|(u_0,\phi_j)|^2\big(E_{\alpha,1}(-\mu_jt^{\alpha})-1\big)^{2}\lesssim |(u_0,\phi_j)|^2 \text{ for all } j\in \mathbb{N}.  
\end{align*}
Since $u_0\in L^2(\Omega)$, the series on the left-hand side of \eqref{bhhomosolzero_1} is normally convergent, and so  \eqref{bhhomosolzero} is proved.

\noindent Since $u=v+w$, a combination of the limits obtained for $v$ and $w$ leads to
\begin{align*}
\lim_{t\rightarrow 0}\|u(t)-u_0\|_{V^*}\le \lim_{t\rightarrow 0}\|v(t)-u_0\|_{V^*}+\lim_{t\rightarrow 0}\|w(t)\|_{V^*}=0.    
\end{align*}
Finally, utilizing \eqref{bhctssoln} and \eqref{bhweasolest}, we can show \eqref{bhvf}.  Thus, we have shown that $u$ defined by \eqref{bhctssoln} satisfies all the conditions of a weak solution.   \hfill{$\Box$}
\subsection{Regularity results}\label{bhregularityresults}
The following regularity results for the solutions of both homogeneous and nonhomogeneous problems are employed in the error analysis. {The proofs of these results are based on the solution representation in \eqref{bhctssoln}, the properties of Mittag-Leffler functions in  \eqref{bhmlestmte} and \eqref{bhmlderivties}, and Lemma \ref{bhnonhomosloptr_stbt}  }.
\begin{theorem}[Regularity for homogeneous case]\label{bhhomop_reg}
Let $u$ be the solution of \eqref{bhmaineqn} with $f=0$. Then, for $t\in(0,T]$, the following results hold true:
\begin{itemize}
\item [(i)] (Nonsmooth initial data) For $u_0\in L^2(\Omega)$, 
we have
\begin{align}
\norm[0]{u^{(i)}(t)}_{D(A^p)}&\lesssim  t^{-(i+\alpha p)}\norm[0]{u_0} \text{ for }  i \in \{ 0,1 \}, 0\le p\le 1, \label{bhhomopfn_func_ns}\\
\norm[0]{u''(t)}&\lesssim 
t^{-2}\norm[0]{u_0}, \label{bhhomopfn_ddtve_ns}\\
\|\partial_t^{\alpha}u(t)\|&\lesssim t^{-\alpha}\|u_0\|. \label{caputo_nsest}
\end{align}
In addition, $\|u\|_{C([0,T];L^2(\Omega))}\lesssim \|u_0\|$.
\item [(ii)] (Smooth initial data) For $u_0\in D(A)$, 
\begin{align}
\norm[0]{u(t)}_{D(A)}&\lesssim \norm[0]{u_0}_{D(A)}  \label{bhhomopfn_func_s}\\
\norm[0]{u'(t)}_{D(A^p)}&\lesssim  t^{-(1+\alpha(p-1))}\norm[0]{u_0}_{D(A)}\text{ for }  0\le p\le 1,  \label{bhhomopfn_dtve_s}\\
\norm[0]{u''(t)}&\lesssim 
t^{\alpha-2}\norm[0]{u_0}_{D(A)}, \label{bhhomopfn_ddtve_s}\\
\|\partial_t^{\alpha}u(t)\|& \lesssim  \|u_0\|_{D(A)}. \label{caputo_sest}
\end{align}
Furthermore,  $\|u\|_{C([0,T];D(A))}\lesssim \|u_0\|_{D(A)}$.
\end{itemize}
\end{theorem}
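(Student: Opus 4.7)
My plan is to derive every bound by computing the $D(A^p)$ (or $L^2$) norm of the spectral series \eqref{bhctssoln} (with $f=0$) and its time derivatives, and then applying the two cases of \eqref{bhmlestmte} together with the identity \eqref{bhmlderivties}. Concretely, from $f=0$ and the orthonormality of $\{\phi_j\}$, for any $p\geq 0$,
\begin{equation*}
\norm[0]{u^{(k)}(t)}_{D(A^p)}^{2}=\sum_{j=1}^{\infty}\mu_j^{2p}\bigl|\tfrac{d^k}{dt^k}E_{\alpha,1}(-\mu_j t^\alpha)\bigr|^{2}\,|(u_0,\phi_j)|^{2},
\end{equation*}
and by the first identity in \eqref{bhmlderivties} the factor $\frac{d^k}{dt^k}E_{\alpha,1}(-\mu_j t^\alpha)$ equals $-\mu_j t^{\alpha-k}E_{\alpha,\alpha-k+1}(-\mu_j t^\alpha)$ for $k\in\{1,2\}$. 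Since $(\beta-\alpha)\in\mathbb Z^-\cup\{0\}$ when $\beta=\alpha$ or $\beta=\alpha-1$, the sharper $O(1/(1+|z|^{2}))$ decay from \eqref{bhmlestmte} is available for both $E_{\alpha,\alpha}$ and $E_{\alpha,\alpha-1}$, whereas for $E_{\alpha,1}$ only the weaker $O(1/(1+|z|))$ bound is available.

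\textbf{Part (i) (nonsmooth data).} For $k=0$ I write $\mu_j^{p}|E_{\alpha,1}(-\mu_j t^\alpha)|\lesssim t^{-\alpha p}(\mu_j t^\alpha)^{p}/(1+\mu_j t^\alpha)$, which is bounded by $t^{-\alpha p}$ for $0\leq p\leq 1$, giving \eqref{bhhomopfn_func_ns} with $i=0$. For $k=1$ the extra $\mu_j t^{\alpha-1}$ together with $|E_{\alpha,\alpha}|\lesssim 1/(1+(\mu_j t^\alpha)^2)$ contributes $\mu_j^{1+p} t^{\alpha-1}|E_{\alpha,\alpha}|\lesssim t^{-(1+\alpha p)}\cdot(\mu_j t^\alpha)^{1+p}/(1+(\mu_j t^\alpha)^2)$, bounded for $0\le p\le 1$; this yields \eqref{bhhomopfn_func_ns} with $i=1$. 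For $k=2$ I use $|E_{\alpha,\alpha-1}|\lesssim 1/(1+(\mu_j t^\alpha)^2)$ and the same manipulation with $p=0$ to obtain \eqref{bhhomopfn_ddtve_ns}. The bound \eqref{caputo_nsest} on $\partial_t^{\alpha}u(t)$ follows directly from the PDE, $\partial_t^{\alpha}u=-Au$, combined with the $p=1$ case of \eqref{bhhomopfn_func_ns}. For the final statement $\|u\|_{C([0,T];L^2(\Omega))}\lesssim\|u_0\|$, the uniform bound $|E_{\alpha,1}(-\mu_j t^\alpha)|\lesssim 1$ controls the tail of the series independently of $t$, so the sum converges normally on $[0,T]$ and is continuous in $t$; continuity at $t=0$ was already established in Step~6 of the proof of Theorem~\ref{bhwellposedness-thm}.

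\textbf{Part (ii) (smooth data).} The key idea is to transfer one power of $\mu_j$ from the Mittag-Leffler factor to the coefficient, reinterpreting $\mu_j^{2}|(u_0,\phi_j)|^{2}$ as the summand of $\|u_0\|_{D(A)}^{2}$. Thus $\norm[0]{u(t)}_{D(A)}^{2}=\sum_j\mu_j^{2}|E_{\alpha,1}(-\mu_j t^\alpha)|^2|(u_0,\phi_j)|^2\lesssim\|u_0\|_{D(A)}^{2}$ using $|E_{\alpha,1}|\lesssim 1$, giving \eqref{bhhomopfn_func_s}. For \eqref{bhhomopfn_dtve_s} I write $\mu_j^{p} t^{\alpha-1}|E_{\alpha,\alpha}|\lesssim t^{-1-\alpha(p-1)}\cdot(\mu_j t^\alpha)^{p}/(1+(\mu_j t^\alpha)^2)$ (bounded for $0\le p\le 1$) and pair the remaining $\mu_j^{2}$ with $|(u_0,\phi_j)|^{2}$. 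For \eqref{bhhomopfn_ddtve_s} the factor $\mu_j t^{\alpha-2}|E_{\alpha,\alpha-1}|\lesssim t^{\alpha-2}$ combined with one spare $\mu_j^{2}$ on the coefficient side gives the claimed $t^{\alpha-2}$ rate. The Caputo bound \eqref{caputo_sest} again follows from the equation together with \eqref{bhhomopfn_func_s}. The $C([0,T];D(A))$ statement follows by dominated/normal convergence: each $E_{\alpha,1}(-\mu_j t^\alpha)$ is continuous (indeed entire) in $t$, the $j$-th term is bounded uniformly in $t$ by $\mu_j^{2}|(u_0,\phi_j)|^{2}$, whose sum is finite.

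The only genuinely delicate point is the compatibility of the exponent regions in (i): bounding $\mu_j^{1+p}/(1+(\mu_j t^\alpha)^2)$ by $t^{-\alpha(1+p)}$ uses $0\le 1+p\le 2$, hence the restriction $p\le 1$ in \eqref{bhhomopfn_func_ns}. I also want to be careful not to invoke the weaker $1/(1+|z|)$ decay for $E_{\alpha,\alpha}$ or $E_{\alpha,\alpha-1}$, since the sharper $1/(1+|z|^2)$ estimate from \eqref{bhmlestmte} is precisely what makes the second-derivative bounds integrable near $t=0$; once that is kept in mind the remainder of the argument is a bookkeeping exercise in matching powers of $t$.
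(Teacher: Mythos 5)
Your proof is correct and follows essentially the same route as the paper's: the spectral representation \eqref{bhctssoln} with $f=0$, the derivative identity \eqref{bhmlderivties}, and the decay estimates \eqref{bhmlestmte}. The one genuine (if small) difference is that the paper verifies only the endpoint cases $p=0$ and $p=1$ of \eqref{bhhomopfn_func_ns} and \eqref{bhhomopfn_dtve_s} and then interpolates between $L^2(\Omega)=D(A^0)$ and $D(A)$, whereas you handle every $p\in[0,1]$ in one stroke via the elementary inequality $z^{q}/(1+z^{2})\le 1$ for $z\ge 0$, $0\le q\le 2$, applied with $z=\mu_j t^{\alpha}$; this makes the argument slightly more self-contained since it never invokes the interpolation identification $D(A^r)=[L^2(\Omega),D(A)]_r$. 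One sentence in Part (ii) is misstated: the claim ``$\mu_j t^{\alpha-2}|E_{\alpha,\alpha-1}(-\mu_j t^{\alpha})|\lesssim t^{\alpha-2}$'' is false uniformly in $j$ (absorbing $\mu_j$ into the Mittag--Leffler decay only yields $t^{-2}$, which is the nonsmooth rate). However, the strategy you announce at the start of Part (ii) — keep the factor $\mu_j$ with the coefficient $(u_0,\phi_j)$ and use only the boundedness of $E_{\alpha,\alpha-1}$, so that $\|u''(t)\|^2=t^{2\alpha-4}\sum_{j}|E_{\alpha,\alpha-1}(-\mu_j t^{\alpha})|^2\,\mu_j^{2}|(u_0,\phi_j)|^{2}\lesssim t^{2\alpha-4}\|u_0\|_{D(A)}^{2}$ — is exactly right, so this is a slip of phrasing rather than a gap in the argument.
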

\noindent{\it{ Proof of case (i).}}
In view of \eqref{deftn:power}, (\ref{bhctssoln}) for $f=0$, and  (\ref{bhmlestmte}) it is easy to see that for $p=0,1$,
\begin{align*}
\norm[0]{A^{p}u(t)}^2=\sum_{j=1}^{\infty}|\mu_j^{p}(u_0,\phi_j)E_{\alpha,1}(-\mu_jt^{\alpha})|^{2}\lesssim \sum_{j=1}^{\infty}|(u_0,\phi_j)|^2\frac{\mu_j^{2p}}{(1+\mu_jt^{\alpha})^2}\le t^{-2p\alpha}\norm[0]{u_0}^2.
\end{align*}
Thus for $i=0$, \eqref{bhhomopfn_func_ns} follows  for  $0\le p\le 1$ by means of interpolation.

\noindent The first identity in (\ref{bhmlderivties}) with $k=1$ and (\ref{bhctssoln}) with $f=0$ lead to 
\begin{align*}
u'(t)= - t^{\alpha-1}\sum_{j=1}^{\infty}\mu_j(u_0,\phi_j)E_{\alpha,\alpha}(-\mu_jt^{\alpha})\phi_j(x). 
\end{align*}
In the next two bounds we utilize $\frac{(\mu_jt^{\alpha})^{i-l}}{(1+(\mu_jt^{\alpha})^2)^2}\le 1$ for $0\le i-l\le 4$ and $j\in \mathbb{N}$. The estimate in (\ref{bhmlestmte}) and the last displayed equality lead to 
\begin{align*}
\norm[0]{u'(t)}^2&
\lesssim 
t^{-2}\sum_{j=1}^{\infty}|(u_0,\phi_j)|^2\frac{(\mu_jt^{\alpha})^2}{(1+(\mu_jt^{\alpha})^2)^2}
\le
t^{-2}\sum_{j=1}^{\infty}(u_0,\phi_j)^2=t^{-2}\norm[0]{u_0}^2.
\end{align*}

\noindent Analogous arguments  for the case $i=p=1$ in \eqref{bhhomopfn_func_ns} reveal
\begin{align*}
\norm[0]{Au'(t)}^2&\lesssim t^{2\alpha-2}\sum_{j=1}^{\infty}|(u_0,\phi_j)|^2 \frac{\mu_j^4}{(1+(\mu_jt^{\alpha})^2)^2}= 
t^{-2(1+\alpha)}\sum_{j=1}^{\infty}|(u_0,\phi_j)|^2\frac{(\mu_jt^{\alpha})^4}{(1+(\mu_jt^{\alpha})^2)^2}\\
&\le t^{-2(1+\alpha)}\sum_{j=1}^{\infty}|(u_0,\phi_j)|^2=t^{-2(1+\alpha)}\norm[0]{u_0}^2.
\end{align*}
 The above displayed estimates show $\|u'(t)\|\lesssim t^{-1}\|u_0\|$ and $\norm[0]{Au'(t)} \lesssim
 t^{-(1+\alpha)}\norm[0]{u_0}$. An interpolation argument  completes the proof of \eqref{bhhomopfn_func_ns}.

\medskip \noindent  The first identity in (\ref{bhmlderivties}) with $k=2$  and \eqref{bhctssoln} with $f=0$ show 
\begin{align*}
u''(t)= - t^{\alpha-2}\sum_{j=1}^{\infty}\mu_j(u_0,\phi_j)E_{\alpha,\alpha-1}(-\mu_jt^{\alpha})\phi_j(x). 
\end{align*}
Now, proceed as in the case for $i=1, p=0$ in \eqref{bhhomopfn_func_ns} to obtain  (\ref{bhhomopfn_ddtve_ns}). To prove \eqref{caputo_nsest}, 
 use   \eqref{bhmaineqn} and \eqref{bhhomopfn_func_ns} with $i=0, p=1$ to conclude 
\begin{align*}
\|\partial_t^{\alpha}u(t)\|\le \|Au(t)\|\lesssim t^{-\alpha}\|u_0\|.    
\end{align*}

\noindent Proceeding as in \textit{Step 6} in the proof of Theorem \ref{bhwellposedness-thm}, we can show that $u\in C([0,T];L^2(\Omega))$, and this completes the proof of {$(i)$}.  \qed

\noindent{{\it Proof of  (ii).}} Analogous arguments as in the proof of $(i)$, but now with $u_0\in D(A)$, lead to 
\begin{align*}
&\|u(t)\|^2_{D(A)}=\sum_{j=1}^{\infty}|\mu_j(u_0,\phi_j)E_{\alpha,1}(-\mu_jt^{\alpha})|^{2}\lesssim \sum_{j=1}^{\infty}\mu_j^{2}|(u_0,\phi_j)|^2\frac{1}{(1+\mu_jt^{\alpha})^2}\le \norm[0]{u_0}_{D(A)}^2, \\
& \norm[0]{u'(t)}^2 \lesssim t^{2\alpha-2}\sum_{j=1}^{\infty}\mu_j^2|(u_0,\phi_j)|^2 \frac{1}{(1+(\mu_jt^{\alpha})^2)^2}\le t^{2\alpha-2}\norm[0]{u_0}_{D(A)}^2, \\
\text{ and} & \norm[0]{Au'(t)}^2\lesssim t^{-2}\sum_{j=1}^{\infty}\mu_j^2|(u_0,\phi_j)|^2 \frac{(\mu_jt^{\alpha})^2}{(1+(\mu_jt^{\alpha})^2)^2}
\le t^{-2}\norm[0]{u_0}_{D(A)}^2.
\end{align*}
The last three displayed inequalities and an interpolation conclude the proof of \eqref{bhhomopfn_func_s}-\eqref{bhhomopfn_dtve_s}. 
The proof of \eqref{bhhomopfn_ddtve_s} is similar to that of \eqref{bhhomopfn_ddtve_ns} and is skipped. 
The proof of \eqref{caputo_sest} is straightforward from \eqref{bhmaineqn} and \eqref{bhhomopfn_func_s}. 
Following the \textit{Step 6} of Theorem \ref{bhwellposedness-thm}, we can show that $u\in C([0,T];D(A))$, and this completes the proof of  $(ii)$. \hfill{$\Box$}
\begin{remark}
In view of the estimates in Theorem \ref{bhhomop_reg} $(i)$ and $(ii)$, by means of interpolation we obtain for time $t\in(0,T]$,
\begin{align*}
\|u(t)\|_{D(A^p)}&\lesssim t^{-\alpha(p-q)}\|u_0\|_{D(A^q)} \text{ for } 0\le q\le p\le 1,  \\
\|u'(t)\|_{D(A^p)}&\lesssim t^{-(1+\alpha(p-q))}\|u_0\|_{D(A^q)}  \text{ for } 0\le p, q\le 1,\\
\|u''(t)\|&\lesssim t^{q\alpha-2}\|u_0\|_{D(A^q)} \text{ for } 0\le q\le 1, \text{ and},\\
\|\partial_t^{\alpha}u(t)\|&\lesssim t^{-(1-q)\alpha}\|u_0\|_{D(A^q)} \text{ for } 0\le q\le 1.
\end{align*}
\end{remark}
\begin{theorem}[Regularity for nonhomogeneous case with zero initial data]\label{bhnonhomop_reg}
Let $u$ be the solution of (\ref{bhmaineqn}) with $u_0=0$ and $f\in W^{1,\infty}([0,T];D(A^{q/2}))\cap W^{2,1}(0,T;L^2(\Omega))$, $q\in[-1,1]$. Then, for all $\epsilon\in (0,1)$, we have 
\begin{align}
& \norm[0]{u(t)}_{D(A^{q/2+1-\epsilon/2})} \lesssim \epsilon^{-1}t^{\alpha\epsilon/2}\norm[0]{f}_{L^{\infty}(0,t;D(A^{q/2}))},\;\; t\in[0,T], \label{bhnonhomo_fnest}\\
& \norm[0]{u'(t)}_{D(A^{q/2+1-\epsilon/2})}\lesssim  \epsilon^{-1}t^{\alpha\epsilon/2}\norm[0]{f'}_{L^{\infty}(0,t;D(A^{q/2}))}+ t^{-1+\alpha\epsilon/2}\norm[0]{f(0)}_{D(A^{q/2})}, \;\; t\in(0,T],\label{bhnonhomo_dtve}\\
& \norm[0]{u''(t)}\lesssim \mathcal{I}^{\alpha}\norm[0]{f''(\cdot)}(t)+t^{\alpha-1}\norm[0]{f'(0)}+t^{\alpha-2}\norm[0]{f(0)},\;\; t\in(0,T], \label{bhnonhomo_ddtve}\\
&\|\partial_t^{\alpha}u(t)\|\lesssim \epsilon^{-1}t^{\alpha\epsilon/2}\norm[0]{f}_{L^{\infty}(0,t;D(A^{\epsilon/2}))}+\|f(t)\|, \;\; t\in[0,T]. \label{caputo_estnhprblm}
\end{align}
\end{theorem}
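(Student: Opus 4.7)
\textbf{Proof proposal for Theorem \ref{bhnonhomop_reg}.}

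The plan is to exploit the representation $u(t)=\int_0^t F(t-s)f(s)\,ds$ (with $u_0=0$) together with the stability property of $F(t)$ from Lemma~\ref{bhnonhomosloptr_stbt}, handling higher time derivatives by differentiation under the integral. The four estimates are proved independently.

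For \eqref{bhnonhomo_fnest}, I would apply Lemma~\ref{bhnonhomosloptr_stbt} pointwise with the exponents $p=q+2-\epsilon$ (so $p-q=2-\epsilon\in[0,4]$) to obtain
$\|F(t-s)f(s)\|_{D(A^{q/2+1-\epsilon/2})}\lesssim (t-s)^{-1+\alpha\epsilon/2}\|f(s)\|_{D(A^{q/2})},$
and integrate on $(0,t)$; the factor $\int_0^t(t-s)^{-1+\alpha\epsilon/2}ds=(\alpha\epsilon/2)^{-1}t^{\alpha\epsilon/2}$ yields the claimed $\epsilon^{-1}t^{\alpha\epsilon/2}$ behavior. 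For \eqref{bhnonhomo_dtve}, I would reuse the differentiation formula \eqref{bhnonhomofstdrvt} from Step~5 of Theorem~\ref{bhwellposedness-thm}, namely $u'(t)=\int_0^t F(s)f'(t-s)\,ds+F(t)f(0)$, and apply the same Lemma~\ref{bhnonhomosloptr_stbt} estimate with $p-q=2-\epsilon$ to each of the two pieces; the convolution term is bounded by $\epsilon^{-1}t^{\alpha\epsilon/2}\|f'\|_{L^\infty(0,t;D(A^{q/2}))}$ and the boundary contribution contributes $t^{-1+\alpha\epsilon/2}\|f(0)\|_{D(A^{q/2})}$.

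For \eqref{bhnonhomo_ddtve}, I would differentiate once more to obtain
\begin{align*}
u''(t)=F'(t)f(0)+F(t)f'(0)+\int_0^t F(s)f''(t-s)\,ds.
\end{align*}
The term $F(t)f'(0)$ is bounded by $t^{\alpha-1}\|f'(0)\|$ using Lemma~\ref{bhnonhomosloptr_stbt} with $p=q=0$, and the convolution term is controlled by $\int_0^t s^{\alpha-1}\|f''(t-s)\|\,ds=\Gamma(\alpha)\,\mathcal{I}^\alpha\|f''(\cdot)\|(t)$. The main obstacle is the first term $F'(t)f(0)$, which I expect to be the critical step: using the termwise identity $\frac{d}{dt}[t^{\alpha-1}E_{\alpha,\alpha}(-\mu_j t^\alpha)]=t^{\alpha-2}E_{\alpha,\alpha-1}(-\mu_j t^\alpha)$ (verified from the series expansion of $E_{\alpha,\beta}$), one has $F'(t)v=\sum_j t^{\alpha-2}E_{\alpha,\alpha-1}(-\mu_j t^\alpha)\langle v,\phi_j\rangle\phi_j$. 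Since $\beta-\alpha=-1\in\mathbb{Z}^-$ with $\beta=\alpha-1$, the sharper bound in \eqref{bhmlestmte} gives $|E_{\alpha,\alpha-1}(-\mu_j t^\alpha)|\lesssim (1+(\mu_j t^\alpha)^2)^{-1}$, which yields $\|F'(t)v\|\lesssim t^{\alpha-2}\|v\|$ and therefore the $t^{\alpha-2}\|f(0)\|$ contribution.

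Finally, for \eqref{caputo_estnhprblm} I would simply invoke the equation $\partial_t^\alpha u(t)=f(t)-Au(t)$, yielding $\|\partial_t^\alpha u(t)\|\le\|f(t)\|+\|Au(t)\|$, and apply \eqref{bhnonhomo_fnest} with $q=\epsilon$ (so that $q/2+1-\epsilon/2=1$) to bound $\|Au(t)\|=\|u(t)\|_{D(A)}\lesssim \epsilon^{-1}t^{\alpha\epsilon/2}\|f\|_{L^\infty(0,t;D(A^{\epsilon/2}))}$. The chief technical work, as noted, lies in rigorously justifying the differentiation of $F(t)$ in \eqref{bhnonhomo_ddtve} and picking the correct Mittag-Leffler decay regime in \eqref{bhmlestmte}.
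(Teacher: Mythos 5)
Your proposal is correct and follows essentially the same route as the paper: the representation $u(t)=\int_0^tF(t-s)f(s)\,ds$ combined with Lemma \ref{bhnonhomosloptr_stbt} at exponents $p=q+2-\epsilon$ (resp.\ $p=q=0$), the differentiation formulas \eqref{bhnonhomofstdrvt} and \eqref{bhnonhomo2nddrvt}, and the equation itself for \eqref{caputo_estnhprblm}. Your explicit treatment of $F'(t)f(0)$ via the series identity and the sharper Mittag-Leffler decay for $\beta-\alpha=-1$ is exactly the computation the paper invokes by reference to the case $i=1$, $p=0$ of \eqref{bhhomopfn_func_ns}.
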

\noindent{\it{Proof of (\ref{bhnonhomo_fnest}).}}
The representation \eqref{bhctssoln} for $u_0=0$ and  Lemma \ref{bhnonhomosloptr_stbt} establish
\begin{align*}
& \norm[0]{u(t)}_{D(A^{q/2+1-\epsilon/2})}\le \int_{0}^{t}\norm{F(t-s)f(s)}_{D(A^{q/2+1-\epsilon/2})}\,ds\\
&\qquad \qquad \lesssim \int_{0}^{t}(t-s)^{-1+\alpha \epsilon/2}\norm[0]{f(s)}_{D(A^{q/2})}\,ds\lesssim \epsilon^{-1}t^{\alpha\epsilon/2}\norm[0]{f}_{L^{\infty}(0,t;D(A^{q/2}))}.
\end{align*}
This concludes the proof of (\ref{bhnonhomo_fnest}).

\noindent{\it{Proof of (\ref{bhnonhomo_dtve}).}}
Apply Lemma \ref{bhnonhomosloptr_stbt} with $p=q+2-\epsilon$, $-1\le q\le 1$ in (\ref{bhnonhomofstdrvt}) to yield, for positive time $t$,
\begin{align*}
\begin{aligned}
\norm[0]{u'(t)}_{D(A^{q/2+1-\epsilon/2})}&\le \int_{0}^{t}\norm[0]{F(t-s)f'(s)}_{D(A^{q/2+1-\epsilon/2})}\,ds+\norm[0]{F(t)f(0)}_{D(A^{q/2+1-\epsilon/2})}\\
&\lesssim \int_{0}^{t}(t-s)^{-1+\alpha\epsilon/2}\norm[0]{f'(s)}_{D(A^{q/2})}\,ds+t^{-1+\alpha\epsilon/2}\norm[0]{f(0)}_{D(A^{q/2})}\\
&\lesssim \epsilon^{-1}t^{\alpha\epsilon/2}\norm[0]{f'}_{L^{\infty}(0,t;D(A^{q/2}))}+t^{-1+\alpha\epsilon/2}\norm[0]{f(0)}_{D(A^{q/2})}.
\end{aligned}
\end{align*}
This concludes the proof.

\noindent{\it{Proof of (\ref{bhnonhomo_ddtve}).}}
Differentiate (\ref{bhnonhomofstdrvt}) with respect to time and use the first identity in (\ref{bhmlderivties}) with $k=2$ to obtain
\begin{align}\label{bhnonhomo2nddrvt}
\begin{aligned}
u''(t)&=\int_{0}^{t}F(t-s)f''(s)\,ds+F(t)f'(0)+F'(t)f(0)\\
&=\int_{0}^{t}F(t-s)f''(s)\,ds+F(t)f'(0)
+ t^{\alpha-2}\sum_{j=1}^{\infty}(f(0),\phi_j)E_{\alpha,\alpha-1}(-\mu_jt^{\alpha})\phi_j(x).
\end{aligned}
\end{align} 
An application of Lemma \ref{bhnonhomosloptr_stbt} with $p=q=0$ shows
\begin{align*}
\begin{aligned}
\norm[0]{\int_{0}^{t}F(t-s)f''(s)\,ds}&\lesssim \int_{0}^{t}(t-s)^{\alpha-1}\norm[0]{f''(s)}\,ds\lesssim \mathcal{I}^{\alpha}\norm[0]{f''(\cdot)}(t),\\
 \norm[0]{F(t)f'(0)}&\lesssim t^{\alpha-1}\norm[0]{f'(0)}.
\end{aligned}
\end{align*} 
To bound the last term in (\ref{bhnonhomo2nddrvt}),  proceed as the proof of the case $i=1, p=0$ in (\ref{bhhomopfn_func_ns}) to obtain 
\begin{align*}
\norm[0]{F'(t)f(0)}\lesssim t^{\alpha-2}\norm[0]{f(0)}.
\end{align*}
A combination of the last three estimates in (\ref{bhnonhomo2nddrvt}) completes the proof of (\ref{bhnonhomo_ddtve}).

\noindent{\it{Proof of (\ref{caputo_estnhprblm}).}} The proof is immediate from  \eqref{bhmaineqn} and \eqref{bhnonhomo_fnest} with $q=\epsilon$. \hfill{$\Box$}
\begin{remark}\label{regularity_rmk}
The combination of \eqref{bhhomopfn_dtve_s} with $\beta\in [1/2,1)$ and \eqref{bhnonhomo_dtve} with $q\in [2\beta-2+\epsilon,1]$ results in the following estimate for the solution $u$ of \eqref{bhmaineqn}: For $t\in(0,T]$,
\begin{align*}
\|u'(t)\|_{D(A^{\beta})}\lesssim  t^{-1+\alpha(1-\beta)}\|u_0\|_{D(A)}+ \epsilon^{-1}t^{\alpha\epsilon/2}\norm[0]{f'}_{L^{\infty}(0,t;D(A^{q/2}))}+ t^{-1+\alpha\epsilon/2}\norm[0]{f(0)}_{D(A^{q/2})}.  
\end{align*}
\end{remark}
\section{Semidiscrete scheme}
In this section, we describe the lowest-order finite element discretization schemes for the spatial variable in Subsection 3.1. The Ritz projection operator and its approximation properties are stated in Subsection 3.2.
\subsection{Lowest-order finite element discretizations}

Let $\T$ denote a shape regular triangulation of the polygonal Lipschitz domain into compact triangles.  Associate its piecewise constant mesh-size $h_\T \in P_0(\T)$ with $h_K:= h_\T|_K:= {\rm diam} (K) \approx |K|^{1/2}$ in any triangle $K \in \T$ of area $|K|$ and its maximal mesh-size $h:= {\rm max} \; h_\T$. Let ${\mathcal V}$ (resp. ${\mathcal V}(\Omega)$ or ${\mathcal V}(\partial \Omega)$) denote the set of all (resp. interior or boundary) vertices in $\T$. Let ${\mathcal E}$ (resp. ${\mathcal E}(\Omega)$ or ${\mathcal E}(\partial \Omega)$) denote the set of all (resp. interior or boundary) edges. The length of an edge $e$ is denoted by $h_e$.  
\medskip {Let the  Hilbert space  $H^m(\T)\equiv \prod_{K\in \T} H^m(K)$}. {The edge-patch $\omega(e):=\text{\rm int}(K_+\cup K_-)$ of the interior edge $e=\partial K_+\cap\partial K_-\in\E(\Omega)$ is the interior of union $K_+\cup K_-$ of the neighboring triangles $K_+$ and $K_-$;   the jump and average of $\varphi$ are defined by $\big[\!\!\big[\varphi\big]\!\!\big]:=\varphi|_{K_+}-\varphi|_{K_-}$ and  $\big\{\!\!\!\big\{\varphi\big\}\!\!\!\big\}:=\half\left(\varphi|_{K_+}+\varphi|_{K_-}\right)$ across the interior edge 
$e$ of the adjacent triangles  $K_+$ and $K_-\in\T$ in an order such that the unit normal vector  
$\nu_{K_+}|_e= \nu_e= - \nu_{K_-}|_e$ along the edge $e$
has a fixed orientation and points outside $K_+$ and inside $K_-$; $\nu_K$ is the outward unit normal of $K$ along $\partial K$.}
Further for $e\in \mathcal{E}(\partial\Omega)$, define $\omega(e):=\text{int}(K)$, and the jump and average by $\big[\!\!\big[\varphi\big]\!\!\big]:=\varphi|_e$ and $\big\{\!\!\!\big\{\varphi\big\}\!\!\!\big\}:=\varphi|_e$.
For functions in $H^2_0(\Omega)$, the notation $\trinl \cdot \trinr:=|\cdot|_{H^{2}(\Omega)}$ stands for the energy norm.  The notation $\trinl \cdot \trinr_{\text{pw}}:=|\cdot|_{H^{2}(\cT)}:=\| D^2_\text{pw}\cdot\|$ 
refers to the piecewise energy norm with the piecewise Hessian $D_\text{pw}^2$. 
Define the piecewise polynomials space $P_r(\mathcal{T})$ of degree $r\in \mathbb{N}$ by $P_r(\mathcal{T})=\{ v\in L^2(\Omega): v|_{K}\in P_r(K) \text{ for all } K \in \mathcal{T} \}$. 

\medskip
\noindent The nonconforming Morley finite element space $\text{M}(\mathcal{T})$ \cite{ciarlet78} is defined as 
\begin{align*}
\text{M}(\mathcal{T}):=\{  & \chi_h\in P_2(\mathcal{T}): \chi_h \text{ is continuous at the vertices and its normal derivative } {\partial \chi_h}/{\partial \nu} \text{ is }\\ 
& \text{ continuous at the midpoints of interior edges}, \chi_h \text{ vanishes at the vertices on }  \partial\Omega \\
& \text{ and its normal derivative }  {\partial \chi_h}/{\partial \nu} \text{ vanishes at the midpoints of boundary edges}   \}.
\end{align*}
On a finite-dimensional space $V_h\subset H^2(\mathcal{T})$,  define a mesh-dependent broken norm \cite{cgn2015}  by
\begin{align}\label{bhdisctenoms}
\begin{aligned}
\norm[0]{\chi_h}_{h}^2&=\sum_{K\in \mathcal{T}}|\chi_h|_{H^2(K)}^2 + \sum_{e\in\mathcal{E}}\sum_{z\in\mathcal{V}(e)}h_e^{-2}\big|\big[\!\!\big[   \chi_h\big]\!\!\big](z)\big|^2+ \sum_{e\in\mathcal{E}}\Big| \fint_{e} \Big[\!\!\Big[  \frac{\partial \chi_h}{\partial \nu}\Big]\!\!\Big]\,ds \Big|^2,
\end{aligned}
\end{align} 
where $\fint_{e}$ denotes the integral mean over the edge $e$. In particular, for $\chi_h \in \M(\T)$, $\|\chi_h\|_h = \trinl \chi_h \trinr_{\rm pw}$ as the jump terms in \eqref{bhdisctenoms} vanish.  Further, the discrete bilinear form $a_h:(V_h+\text{M}(\mathcal{T}))\times (V_h+\text{M}(\mathcal{T}))\rightarrow \mathbb{R}$ in all the examples in this paper has the form
\[ a_h(\cdot,\cdot) = a_{\pw}(\cdot,\cdot)+b_h(\cdot,\cdot) +c_h(\cdot,\cdot),\]
and satisfies {\bf (H)} below.
\begin{description}
\item[(H)] $a_h(\cdot,\cdot)$ is symmetric, positive-definite, and
continuous on $V_h$ with respect to the discrete norm
$\norm[0]{\cdot}_h$, i.e.,
$\exists $ constants $\beta_1, \beta_2>0$ independent of $h$ such that,
for all $w_h,\chi_h\in V_h$, $a_h(w_h,\chi_h)=a_h(\chi_h,w_h)$ and
\begin{align}\label{bhbilinearcoer_cts}
a_h(\chi_h,\chi_h)\ge \beta_1 \norm[0]{\chi_h}_{h}^2 \text{ and } 
a_h(w_h,\chi_h)\le \beta_2\norm[0]{w_h}_h\norm[0]{\chi_h}_h.
\end{align}
\end{description}
The semidiscrete problem that corresponds to the weak formulation in Definition  \ref{weak-sol} seeks $u_h\in W^{1,1}(0,T;V_h)$ such that 
\begin{equation}\label{bhsemidisscheme}
\begin{aligned}
\partial_t^{\alpha}(u_h(t),\chi_h)+a_h( u_h(t), \chi_h)&=(f(t),\chi_h)  \text{ for all } \chi_h\in V_h, 0<t\le T,\\ 
 u_h(0)&=P_hu_0 \in V_h.
\end{aligned}
\end{equation}
\nopagebreak
Here $P_h:L^2(\Omega)\rightarrow V_h$ denotes the $L^2$-projection defined by $(P_hv,\chi_h)=(v,\chi_h)$ for all $\chi_h\in V_h$.

\noindent Now we present Examples \ref{bhmorleyscheme}-\ref{bhcoipscheme} below for which the discrete bilinear form $a_h(\cdot,\cdot)$ satisfies {\bf(H)} \cite[Section 5]{cn2022}.
\begin{example}[Morley]\label{bhmorleyscheme}
In this case, $V_h:=\rm M(\mathcal{T})$, and { for all } $w_{h},\chi_{h}\in \rm M(\mathcal{T})$, 
\begin{align*}
\begin{aligned}
a_{h}(w_{h},\chi_{h}):=a_{\rm pw}(w_{h},\chi_{h}):= \int_{\Omega} D_{\pw}^2w_{h}:D_{\pw}^2\chi_{h}\,dx.
\end{aligned}
\end{align*}
The discrete Morley norm on $V_h$ is defined by $\trinl \cdot \trinr_h=\trinl \cdot \trinr_\pw= a_\pw(\cdot,\cdot)^{1/2}$. Notice that this norm is equivalent on $V_h$  to that introduced in \eqref{bhdisctenoms}. 
\end{example}
\begin{example}[dG]\label{bhdgscheme}
Choose $V_h:=P_2(\mathcal{T})$, and for all $w_h,\chi_h\in V_h$, let
\begin{align*}
\begin{aligned}
a_{h}(w_h,\chi_h)&:= a_{\rm pw}(w_{h},\chi_{h})+b_h(w_{h},\chi_{h})+c_{\rm dG}(w_{h},\chi_{h}),\\
b_h(w_{h},\chi_{h})&:=-\mathcal{J}(w_{h},\chi_{h})-\mathcal{J}(\chi_{h},w_{h}),
\mathcal{J}(w_{h},\chi_{h}):= \sum_{e\in \mathcal{E}}\int_{e} \big[\!\!\big[  \nabla w_h\big]\!\!\big]\cdot\big\{\!\!\!\big\{ D_{\pw}^2\chi_h \big\}\!\!\!\big\}\nu \,ds,\\
c_{\rm dG}(w_{h},\chi_{h})&:=\sum_{e\in\mathcal{E}}\Big(\frac{\sigma_{\rm  dG}^1}{h_e^3}\int_{e}\big[\!\!\big[  w_h\big]\!\!\big]\big[\!\!\big[  \chi_h\big]\!\!\big]\,ds+ \frac{\sigma_{\rm dG}^2}{h_e}\int_{e}\Big[\!\!\Big[  \frac{\partial w_h}{\partial \nu}\Big]\!\!\Big]\Big[\!\!\Big[  \frac{\partial \chi_h}{\partial \nu}\Big]\!\!\Big]\,ds\Big),    
\end{aligned}
\end{align*}
where $\sigma_{\rm  dG}^1$, $\sigma_{\rm  dG}^2 >0$ are the penalty parameters and
 $a_{\rm pw}(\cdot,\cdot)$ is as defined in Example \ref{bhmorleyscheme}. The dG norm $\|\cdot\|_{\rm dG}$ on $P_2(\T)$ is defined  by $\|w_h\|_{\rm dG}=\Big( \trinl w_h \trinr_\pw^2+ c_{\rm dG}(w_h,w_h)\Big)^{1/2}$, $w_h\in V_h$. {As in the previous example, this norm is equivalent on $V_h$  to that introduced in \eqref{bhdisctenoms}. }
\end{example}
\begin{example}[$C^{0}$IP]\label{bhcoipscheme}
Choose $V_h:=P_2(\mathcal{T})\cap H_0^1(\Omega)$ and for all $w_h,\chi_h\in V_h$, define
\begin{align*}
\begin{aligned}
a_{h}(w_h,\chi_h)&:= a_{\rm pw}(w_{h},\chi_{h})+b_h(w_{h},\chi_{h})+c_{\rm IP}(w_{h},\chi_{h}),\\
c_{\rm IP}(w_{h},\chi_{h})&:= \sum_{e\in\mathcal{E}}\frac{\sigma_{\rm IP}}{h_e}\int_{e}\Big[\!\!\Big[  \frac{\partial w_h}{\partial \nu}\Big]\!\!\Big]\Big[\!\!\Big[  \frac{\partial \chi_h}{\partial \nu}\Big]\!\!\Big]\,ds,
\end{aligned}
\end{align*}
where $\sigma_{\rm IP}$ is a positive parameter,  $a_{\rm pw}(\cdot,\cdot)$ and $b_h(\cdot,\cdot)$ are as defined in Examples \ref{bhmorleyscheme} and \ref{bhdgscheme}. The discrete norm $\|\cdot\|_{\rm IP}$ on the space $V_h$ reads $\|w_h\|_{\rm IP}=\Big(  \trinl w_h \trinr_\pw^2+ c_{\rm IP}(w_h,w_h) \Big)^{1/2}$ for  $w_h\in V_h$. {In this example as well, $\|\cdot\|_{\rm IP}$ is equivalent on $V_h$ to that introduced in \eqref{bhdisctenoms}. }
\end{example}
\noindent The equivalence of the common norm $\|\cdot\|_h$ with the norms defined in the Examples \ref{bhmorleyscheme}-\ref{bhcoipscheme} (see \cite{cn2022}) is helpful  in the proofs of the approximation properties of the Ritz projection in the next section. 
\subsection{Ritz projection and its approximation properties}\label{ritz}

We introduce the Ritz projection which is the elliptic projection for the biharmonic problem and state its approximation properties. The Ritz projection $\mathcal{R}_h:V\rightarrow V_h$ is defined by
\begin{align}\label{bhritzprjn}
a_h(\mathcal{R}_hv,\chi_h)=a(v, Q\chi_h) \text{ for all } \chi_h\in V_h  \text{ and } v\in V,
\end{align}
 where $Q:=JI_{\text{M}}$ is a smoother defined from $H^2(\cT)$ to $V$, with  $I_\text{M}:H^2(\cT) \rightarrow \text{M}(\mathcal{T})$ and $J:\text{M}(\mathcal{T}) \rightarrow V$ denoting  the extended Morley interpolation operator and the companion operator, respectively (see Appendix for the details of the definition and properties of the interpolation and companion operators). 
 The Lax-Milgram lemma shows that the projection $\mathcal{R}_h$ is a well-defined operator on $V$.
 
\noindent In the semidiscrete error analysis discussed in Section \ref{bhsemierrsec}, the  error $u(t)-u_h(t)$ is split by introducing the Ritz projection $\mathcal{R}_h$ as
\[
u(t)-u_h(t)= (u(t)-\mathcal{R}_hu(t))+(\mathcal{R}_hu(t)-u_h(t)).\] Given the approximation properties for 
  the Ritz projection from Lemma \ref{bhritzprjn_approx}, the main task will be to establish bounds for  $u_h(t)-\mathcal{R}_hu(t)$ in the next section.
\begin{lemma}[Approximation properties]\label{bhritzprjn_approx}
For any $v \in V$ and the Ritz projection $\mathcal{R}_h:V\rightarrow V_h $ defined in (\ref{bhritzprjn}), the approximation properties in the energy and $L^2$ norms stated below hold. 
\begin{align*}
\begin{aligned}
& \|{v-\mathcal{R}_hv}\| + h^{\gamma}\norm[0]{v-\mathcal{R}_hv}_h\lesssim h^{2\gamma}\norm[0]{v}_{H^{2+\gamma }(\Omega)}, 
\end{aligned} 
\end{align*}
for all  $\gamma\in [0,\gamma_0]$ and for all  $v\in H^{2+\gamma }(\Omega)$, where $\gamma_0\in(1/2,1]$ is the regularity index introduced in Remark \ref{index_remark}.

\end{lemma}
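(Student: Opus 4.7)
My plan is to handle the two norms separately, the energy-norm bound by direct analysis of the defining error equation and the $L^2$ bound by an Aubin-Nitsche duality argument, inserting the smoother $Q=JI_\M$ whenever a conforming witness in $V$ is needed.

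For the $\|\cdot\|_h$ estimate I would decompose $v-\mathcal{R}_h v=(v-I_\M v)+\xi_h$ with $\xi_h:=I_\M v-\mathcal{R}_h v\in V_h$. The first summand is controlled by the classical Morley interpolation estimate $\|v-I_\M v\|_h\lesssim h^{\gamma}\|v\|_{H^{2+\gamma}(\Omega)}$ (valid for $v\in H^{2+\gamma}(\Omega)\cap V$ and $\gamma\in[0,\gamma_0]$). For $\xi_h$, the coercivity in (H) combined with the definition (\ref{bhritzprjn}) yields
\begin{align*}
\beta_1 \|\xi_h\|_h^2 &\le a_h(\xi_h,\xi_h) = a_h(I_\M v,\xi_h) - a(v,Q\xi_h) \\
&= a_\pw(I_\M v-v,\xi_h) + a_\pw(v,\xi_h - Q\xi_h) + b_h(I_\M v,\xi_h) + c_h(I_\M v,\xi_h),
\end{align*}
where the last step uses $Q\xi_h\in V$ so that $a(v,Q\xi_h)=a_\pw(v,Q\xi_h)$. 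Each term is then bounded by Cauchy-Schwarz: the first via the Morley interpolation estimate; the third and fourth (present only for dG and $C^0$IP) via scaled trace inequalities on edges applied to the jumps of $I_\M v$; and the second by the key smoother estimate $|\xi_h-Q\xi_h|_{H^2(\T)}\lesssim h^\gamma$-scaled jump quantities of $\xi_h$, obtained from the construction of the companion operator $J$ in \cite{cn2022}. Dividing by $\|\xi_h\|_h$ yields $\|\xi_h\|_h\lesssim h^{\gamma}\|v\|_{H^{2+\gamma}(\Omega)}$.

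For the $L^2$ estimate I would invoke duality: let $z\in V$ solve $a(z,\phi)=(v-\mathcal{R}_h v,\phi)$ for all $\phi\in V$, so that by elliptic regularity $\|z\|_{H^{2+\gamma_0}(\Omega)}\lesssim \|v-\mathcal{R}_h v\|$. Using $v\in V$ and the split
\begin{align*}
\|v-\mathcal{R}_h v\|^2 &= (v-\mathcal{R}_h v,v) - (v-\mathcal{R}_h v,Q\mathcal{R}_h v) - (v-\mathcal{R}_h v,\mathcal{R}_h v-Q\mathcal{R}_h v),
\end{align*}
the first inner product equals $a(z,v)=a(v,z)$, and the second equals $a(z,Q\mathcal{R}_h v)=a_h(\mathcal{R}_h z,\mathcal{R}_h v)=a_h(\mathcal{R}_h v,\mathcal{R}_h z)=a(v,Q\mathcal{R}_h z)$, by the definition of $\mathcal{R}_h z$ applied symmetrically and the symmetry of $a_h$. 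Their difference is $a(v,z-Q\mathcal{R}_h z)=a(v-Q\mathcal{R}_h v,z-Q\mathcal{R}_h z)$ up to a commutator term $a_h(\mathcal{R}_h v,\mathcal{R}_h z)-a(Q\mathcal{R}_h v,Q\mathcal{R}_h z)$ that is controlled by the smoother properties. Bounding $|v-Q\mathcal{R}_h v|_{H^2}\lesssim h^{\gamma}\|v\|_{H^{2+\gamma}(\Omega)}$ via the just-proved energy estimate together with $\|\mathcal{R}_h v-Q\mathcal{R}_h v\|_h\lesssim h^{\gamma}\|v\|_{H^{2+\gamma}(\Omega)}$, and likewise $|z-Q\mathcal{R}_h z|_{H^2}\lesssim h^{\gamma_0}\|z\|_{H^{2+\gamma_0}(\Omega)}$, the leading term contributes $h^{\gamma+\gamma_0}\|v\|_{H^{2+\gamma}(\Omega)}\|v-\mathcal{R}_h v\|$. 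The final correction $(v-\mathcal{R}_h v,\mathcal{R}_h v-Q\mathcal{R}_h v)$ is absorbed using the sharp smoother $L^2$-bound $\|\chi_h-Q\chi_h\|\lesssim h^{2}\|\chi_h\|_h$ combined with the stability of $\mathcal{R}_h$. Dividing by $\|v-\mathcal{R}_h v\|$ and then interpolating between $\gamma=0$ (stability) and $\gamma=\gamma_0$ (full rate) delivers the claimed bound $\|v-\mathcal{R}_h v\|\lesssim h^{2\gamma}\|v\|_{H^{2+\gamma}(\Omega)}$.

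The main obstacle is that $V_h\not\subset V$, so one cannot test the dual problem directly against $v-\mathcal{R}_h v$; this is what forces the repeated insertion of the smoother $Q$ and produces several consistency commutators whose uniform control in $\gamma\in[0,\gamma_0]$ is delicate. In particular, establishing the sharp $L^2$-scaling $\|\chi_h-Q\chi_h\|\lesssim h^{2}\|\chi_h\|_h$ and the unified treatment of the jump penalty contributions $b_h$, $c_h$ across the Morley, dG, and $C^0$IP schemes is where the genuinely new input, beyond standard conforming Ritz analysis, must enter.
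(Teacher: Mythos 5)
Your overall architecture (energy bound from coercivity plus the defining relation \eqref{bhritzprjn}, then $L^2$ bound by duality) matches the paper's, but two steps as written do not close. First, the decomposition $v-\mathcal{R}_hv=(v-I_\M v)+\xi_h$ with $\xi_h:=I_\M v-\mathcal{R}_hv$ presumes $I_\M v\in V_h$, which fails for the $C^0$IP scheme ($V_h=P_2(\T)\cap H^1_0(\Omega)$ does not contain $\M(\T)$); the paper inserts the transfer operator $I_h:\M(\T)\to V_h$ and works with $\zeta_h=I_hI_\M v-\mathcal{R}_hv$ precisely for this reason. Second, and more seriously, your bound for the consistency term $a_\pw(v,\xi_h-Q\xi_h)$ relies on a claimed smallness $|\xi_h-Q\xi_h|_{H^2(\T)}\lesssim h^{\gamma}\cdot(\text{jumps of }\xi_h)$. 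No such estimate holds: the companion operator only gives $\trinl \xi_h-Q\xi_h\trinr_\pw\lesssim\min_{w\in V}\trinl\xi_h-w\trinr_\pw\lesssim\|\xi_h\|_h$ (Lemma \ref{bhcompanion_lem} (iv), (vii)), with no power of $h$ for a generic discrete function. With only that, Cauchy--Schwarz yields $a_\pw(v,\xi_h-Q\xi_h)\lesssim\|v\|_{H^2(\Omega)}\|\xi_h\|_h$, and after dividing by $\|\xi_h\|_h$ you obtain stability, not the rate $h^{\gamma}$. The missing ingredient is the $a_\pw$-orthogonality of the Morley interpolant, $a_\pw(w-I_\M w,\chi)=0$ for $w\in V$, $\chi\in P_2(\T)$, combined with $I_\M J=\mathrm{id}$: these identities show $a_\pw(I_\M v,(I-J)I_\M\xi_h)=0$, so the term reduces to $a_\pw(v-I_\M v,(I-J)I_\M\xi_h)$ and the factor $h^{\gamma}$ comes from the interpolation error of $v$, not from the smoother. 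This is the crux of the paper's Theorem \ref{bhritzenergy_pf} and is absent from your argument.

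For the $L^2$ estimate, your symmetric duality with $L^2$ data and the identity $a(z,Q\mathcal{R}_hv)=a_h(\mathcal{R}_hz,\mathcal{R}_hv)=a(v,Q\mathcal{R}_hz)$ is a legitimate and elegant alternative to the paper's $H^{-s}$-duality (the paper tests against a functional $\widetilde{\mathcal F}\in H^{-(2-\gamma)}(\Omega)$ and first proves $\|v-\mathcal{R}_hv\|_{H^{2-\gamma}(\T)}\lesssim h^{\gamma}\|v-\mathcal{R}_hv\|_h$), and your final absorption via $\|\chi_h-Q\chi_h\|\lesssim h^2\min_{w\in V}\|w-\chi_h\|_h$ is sound. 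However, the ``commutator'' $a_h(\mathcal{R}_hv,\mathcal{R}_hz)-a(Q\mathcal{R}_hv,Q\mathcal{R}_hz)$ that you dismiss as ``controlled by the smoother properties'' is exactly where all the nonconformity and penalty contributions ($b_h$, $c_h$, and the $a_\pw$-mismatch between $\mathcal{R}_h$ and $Q\mathcal{R}_h$) reside; controlling it at order $h^{\gamma+\gamma_0}$ requires the same orthogonality identity and the structural hypotheses \textbf{(H1)}--\textbf{(H4)} used in the paper's Theorem \ref{bhritzweak_pf}, so the gap from the energy step propagates here. Until the Morley orthogonality is brought in, neither estimate is established.
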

\noindent The proofs are available in \cite{cn2022}; we provide an outline in the appendix for continuity in reading. 
In the error analysis, we also need the following approximation property of the operator $Q$ for piecewise quadratic functions. 
\begin{lemma}\cite[Theorem 4.5 (d)]{cn2022}\label{bhcompmorlyestmte}
For any $\chi_h\in P_2(\mathcal{T})$, the  operator $Q=JI_{\M}$ 
satisfies 
\begin{align*}
\|{\chi_h -Q\chi_h}\|_{H^s(\T)} \le C_1 h^{2-s}\min_{v\in V}\norm[0]{v- \chi_h}_h \text{ for any } 0 \le s \le 2 \text{ and a constant } C_1>0.
\end{align*}
\end{lemma}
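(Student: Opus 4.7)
My plan is to combine the right-inverse property of the companion operator with a Bramble--Hilbert style argument. The starting point is the identity $I_\M J = \operatorname{id}$ on $\M(\T)$, which forces
\[
I_\M(\chi_h - Q\chi_h) = I_\M \chi_h - I_\M J I_\M \chi_h = 0,
\]
so $w := \chi_h - Q\chi_h \in H^2(\T) + V$ lies in the kernel of the extended Morley interpolation. Because $I_\M$ reproduces affine polynomials on each triangle and uses only vertex values and edge-averaged normal derivatives, a standard scaling argument from a reference element yields, for any $w \in H^2(K)$ with $I_\M w|_K = 0$,
\[
\|w\|_{H^s(K)} \le C\, h_K^{2-s}\, |w|_{H^2(K)}, \qquad 0 \le s \le 2.
\]
Summing over $K \in \T$ and applying this to $w = \chi_h - Q\chi_h$ reduces the task to bounding $|\chi_h - Q\chi_h|_{H^2(\T)}$ by $\min_{v \in V}\|v - \chi_h\|_h$, so that the required factor $h^{2-s}$ will come out of the Bramble--Hilbert step.

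For the second step, I would exploit linearity: for any $v \in V$, the splitting $\chi_h - Q\chi_h = (\chi_h - v) + (v - Q\chi_h)$ gives $|\chi_h - v|_{H^2(\T)} \le \|\chi_h - v\|_h$ directly from the definition of $\|\cdot\|_h$, while the remainder $v - Q\chi_h$ lies in $V \subset H^2(\Omega)$. Writing $v - Q\chi_h = Q(v - \chi_h) + (v - Qv)$, stability of $J$ and $I_\M$ yields $|Q(v - \chi_h)|_{H^2(\Omega)} \lesssim \|v - \chi_h\|_h$, and taking the infimum over $v$ at the end gives the right-hand side of the claim.

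The main obstacle is the residual term $|v - Qv|_{H^2(\Omega)}$: a bare stability bound on $Q$ only yields $|v - Qv|_{H^2(\Omega)} \lesssim |v|_{H^2(\Omega)}$, which fails to decay with $\|v - \chi_h\|_h$. I would dispose of it by specializing $v$ to the $\|\cdot\|_h$-orthogonal projection $v^*$ of $\chi_h$ onto $V$, and then using the Galerkin orthogonality $\|\chi_h - v^*\|_h \le \|\chi_h - Qv^*\|_h$ together with the structure of the companion operator to show that either $Qv^* = v^*$ identically, or that $|v^* - Qv^*|_{H^2(\Omega)}$ is itself controlled by $\|v^* - \chi_h\|_h$. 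This interplay between the best-approximation in $\|\cdot\|_h$ and the action of $Q$ on $V$ is the technically delicate point, and it is where I expect the bulk of the effort to lie; once it is in place, a triangle inequality and infimum over $V$ finish the proof.
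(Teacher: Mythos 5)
You are reconstructing a result that the paper itself does not prove: Lemma \ref{bhcompmorlyestmte} is imported verbatim from \cite[Theorem 4.5 (d)]{cn2022}, so there is no in-paper proof to compare against, and your argument has to stand on the ingredients recorded in the appendix. Your first step is fine: property (vi) of Lemma \ref{bhcompanion_lem} gives $I_{\rm M}(\chi_h-Q\chi_h)=0$. The gap is in the step that follows. The extended Morley interpolation of Lemma \ref{bhmorley_lem} is built from \emph{patch averages}: $(I_{\rm M}v_{\pw})(z)=|\T(z)|^{-1}\sum_{K\in\T(z)}(v_{\pw}|_K)(z)$ and $\fint_e \partial (I_{\rm M}v_{\pw})/\partial\nu\,ds=\fint_e\big\{\!\!\!\big\{\partial v_{\pw}/\partial\nu\big\}\!\!\!\big\}\,ds$. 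Hence $I_{\rm M}w=0$ does \emph{not} force the six local Morley degrees of freedom of $w|_K$ to vanish on each triangle; only the averages over vertex patches and edges vanish. Concretely, for $w=\chi_h-Q\chi_h$ one has $w|_K(z)=\chi_h|_K(z)-|\T(z)|^{-1}\sum_{K'\in\T(z)}\chi_h|_{K'}(z)\neq 0$ whenever $\chi_h$ jumps at $z$, and $\fint_e\partial (w|_{K_\pm})/\partial\nu\,ds=\pm\tfrac12\fint_e\big[\!\!\big[\partial\chi_h/\partial\nu\big]\!\!\big]\,ds$. The elementwise Bramble--Hilbert inequality $\|w\|_{H^s(K)}\le C h_K^{2-s}|w|_{H^2(K)}$ you invoke is therefore false as stated; the correct local estimate carries extra terms proportional to these nonzero vertex values and edge means, and it is exactly those terms that must be absorbed by the jump contributions $h_e^{-2}|\big[\!\!\big[\chi_h\big]\!\!\big](z)|^2$ and $|\fint_e\big[\!\!\big[\partial\chi_h/\partial\nu\big]\!\!\big]\,ds|^2$ of $\|\chi_h-v\|_h$ in \eqref{bhdisctenoms} (which are independent of $v\in V$, since $v$ has no jumps). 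Dropping them is not cosmetic: it is the reason the right-hand side of the lemma is the full norm $\|\cdot\|_h$ and not the broken $H^2$ seminorm alone.

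The second half of your plan is also not a proof where you yourself flag the difficulty. The residual $|v-Qv|_{H^2(\Omega)}$ does not decay with $\min_{v\in V}\|v-\chi_h\|_h$, and the proposed rescue --- that the $\|\cdot\|_h$-best approximation $v^*$ satisfies $Qv^*=v^*$ or $|v^*-Qv^*|_{H^2(\Omega)}\lesssim\|v^*-\chi_h\|_h$ --- is not true in general ($Q=JI_{\rm M}$ maps $V$ into $({\rm HCT}(\T)+P_8(\T))\cap V$ and is not the identity on $V$) and is nowhere established. Both problems disappear if you split through the Morley interpolant rather than through $v$: write $\chi_h-Q\chi_h=(\chi_h-I_{\rm M}\chi_h)+(1-J)I_{\rm M}\chi_h$. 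The summand $(1-J)I_{\rm M}\chi_h$ \emph{does} have vanishing local Morley degrees of freedom on every triangle (properties (i) and (iii) of Lemma \ref{bhcompanion_lem}, because Morley functions are single-valued at vertices and have single-valued edge means of the normal derivative), so your scaling argument applies to it; combined with property (iv) and \eqref{bhmorley_lem_extn2} this gives $\|(1-J)I_{\rm M}\chi_h\|_{H^s(\T)}\lesssim h^{2-s}\min_{v\in V}\|\chi_h-v\|_h$. The summand $\chi_h-I_{\rm M}\chi_h$ is the extended Morley interpolation error of a piecewise quadratic, and estimating it in $H^s(\T)$ is precisely where the corrected local inequality with the jump terms is needed. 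This is the route taken in \cite{cn2022}.
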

\section{Semidiscrete error estimates}\label{bhsemierrsec}
In this section, we derive error bounds for the semidiscrete scheme for both smooth and nonsmooth initial data. {To start with, in Subsection 4.1, we state some important properties of fractional integrals that are relevant in the context}. This is followed by the main results of this section and their proofs in Subsections 4.2 and 4.3.
\subsection{Properties of Riemann-Liouville fractional integrals}
For all $\alpha\in(0,\infty)$, all $\beta\in(0,\infty)$ satisfying $\alpha+\beta\ge 1$, all $v\in L^1(0,T)$, the operators
${\cal I}^{\alpha}$ and ${\cal I}^{\beta}$ (cf. \eqref{rlfracintegral}) satisfy
\begin{align}\label{bhsemigroup}
{\cal I}^{\alpha}{\cal I}^{\beta} v(t) = {\cal I}^{\alpha+\beta} v(t), \mbox{ for almost all  }t\in(0,T).    
\end{align}
If $v\in C([0,T])$, then (\ref{bhsemigroup}) is satisfied at all points  $t \in [0,T]$ and $\alpha,\beta>0$ (cf. \cite[p. 34]{samkokilbasmarichev93}).
Recall that $\kappa_{\alpha}(t):=t^{\alpha-1}/\Gamma(\alpha)$. In the rest of this subsection, we assume that $0<\alpha<1$, and so $0<1-\alpha<1$. The  three identities below hold for $v_1(t)=tv(t)$ and $v_2(t)=t^{2}v(t)$ for $t\in[0,T]$ (see \cite[Lemma 2]{karaamustaphapani18} and \cite[Lemma 2.1]{mustapha18} for a proof).  
\begin{align}
t\mathcal{I}^{\alpha}v(t)&= \mathcal{I}^{\alpha}v_1(t)+\alpha\mathcal{I}^{\alpha+1}v(t), \label{bhFracIntrule}\\
t\mathcal{I}^{\alpha}v'(t)&= \mathcal{I}^{\alpha}(v_1)'(t)+(\alpha-1)\mathcal{I}^{\alpha}v(t)-t\kappa_{\alpha}(t)v(0), \label{bhminiest1}\\
t^2\mathcal{I}^{\alpha}v'(t)&= \mathcal{I}^{\alpha}(v_2)'(t)+2(\alpha-1)\mathcal{I}^{\alpha}v_1(t)+\alpha(\alpha-1)\mathcal{I}^{\alpha+1}v(t)-t^2\kappa_{\alpha}(t)v(0). \label{bhminiest2}
\end{align}
For $\phi, v\in L^2(0,T;L^2(\Omega))$, since $\cos(\alpha \pi/2)-(1-\alpha)\ge 0$,  the following continuity property with any positive $\vartheta$ holds for $\mathcal{I}^{1-\alpha}$ \cite[Lemma 3.1 (iii)]{mustaphaschotzau14}:  
\begin{align}\label{bhcontinuity}
\int_{0}^{t}(\mathcal{I}^{1-\alpha}\phi,v)\,ds &\le  \vartheta \int_{0}^{t}(\mathcal{I}^{1-\alpha}v,v)\,ds +\frac{1}{4\vartheta(1-\alpha)^2}\int_{0}^{t}(\mathcal{I}^{1-\alpha}\phi,\phi)\,ds.     
\end{align}
If $v:[0,T]\rightarrow L^2(\Omega)$ is a piecewise continuous function in time, then $\mathcal{I}^{\alpha}$ satisfies \cite[Lemma 3.1 (ii)]{mustaphaschotzau14}
\begin{align}\label{bhpositivt}
\int_{0}^{T}(\mathcal{I}^{\alpha}v,v)\,dt&\ge \cos(\alpha \pi/2)\int_{0}^{T}\|\mathcal{I}^{\alpha/2}v\|^2\,dt\ge 0.  
\end{align}
In addition, if $v':[0,T]\rightarrow L^2(\Omega)$ is a  piecewise continuous function in time, then for $t\in(0,T]$, {it follows from \cite[Lemma 2.1]{lemcleanmustapha16} that}
\begin{align}\label{bhfuncteste}
\norm[0]{v(t)-v(0)}^2\lesssim t^{\alpha}\int_{0}^{t}\|\mathcal{I}^{(1-\alpha)/2}v'\|^2\,ds   \lesssim t^{\alpha}\int_{0}^{t}(\mathcal{I}^{1-\alpha}v',v')\,ds,
\end{align}
where the last inequality follows from  \eqref{bhpositivt}.

\noindent The proofs of this section use the inequality below frequently. For $a,b\ge 0$, it holds that 
\begin{equation}\label{algebraic}
(a+b)^2\lesssim (a^2+b^2)\lesssim (a+b)^2. \; 
\end{equation}

\subsection{Main results}
For the semidiscrete error analysis, we split the  error $u(t)-u_h(t)$ by introducing the Ritz projection $\mathcal{R}_h$ from \eqref{bhritzprjn} as
\begin{align}\label{bhsemidissplit_err}
u(t)-u_h(t)=:\rho(t)+\theta(t), 
\end{align}
with 
\[\rho(t):= u(t)-\mathcal{R}_hu(t) \text{ and }  \theta(t):= \mathcal{R}_hu(t)-u_h(t).\]
\noindent Let
\begin{align}\label{lambda0} 
\Lambda_0(\epsilon,t) :=\norm[0]{u_0}_{D(A)}+\epsilon^{-1}t^{\alpha\epsilon/2}\norm[0]{f}_{L^{\infty}(0,T;D(A^{\epsilon/2}))},  \; \epsilon\in (0,1), \; t\in[0,T]. 
\end{align}
\noindent  Recall that  $D(A) \subset V \cap H^{2+\gamma^{*}}(\Omega) \subset V \cap H^{2+\gamma_0}(\Omega)$. For the homogeneous problem, using (\ref{bhhomopfn_func_ns}) with  $i=0$, $p=1$, and for the nonhomogeneous problem, applying  \eqref{bhhomopfn_func_s} and \eqref{bhnonhomo_fnest} with $q=\epsilon$ we obtain
\begin{align*}
\|u(t)\|_{H^{2+\gamma_0}(\Omega)} &\lesssim 
\begin{cases}
t^{-\alpha}\norm[0]{u_0} \text{ for }\; u_0\in L^2(\Omega) \text{ and } f=0,\; t\in(0,T],\\
\Lambda_0(\epsilon,t) \text{ for } u_0\in D(A)\text{ and } f\ne 0,\; t\in[0,T].
\end{cases}
\end{align*}
This and Lemma \ref{bhritzprjn_approx} establish the estimates for $\rho(t)$ as given below.  
\begin{align}\label{bhritzprjn_ns_smth}
\norm[0]{\rho(t)} + h^{\gamma_0}\norm[0]{\rho(t)}_h &\lesssim 
\begin{cases}
h^{2\gamma_0}t^{-\alpha}\norm[0]{u_0} \text{ for } u_0\in L^2(\Omega) \text{ and } f=0,\; t\in(0,T],\\
h^{2\gamma_0}\Lambda_0(\epsilon,t) \text{ for }u_0\in D(A) \text{ and } f\ne 0,\; t\in[0,T].
\end{cases}
\end{align}

Hence the main task in the remaining part of this section is to bound $\theta(t)$ in the $L^2(\Omega)$ and energy norms. 
For $t\in[0,T]$ and an $\epsilon \in (0,1)$ (determined by the smoothness of $f$), set  
\begin{align}\label{bh_thetaconstnts_smth}
\begin{aligned}
\Lambda_1(\epsilon,t)&:= t^{\frac{3}{2}}\norm[0]{u_0}_{D(A)}+ t^{\frac{3}{2}-\alpha(1-\frac{\epsilon}{2})}\norm[0]{f(0)}_{D(A^{\frac{\epsilon}{2}})}
+ t\|f\|_{L^2(0,t;L^2(\Omega))} 
\\
&\qquad   +\epsilon^{-1}t^{\frac{5}{2}-\alpha(1-\frac{\epsilon}{2})}\norm[0]{f'}_{L^{\infty}(0,T;D(A^{\frac{\epsilon}{2}}))} \\
\Lambda_2(\epsilon,t)&:= t^{\frac{3}{2}}\norm[0]{u_0}_{D(A)}+B(\frac{\alpha\epsilon}{2},1-\alpha)t^{\frac{3}{2}-\alpha(1-\frac{\epsilon}{2})}\norm[0]{f(0)}_{D(A^{\frac{\epsilon}{2}})}
+ t\|f\|_{L^2(0,t;L^2(\Omega))} \\
&\qquad  +\epsilon^{-1}t^{\frac{5}{2}-\alpha(1-\frac{\epsilon}{2})}\norm[0]{f'}_{L^{\infty}(0,T;D(A^{\frac{\epsilon}{2}}))}  \\
\mathcal{B}_0(\epsilon,t)&:=\norm[0]{u_0}_{D(A)}+ \epsilon^{-1}t^{\frac{\alpha\epsilon}{2}} \|f\|_{W^{1,\infty}(0,T; D(A^{\epsilon/2}))}\\
\mathcal{B}_1 (\epsilon,t) & := \Lambda_1(\epsilon,t)+\Lambda_2(\epsilon,t)+ t^2\|f(t)\| +t\|f\|_{L^2(0,t;L^2(\Omega))}+t^2\|f'\|_{L^2(0,t;L^2(\Omega))}\\
&\qquad 
+ t^{\frac{3}{2}}(\norm[0]{f(0)}+t\norm[0]{f'(0)}) +t^{5/2}\|f''\|_{L^1(0,T;L^2(\Omega))},  
\end{aligned}
\end{align}
where $B(\cdot,\cdot)$ denotes the standard beta function. Note that in the above expression and in the sequel, whenever the norm of the function $f$ is dependent only on the space variable, we denote the dependence of $f$ on $t$ as $f(t)$; if the norm is space-time dependent, the arguments in $f$ are omitted for notational brevity.
%
\begin{theorem}[Estimates for $\theta(t)$]\label{bh_thetafnlest} Let  $u(t)$ and $u_h(t)$ solve \eqref{bhmaineqn} and \eqref{bhsemidisscheme}, respectively. Let $\mathcal{R}_hu(t) $ denote the Ritz projection of $u(t)$ defined in \eqref{bhritzprjn}. 
Then for  $\theta(t) = \mathcal{R}_hu(t)-u_h(t)$,  the estimates in $(i)$-$(ii)$ below hold.
\begin{itemize}
\item[(i)](Nonsmooth initial data)
For $u_0\in L^2(\Omega)$ and $f=0$,
\begin{align*} 
 \norm[0]{\theta(t)}+t^{\alpha/2}\|\theta(t)\|_h&\lesssim  \big(h^{2\gamma_0}t^{-\alpha} +h^2t^{-(1+\alpha)/2}\big)\norm[0]{u_0}, \;\; t\in(0,T].  
\end{align*}

\item[(ii)](Smooth initial data) For  $u_0\in D(A)$, 
$f\in W^{1,\infty}([0,T];D(A^{\epsilon/2}))\cap W^{2,1}(0,T;L^2(\Omega))$, and for all $\epsilon\in (0,1)$,
\begin{align*}
\norm[0]{\theta(t)}+t^{\alpha/2}\|\theta(t)\|_h&\lesssim   h^{2\gamma_0}{\mathcal{B}_0(\epsilon,t) }+h^2t^{\alpha/2-2}\mathcal{B}_1(\epsilon,t), \;\; t\in(0,T],   
\end{align*} 
with {$\mathcal{B}_0(\epsilon,t)$} and $\mathcal{B}_1(\epsilon,t)$ defined in \eqref{bh_thetaconstnts_smth}. 
%
\end{itemize}
\end{theorem}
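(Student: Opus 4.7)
The plan is to derive a single error equation for $\theta$ and then, in both the nonsmooth case (i) and the smooth case (ii), carry out a weighted energy argument inspired by Karaa--Mustapha--Pani.

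\emph{Step 1: Error equation.} Testing the weak formulation \eqref{bhvf} with the admissible function $v = Q\chi_h \in V$, using the Ritz projection identity \eqref{bhritzprjn} to rewrite $a(u, Q\chi_h) = a_h(\mathcal{R}_h u, \chi_h)$, and subtracting the semidiscrete scheme \eqref{bhsemidisscheme} gives, for every $\chi_h \in V_h$,
\begin{equation}\label{plan:erreq}
(\partial_t^{\alpha}\theta, \chi_h) + a_h(\theta, \chi_h) = -(\partial_t^{\alpha}\rho, \chi_h) + \partial_t^{\alpha}(u, \chi_h - Q\chi_h) + (f, Q\chi_h - \chi_h).
\end{equation}
The first term on the right is the Ritz-projection residual; the last two form a consistency/nonconformity contribution that is controlled through the smoother $Q = JI_{\mathrm M}$.

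\emph{Step 2: Weighted energy estimates.} The plan is to test \eqref{plan:erreq} with $\chi_h = \theta(t)$, multiply by $t^j$ with $j \in \{0,1,2\}$ chosen according to the singularity of the continuous data at $t = 0$, and integrate over $(0,t)$. The identity $\partial_t^{\alpha}\theta = \mathcal{I}^{1-\alpha}\theta'$ together with the algebraic identities \eqref{bhFracIntrule}--\eqref{bhminiest2} moves the weight $t^j$ inside the Riemann--Liouville operator. The positivity \eqref{bhpositivt} then delivers a nonnegative principal term from which \eqref{bhfuncteste} extracts the $L^2$-control $t^{\alpha}\|\theta(t)\|^2$, while coercivity \eqref{bhbilinearcoer_cts} produces $\int_0^t s^j\|\theta(s)\|_h^2\,ds$, which in turn yields the $t^{\alpha/2}\|\theta(t)\|_h$ part of the stated bound. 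On the right-hand side, $\partial_t^{\alpha}\rho$ is estimated by applying Lemma \ref{bhritzprjn_approx} to $u'$ together with the regularity bounds of Theorem \ref{bhhomop_reg} (for case (i)) and of Theorem \ref{bhhomop_reg}(ii), Theorem \ref{bhnonhomop_reg} and Remark \ref{regularity_rmk} (for case (ii)); this produces the $h^{2\gamma_0}$ contribution with the correct negative powers of $t$. Lemma \ref{bhcompmorlyestmte} with $v = 0$ gives $\|\chi_h - Q\chi_h\|_{H^s(\mathcal T)} \lesssim h^{2-s}\|\chi_h\|_h$ for $0 \le s \le 2$; coupled with the regularity of $\partial_t^{\alpha}u$ and $f$ in $L^2(\Omega)$, this accounts for the $h^2$ part of the final bound, while the factor $\|\theta\|_h$ it introduces is absorbed into the coercive left-hand side via Young's inequality and the continuity \eqref{bhcontinuity} of $\mathcal{I}^{1-\alpha}$. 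A final invocation of Gronwall's lemma (Lemma \ref{gronwalllem}) closes the estimate and yields the joint bounds for $\|\theta(t)\| + t^{\alpha/2}\|\theta(t)\|_h$ stated in both parts of the theorem.

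\emph{Main obstacle.} The delicate part is the bookkeeping of three time scales: the weight $t^j$, the intrinsic singularity of the continuous solution ($t^{-\alpha}$ for the homogeneous nonsmooth case, $t^{\alpha-1}$ for $u'$ in the smooth nonhomogeneous case), and the extra $t$-powers produced by \eqref{bhFracIntrule}--\eqref{bhminiest2}. Each term on the right of \eqref{plan:erreq} has to be paired with the correct weight so that the resulting time integrals converge and the final $t$-exponents reproduce the powers $t^{-\alpha}$, $t^{-(1+\alpha)/2}$ in (i) and $t^{\alpha/2-2}$ in (ii). A second subtle point is that $\theta(0) = \mathcal{R}_h u_0 - P_h u_0$ is not zero in (ii), and in (i) it is not even classically defined because $u_0 \in L^2(\Omega)\setminus V$; this forces a limiting argument at $t = 0^+$ using the $V^{*}$-continuity of $u$ established in the proof of Theorem \ref{bhwellposedness-thm}.
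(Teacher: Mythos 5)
Your Step 1 reproduces the paper's error equation \eqref{bherreqn} correctly, and the overall philosophy (a weighted energy argument in the spirit of Karaa--Mustapha--Pani) is exactly the one the paper follows. But Step 2 has a gap that would prevent the argument from closing: you propose to test with $\chi_h=\theta(t)$, whereas the pointwise-in-time bounds in the theorem require testing the $t^2$-weighted equation with the \emph{derivative} $\theta_2'(t)$ of $\theta_2(t)=t^2\theta(t)$. The two mechanisms that produce $\norm[0]{\theta(t)}$ and $\norm[0]{\theta(t)}_h$ at a fixed time $t$ are (a) $\int_0^t(\mathcal{I}^{1-\alpha}\theta_2',\theta_2')\,ds\gtrsim t^{-\alpha}\norm[0]{\theta_2(t)}^2$ from \eqref{bhfuncteste}, which needs the derivative in both slots, and (b) $2a_h(\theta_2,\theta_2')=\frac{d}{dt}a_h(\theta_2,\theta_2)$, which integrates to $a_h(\theta_2(t),\theta_2(t))\ge\beta_1\norm[0]{\theta_2(t)}_h^2$. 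Testing with $\theta$ gives only $\int_0^t(\mathcal{I}^{1-\alpha}\theta',\theta)\,ds$ (no sign) and $\int_0^t\norm[0]{\theta}_h^2\,ds$ (no pointwise control). Moreover, once you multiply by $t^2$ and apply \eqref{bhminiest2}, the right-hand side acquires the terms $\mathcal{I}^{1-\alpha}\theta_1$ and $\mathcal{I}^{1-\alpha}\widehat{\theta}$, which are not data: they must themselves be bounded by two further, separate energy arguments --- one on the once-integrated error equation tested with $\widehat{\theta}$ (Lemma \ref{bhlemma1}) and one on the $t$-weighted equation tested with $\theta_1$ (Lemma \ref{bhlemma2}). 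Your plan treats the weights $t^j$, $j=0,1,2$, as parallel instances rather than as a chain in which the $j=2$ estimate consumes the $j=0,1$ estimates as inputs.

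Two further points. First, for nonsmooth data your proposal to estimate $\partial_t^\alpha\rho$ by ``applying Lemma \ref{bhritzprjn_approx} to $u'$'' fails: \eqref{bhhomopfn_func_ns} gives $\norm[0]{u'(s)}_{D(A)}\lesssim s^{-(1+\alpha)}\norm[0]{u_0}$, so the resulting bound on $\norm[0]{\rho'(s)}$ is not integrable near $s=0$; the paper avoids this by using \eqref{bhminiest2} to replace $t^2\mathcal{I}^{1-\alpha}\rho'$ with $\mathcal{I}^{1-\alpha}\lambda$, where $\lambda=-\rho_2'+2\alpha\rho_1+\alpha(1-\alpha)\widehat{\rho}$ involves only $\rho$, $\rho_1$, $\widehat{\rho}$, and $s^2\rho'(s)$, all of which are integrable (the last is $\lesssim h^{2\gamma_0}s^{1-\alpha}\norm[0]{u_0}$). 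A similar device, $\widehat{\varphi}=\mathcal{I}^{1-\alpha}(u-u_0)$ via the semigroup property \eqref{bhsemigroup}, is needed for the $\varphi$ terms. Second, no limiting argument at $t=0^+$ is needed for $\theta(0)$: the weight makes $\theta_2(0)=0$, and the only place initial values enter is through the combination $(\rho(0)+\theta(0),\chi_h)=(u_0-P_hu_0,\chi_h)=0$, which vanishes by the $L^2$-orthogonality of $P_h$; the individual terms $\rho(0)$ and $\theta(0)$ are never needed separately.
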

\noindent A combination of the estimates for $\rho(t)$ from \eqref{bhritzprjn_ns_smth}  and $\theta(t)$ from Theorem \ref{bh_thetafnlest}, shows the error estimates for the semidiscrete scheme for both smooth and nonsmooth initial data in Theorem \ref{bhsemidisctesmooth_finl}. 
\begin{theorem}[Error estimates]\label{bhsemidisctesmooth_finl}
Let $u(t)$ and $u_h(t)$ be solutions of the continuous and semidiscrete problems in \eqref{bhmaineqn} and \eqref{bhsemidisscheme}, respectively. 
\begin{itemize}
\item[(i)] (Nonsmooth initial data) For $u_0\in L^2(\Omega)$, source function $f=0$, $u_h(0)=P_hu_0$, and $t\in(0,T]$, it holds that 
\begin{align*}
\norm[0]{u(t)-u_h(t)}&\lesssim \big( h^{2\gamma_0}t^{-\alpha} + h^2t^{-(1+\alpha)/2}\big)\norm[0]{u_0}, \\  
\norm[0]{u(t)-u_h(t)}_h&\lesssim   h^{\gamma_0}t^{-\alpha}\norm[0]{u_0}+ \big( h^{2\gamma_0}t^{-3\alpha/2} +h^2t^{-(\alpha+1/2)}\big)\norm[0]{u_0}. 
\end{align*} 

\item[(ii)] (Smooth initial data)
For $u_0\in D(A)$, $f\in W^{1,\infty}([0,T];D(A^{\epsilon/2}))\cap W^{2,1}(0,T;L^2(\Omega))$,  $u_h(0)=P_hu_0$, $t\in(0,T]$, and for all $\epsilon \in (0,1)$, it holds that 
\begin{align*}
\norm[0]{u(t)-u_h(t)}&\lesssim   
 h^{2\gamma_0}\Lambda_0(\epsilon,t) +  h^{2\gamma_0}{\mathcal{B}_0(\epsilon,t)}+h^2t^{\alpha/2-2}\mathcal{B}_1(\epsilon,t),   \\
\norm[0]{u(t)-u_h(t)}_h&\lesssim  
 h^{\gamma_0}\Lambda_0(\epsilon,t)+ h^{2\gamma_0}t^{-\alpha/2} {\mathcal{B}_0(\epsilon,t)}+h^2t^{-2}\mathcal{B}_1(\epsilon,t),   
\end{align*} 
where $\Lambda_0(\epsilon,t)$ and 
{$\mathcal{B}_0(\epsilon,t)$}, $\mathcal{B}_1(\epsilon,t)$ are defined, respectively, in \eqref{lambda0} and \eqref{bh_thetaconstnts_smth}.
\end{itemize}
\end{theorem}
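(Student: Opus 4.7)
The plan is that Theorem~\ref{bhsemidisctesmooth_finl} should follow almost directly from the splitting
\begin{equation*}
u(t)-u_h(t)=\rho(t)+\theta(t), \qquad \rho(t):=u(t)-\mathcal{R}_hu(t),\ \ \theta(t):=\mathcal{R}_hu(t)-u_h(t),
\end{equation*}
combined with (a) the Ritz-projection estimate \eqref{bhritzprjn_ns_smth} for $\rho(t)$ and (b) the $\theta$-bounds already established in Theorem~\ref{bh_thetafnlest}. All substantive analytic work — the energy argument with time weights $t^{j}$, the use of Lemmas on Riemann--Liouville integrals \eqref{bhsemigroup}--\eqref{bhfuncteste}, and the approximation properties of the smoother $Q=JI_{\M}$ — has been absorbed into those two inputs, so at this stage the proof is just a triangle-inequality bookkeeping step. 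There is no genuine obstacle; the only thing to watch is the factor $t^{\alpha/2}$ that multiplies $\|\theta(t)\|_h$ in Theorem~\ref{bh_thetafnlest}, which must be divided back out when moving to an energy-norm error bound.

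For case~$(i)$ (nonsmooth data, $f=0$), I would apply the triangle inequality in the $L^2$-norm,
\begin{equation*}
\|u(t)-u_h(t)\|\le\|\rho(t)\|+\|\theta(t)\|,
\end{equation*}
insert the bound $\|\rho(t)\|\lesssim h^{2\gamma_0}t^{-\alpha}\|u_0\|$ from \eqref{bhritzprjn_ns_smth} and the bound $\|\theta(t)\|\lesssim(h^{2\gamma_0}t^{-\alpha}+h^{2}t^{-(1+\alpha)/2})\|u_0\|$ from Theorem~\ref{bh_thetafnlest}$(i)$, and absorb the duplicated $h^{2\gamma_0}t^{-\alpha}$ term. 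For the energy norm, I would use
\begin{equation*}
\|u(t)-u_h(t)\|_h\le\|\rho(t)\|_h+\|\theta(t)\|_h,
\end{equation*}
bound $\|\rho(t)\|_h\lesssim h^{\gamma_0}t^{-\alpha}\|u_0\|$ via \eqref{bhritzprjn_ns_smth} and divide the $\theta$-estimate of Theorem~\ref{bh_thetafnlest}$(i)$ by $t^{\alpha/2}$, obtaining $\|\theta(t)\|_h\lesssim(h^{2\gamma_0}t^{-3\alpha/2}+h^{2}t^{-(\alpha+1/2)})\|u_0\|$, which delivers the stated estimate after addition.

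For case~$(ii)$ (smooth data, $u_0\in D(A)$ and $f$ as specified), the argument is identical in structure. The $L^2$-bound
\begin{equation*}
\|u(t)-u_h(t)\|\le\|\rho(t)\|+\|\theta(t)\|\lesssim h^{2\gamma_0}\Lambda_0(\epsilon,t)+h^{2\gamma_0}\mathcal{B}_0(\epsilon,t)+h^{2}t^{\alpha/2-2}\mathcal{B}_1(\epsilon,t)
\end{equation*}
follows from \eqref{bhritzprjn_ns_smth} (second branch) and Theorem~\ref{bh_thetafnlest}$(ii)$. For the energy norm, combine $\|\rho(t)\|_h\lesssim h^{\gamma_0}\Lambda_0(\epsilon,t)$ with $\|\theta(t)\|_h\le t^{-\alpha/2}\bigl(h^{2\gamma_0}\mathcal{B}_0(\epsilon,t)+h^{2}t^{\alpha/2-2}\mathcal{B}_1(\epsilon,t)\bigr)$, and simplify the powers of $t$ in the second term to $h^{2}t^{-2}\mathcal{B}_1(\epsilon,t)$. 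Since the $u_h(0)=P_h u_0$ hypothesis is already built into the analysis of $\theta$ in Theorem~\ref{bh_thetafnlest} (through $\theta(0)=\mathcal{R}_hu_0-P_hu_0$, whose $L^2$ and energy bounds are controlled by the Ritz-projection approximation property of Lemma~\ref{bhritzprjn_approx}), nothing new needs to be invoked, and the proof of Theorem~\ref{bhsemidisctesmooth_finl} is complete.
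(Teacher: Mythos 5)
Your proposal is correct and coincides with the paper's own proof: the paper likewise obtains both cases by combining the splitting $u-u_h=\rho+\theta$ with the triangle inequality, the bound \eqref{bhritzprjn_ns_smth} for $\rho$, and Theorem~\ref{bh_thetafnlest} for $\theta$, dividing out the $t^{\alpha/2}$ weight for the energy-norm estimate exactly as you describe. No gaps.
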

\begin{proof}
The error decomposition (\ref{bhsemidissplit_err}), the triangle inequality, the first estimate in (\ref{bhritzprjn_ns_smth}), and Theorem \ref{bh_thetafnlest} $(i)$ lead to the proof of $(i)$. The proof of  $(ii)$ follows from  (\ref{bhsemidissplit_err}), the triangle inequality, the second estimate in (\ref{bhritzprjn_ns_smth}), and Theorem \ref{bh_thetafnlest} $(ii)$. 
\end{proof}
\subsection{Proof of Theorem \ref{bh_thetafnlest}}\label{bhproofsec}
  First of all, a key inequality is proved in Lemma~\ref{keylem_bhthetafinlthm} and bounds for the terms appearing in this lemma are established in Lemmas~\ref{bhlemma1}--\ref{bhlemma4}. A combination of these results establishes the proof of Theorem \ref{bh_thetafnlest}.
  
The following notations hold throughout this subsection.
\begin{align*}
\begin{aligned}
\theta_1(t)&:=t\theta(t),\; \theta_2(t):=t^{2}\theta(t),\;\; \text{and} \\
\widehat{v}(t)&:=\mathcal{I}v(t)=\int_{0}^{t}v(s)\,ds, \; \widehat{\widehat{v}}(t):=\mathcal{I}^2v(t)=\int_{0}^{t}\int_{0}^{s}v(\tau)\,d\tau\,ds.
\end{aligned}
\end{align*} 
Recall the smoother $Q$ from Subsection~\ref{ritz}. {For each $\chi_h\in V_h$, test (\ref{bhvf}) with $Q\chi_h \in V$  and subtract (\ref{bhsemidisscheme}) from (\ref{bhvf}) to obtain}
\begin{align*}
(\partial_t^{\alpha}u(t), Q\chi_h)-(\partial_t^{\alpha}u_h(t),\chi_h)+(a(u(t),Q\chi_h)-a_h(u_h(t),\chi_h))=(f(t), (Q-I)\chi_h).
\end{align*}
Add and subtract  $(\partial_t^{\alpha}u(t),\chi_h)$, utilize  (\ref{bhritzprjn}) and $\theta(t):= R_h u(t) -u_h(t)$ on the left-hand side of the above expression to obtain  
\begin{align*}
(\partial_t^{\alpha}(u(t)-u_h(t)),\chi_h)+ (\partial_t^{\alpha}u(t), (Q-I)\chi_h)+a_h(\theta(t),\chi_h)=(f(t), (Q-I)\chi_h).
\end{align*} 
Recall $u(t)-u_h(t)= \rho(t)+ \theta(t)$ and the notation from  \eqref{caputoderiv}, that yields $\partial_t^{\alpha}\theta(t)= \mathcal{I}^{1-\alpha}\theta'(t)$, $\partial_t^{\alpha}\rho(t) =\mathcal{I}^{1-\alpha}\rho'(t)$,   and $\partial_t^{\alpha}u(t) =\mathcal{I}^{1-\alpha}u'(t)$. For ease of notation in the subsequent estimates, we denote $\varphi(t):=\mathcal{I}^{1-\alpha}u'(t)$. Then for all $\chi_h \in V_h$ and $t\in(0,T]$, the above displayed identity leads to the error equation in $\theta(t)$ as
\begin{align}\label{bherreqn}
(\mathcal{I}^{1-\alpha}\theta'(t),\chi_h)+a_h(\theta(t),\chi_h)= -(\mathcal{I}^{1-\alpha}\rho'(t),\chi_h)- (\varphi(t),(Q-I)\chi_h)+(f(t),(Q-I)\chi_h).
\end{align}

\noindent Recall the notations 
\begin{align*}
& \theta_i(t)=t^i \theta(t), \; \rho_i(t)= t^i \rho(t) \text{ for }i=1,2; 
\; \varphi_2(t)=t^2 \varphi(t), \; f_2(t)=t^2 f(t), 
\widehat{\theta}(t)=\int_0^t \theta(s) \,ds,  
\\
& \widehat{\rho}(t)=\int_0^t \rho(s) \,ds, \; \rho_2'(t) =(t^2 \rho(t))', \;
\varphi_2'(t) =(t^2 \varphi(t))', \;
f_2'(t) =(t^2 f(t))', \text{ and } \\
&\lambda(t)=-\rho_2'(t)+2\alpha\rho_1(t)+\alpha(1-\alpha)\widehat{\rho}(t).
\end{align*}
\begin{lemma}[Key inequality]\label{keylem_bhthetafinlthm}
If $\theta$ satisfies (\ref{bherreqn}),  then for time $t\in(0,T]$, it holds that 
\begin{align*}
&\norm[0]{\theta(t)}^2+t^{\alpha}\norm[0]{\theta(t)}_h^2 \lesssim t^{\alpha-4}\int_{0}^{t}\Big( (\mathcal{I}^{1-\alpha}\widehat{\theta},\widehat{\theta})\,ds+(\mathcal{I}^{1-\alpha}\theta_1,\theta_1)+ (\mathcal{I}^{1-\alpha}\lambda,\lambda)\Big)\,ds\\
&+h^4t^{\alpha-4}\Big( \|\varphi_2(t)\|^2 +\int_{0}^{t}(\|\varphi_2(s)\|^2 +\|\varphi_2'(s)\|^2)\,ds+  \|f_2(t)\|^2 +\int_{0}^{t}(\|f_2(s)\|^2 + \|f_2'(s)\|^2)\,ds  \Big).
\end{align*}
\end{lemma}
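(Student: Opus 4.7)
The plan is to work with a \emph{weighted} error equation in terms of $\theta_2=t^2\theta$. Multiplying \eqref{bherreqn} by $t^2$ and applying the identity \eqref{bhminiest2} (with $\alpha$ replaced by $1-\alpha$) to the scalar time functions $t\mapsto(\theta(t),\chi_h)$ and $t\mapsto(\rho(t),\chi_h)$, together with the semigroup property $\mathcal{I}^{2-\alpha}=\mathcal{I}^{1-\alpha}\circ\mathcal{I}$ from \eqref{bhsemigroup}, I will transform the weighted Caputo-like terms into $(\mathcal{I}^{1-\alpha}\theta_2',\chi_h)$, $(\mathcal{I}^{1-\alpha}\theta_1,\chi_h)$, $(\mathcal{I}^{1-\alpha}\widehat{\theta},\chi_h)$ and an analogous triple for $\rho$; the latter collapses into $(\mathcal{I}^{1-\alpha}\lambda,\chi_h)$ with the $\lambda$ of the statement. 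The singular initial traces combine into $-t^2\kappa_{1-\alpha}(t)(\theta(0)+\rho(0),\chi_h)=-t^2\kappa_{1-\alpha}(t)(u_0-P_h u_0,\chi_h)$, which vanishes for $\chi_h\in V_h$ by the defining $L^2$-orthogonality of $P_h$ combined with the initial prescription $u_h(0)=P_h u_0$. This produces the reformulated identity
\begin{equation*}
(\mathcal{I}^{1-\alpha}\theta_2',\chi_h)+a_h(\theta_2,\chi_h)=2\alpha(\mathcal{I}^{1-\alpha}\theta_1,\chi_h)+\alpha(1-\alpha)(\mathcal{I}^{1-\alpha}\widehat{\theta},\chi_h)+(\mathcal{I}^{1-\alpha}\lambda,\chi_h)-(\varphi_2,(Q-I)\chi_h)+(f_2,(Q-I)\chi_h).
\end{equation*}

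Next, I will test with $\chi_h=\theta_2'(t)$ and integrate over $(0,t)$. Because $\theta_2(0)=0$, the bilinearity of $a_h$ and \eqref{bhbilinearcoer_cts} give $\int_0^t a_h(\theta_2,\theta_2')\,ds=\tfrac{1}{2}a_h(\theta_2(t),\theta_2(t))\gtrsim\|\theta_2(t)\|_h^2$, while \eqref{bhpositivt} yields $\int_0^t(\mathcal{I}^{1-\alpha}\theta_2',\theta_2')\,ds\ge 0$. For each of the three $\mathcal{I}^{1-\alpha}$-inner products $\int_0^t(\mathcal{I}^{1-\alpha}\phi,\theta_2')\,ds$ with $\phi\in\{\theta_1,\widehat{\theta},\lambda\}$, the continuity \eqref{bhcontinuity} with a small $\vartheta>0$ splits the integral into a $\vartheta$-multiple of $\int_0^t(\mathcal{I}^{1-\alpha}\theta_2',\theta_2')\,ds$ (absorbed into the LHS by positivity) and exactly the three integrals $\int_0^t(\mathcal{I}^{1-\alpha}\widehat{\theta},\widehat{\theta})\,ds$, $\int_0^t(\mathcal{I}^{1-\alpha}\theta_1,\theta_1)\,ds$, $\int_0^t(\mathcal{I}^{1-\alpha}\lambda,\lambda)\,ds$ appearing in the statement. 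The data-driven pieces $\int_0^t(\varphi_2,(Q-I)\theta_2')\,ds$ and $\int_0^t(f_2,(Q-I)\theta_2')\,ds$ are integrated by parts in time; since $\theta_2(0)=\varphi_2(0)=f_2(0)=0$, only a pointwise boundary value at $s=t$ plus an interior piece $-\int_0^t(\varphi_2',(Q-I)\theta_2)\,ds$ (and its $f_2$-analogue) survive. The smoother estimate $\|(Q-I)\chi_h\|\le Ch^2\|\chi_h\|_h$ from Lemma \ref{bhcompmorlyestmte} applied on $V_h\subset P_2(\T)$, together with Young's inequality, controls the boundary contribution by $\epsilon\|\theta_2(t)\|_h^2+C_\epsilon h^4\|\varphi_2(t)\|^2$ (with the $\epsilon$-term absorbed into the coercive LHS) and the interior contribution by $Ch^4\int_0^t\|\varphi_2'\|^2\,ds+\tfrac{1}{2}\int_0^t\|\theta_2\|_h^2\,ds$; the same structure handles $f_2$.

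At this point I will invoke Gronwall's Lemma \ref{gronwalllem} with $\chi\equiv\tfrac{1}{2}$ applied to $\phi(t)=\|\theta_2(t)\|_h^2$ to absorb the residual $\int_0^t\|\theta_2\|_h^2\,ds$; a key bookkeeping point is that the $\int_0^t\psi(s)\,ds$ generated by Gronwall, whose integrand $\psi$ already contains the pointwise traces $\|\varphi_2(s)\|^2$ and $\|f_2(s)\|^2$, produces precisely the contributions $\int_0^t(\|\varphi_2\|^2+\|f_2\|^2)\,ds$ in the statement. This yields $\|\theta_2(t)\|_h^2+\int_0^t(\mathcal{I}^{1-\alpha}\theta_2',\theta_2')\,ds\lesssim t^{4-\alpha}\cdot[\text{RHS of the lemma}]$. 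The $L^2$ bound on $\theta_2(t)$ then comes from \eqref{bhfuncteste} applied with $v=\theta_2$, using $\theta_2(0)=0$, giving $\|\theta_2(t)\|^2\lesssim t^\alpha\int_0^t(\mathcal{I}^{1-\alpha}\theta_2',\theta_2')\,ds$. Dividing the first estimate by $t^4$ (with the factor $t^\alpha$ attached) and the $L^2$ estimate by $t^4$, and adding, produces the stated inequality. The principal obstacle I anticipate is the delicate closure described above: ensuring that the coefficient of $\int_0^t\|\theta_2\|_h^2\,ds$ emerging from the time integration by parts is compatible with Gronwall, and tracking that the initial contribution $t^2\kappa_{1-\alpha}(t)(u_0-P_h u_0,\chi_h)$ cancels exactly, which tightly binds the argument to the choice $u_h(0)=P_h u_0$.
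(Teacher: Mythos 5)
Your proposal follows essentially the same route as the paper: multiply \eqref{bherreqn} by $t^2$, apply \eqref{bhminiest2} to both the $\theta$- and $\rho$-terms so the initial traces cancel via $u_h(0)=P_hu_0$, test with $\theta_2'$, integrate in time, absorb via \eqref{bhcontinuity}, integrate the $(Q-I)$-terms by parts with Lemma \ref{bhcompmorlyestmte} and Young, and close with Gronwall and \eqref{bhfuncteste}. The only cosmetic difference is that the paper inserts \eqref{bhfuncteste} before applying Gronwall (so that $t^{-\alpha}\|\theta_2(t)\|^2$ sits on the left throughout) rather than after, which does not change the argument.
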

\begin{remark}
    Note the dependency of the variables $\widehat{\theta}, \widehat{\theta_1}, \lambda$, etc. on the time variable is suppressed in the above and in the sequel for notational ease when there is no chance of confusion.
\end{remark}
\begin{proof} 
To prove the assertion, we first show that $\theta_2\in W^{1,1}(0,T;V_h)$ and $\varphi_2\in W^{1,1}(0,T;L^2(\Omega))$. For nonsmooth initial data, from \eqref{bhhomopfn_func_ns}, it follows that 
$$
\|u'(t)\|_{V}= \|u'(t)\|_{D(A^{1/2})}\lesssim 
t^{-(1+\alpha/2)}\|u_0\|.
$$
From which, we deduce that 
$$
\|{\mathcal R}_hu'(t)\|_h\lesssim 
t^{-(1+\alpha/2)}\|u_0\|, 
$$
and that $t\mapsto t^2 {\mathcal R}_hu(t)$ belongs to $W^{1,1}(0,T;V_h)$. We already know that $u_h$ belongs to $W^{1,1}(0,T;V_h)$. Thus $\theta_2$ also belongs to $W^{1,1}(0,T;V_h)$. With \eqref{caputo_nsest}, we have
$$
\|\varphi(t)\|\lesssim t^{-\alpha}\|u_0\|. 
$$
Thus $\varphi_2$ belongs to $W^{1,1}(0,T;L^2(\Omega))$. A similar argument holds for smooth initial data.

Multiply both sides of (\ref{bherreqn}) by $t^2$, apply (\ref{bhminiest2}) (twice) to rewrite both $t^2 \mathcal{I}^{1-\alpha}\theta'$ and $t^2 \mathcal{I}^{1-\alpha}\rho'$, 
utilize $(\rho(0)+\theta(0),\chi_h)=0$ 
from the second identity in \eqref{bhsemidisscheme},
and the definition of  $\lambda$ 
to obtain 
\begin{align*}
\begin{aligned}
{ (\mathcal{I}^{1-\alpha}\theta_2',\chi_h)+a_h(\theta_2,\chi_h)}&= 2\alpha(\mathcal{I}^{1-\alpha}\theta_1,\chi_h)+\alpha(1-\alpha)(\mathcal{I}^{1-\alpha}\widehat{\theta},\chi_h) + (\mathcal{I}^{1-\alpha}\lambda,\chi_h)\\
&\qquad -(\varphi_2,(Q-I)\chi_h)+(f_2,(Q-I)\chi_h)  \text{ for all } \chi_h \in V_h.
\end{aligned}
\end{align*} 
Substitute $\chi_h=\theta_2'(t)$ in the last displayed equation,  integrate over $(0,t)$, and apply  (\ref{bhcontinuity}) for the first three terms on the right-hand side with the choice of $\vartheta$ as $\frac{1}{2(2\alpha+\alpha(1-\alpha)+1)}$ to obtain 
\begin{align*}
\begin{aligned}
&\int_{0}^{t}(\mathcal{I}^{1-\alpha}\theta_2',\theta_2')\,ds +\int_{0}^{t}a_h(\theta_2,\theta_2')\,ds \le \frac{1}{2}\int_{0}^{t}(\mathcal{I}^{1-\alpha}\theta_2',\theta_2')\,ds + C\int_{0}^{t}\Big( (\mathcal{I}^{1-\alpha}\widehat{\theta},\widehat{\theta})\\
&+(\mathcal{I}^{1-\alpha}\theta_1,\theta_1)+ (\mathcal{I}^{1-\alpha}\lambda,\lambda)\Big)\,ds-\int_{0}^{t}(\varphi_2,(Q-I)\theta_2')\,ds+\int_{0}^{t}(f_2,(Q-I)\theta_2')\,ds.
\end{aligned}
\end{align*}
The constant $C$ in the above inequality depends on $\alpha$.
The symmetry of $a_h(\cdot,\cdot)$ from {\bf(H)} shows $2a_h(\theta_2(t),\theta_2'(t))=\frac{d}{dt}a_h(\theta_2(t),\theta_2(t))$. This with   \eqref{bhbilinearcoer_cts},  (\ref{bhfuncteste}), and $\theta_2(0)=0$ on the left-hand side of the above inequality establishes
\begin{align}\label{bhkeylemma_est}
t^{-\alpha}\norm[0]{\theta_2(t)}^2+\beta_1\norm[0]{\theta_2(t)}_h^2 &\lesssim  \int_{0}^{t}\Big( (\mathcal{I}^{1-\alpha}\widehat{\theta},\widehat{\theta})+(\mathcal{I}^{1-\alpha}\theta_1,\theta_1)+ (\mathcal{I}^{1-\alpha}\lambda,\lambda)\Big)\,ds \nonumber \\
& \qquad +|\int_{0}^{t}(\varphi_2,(Q-I)\theta_2')\,ds|+|\int_{0}^{t}(f_2,(Q-I)\theta_2')\,ds|. 
\end{align}
Now, the task is to bound the last two terms on the right-hand side of \eqref{bhkeylemma_est}.
Since $\theta_2$ and $\varphi_2$ belong to $W^{1,1}(0,T;L^2(\Omega))$, with an integration by parts, we have
\begin{align*}
T_1:= \int_{0}^{t}(\varphi_2(s),(Q-I)\theta_2'(s))\,ds= (\varphi_2(t),(Q-I)\theta_2(t))-\int_{0}^{t}(\varphi_2'(s),(Q-I)\theta_2(s))\,ds.
\end{align*}
The H\"older inequality and Lemma \ref{bhcompmorlyestmte}
show
\begin{align*}
\begin{aligned}
|T_1|&\le C_1 h^2 \big( \norm[0]{\varphi_2(t)} \|\theta_2(t)\|_h + \int_{0}^{t}\norm[0]{\varphi_2'(s)} \|\theta_2(s)\|_h\,ds \big) \\
&\le \frac{\beta_1}{4}\|\theta_2(t)\|_h^2 + C_1^2\beta_1^{-1} h^4\norm[0]{\varphi_2(t)}^2 + \frac{1}{2}\int_{0}^{t}\|\theta_2(s)\|_h^2\,ds + \frac{C_1^2h^4}{2}\int_{0}^{t}\norm[0]{\varphi_2'(s)}^2\,ds,
\end{aligned}
\end{align*}
with an application of Young's inequality in the last step above.
A similar approach is applied to bound the term $T_2:=\int_{0}^{t}(f_2,(Q-I)\theta_2')\,ds $ and  leads to
\begin{align*}
\begin{aligned}
|T_2|&\le C_1h^2\big(\norm[0]{f_2(t)} \|\theta_2(t)\|_h + \int_{0}^{t}\norm[0]{f_2'(s)}\|\theta_2(s)\|_h\,ds \big)\\
&\le \frac{\beta_1}{4}\|\theta_2(t)\|_h^2 + C_1^2\beta_1^{-1}h^4\norm[0]{f_2(t)}^2 + \frac{1}{{2}}\int_{0}^{t}\|\theta_2(s)\|_h^2\,ds + \frac{C_1^2h^4}{2}\int_{0}^{t}\norm[0]{f_2'(s)}^2\,ds. 
\end{aligned}
\end{align*}
Substitute these bounds of $|T_1|$ and $|T_2|$ in (\ref{bhkeylemma_est}) to obtain
\begin{align*}
&t^{-\alpha}\norm[0]{\theta_2(t)}^2+\beta_1\norm[0]{\theta_2(t)}_h^2 \lesssim \int_{0}^{t}\Big( (\mathcal{I}^{1-\alpha}\widehat{\theta},\widehat{\theta})+(\mathcal{I}^{1-\alpha}\theta_1,\theta_1)+ (\mathcal{I}^{1-\alpha}\lambda,\lambda)\Big)\,ds \nonumber \\
&+h^4\Big( \|\varphi_2(t)\|^2 +\int_{0}^{t}\|\varphi_2'(s)\|^2\,ds+  \|f_2(t)\|^2 +\int_{0}^{t}\|f_2'(s)\|^2\,ds  \Big)+\int_{0}^{t}\|\theta_2(s)\|_h^2\,ds. 
\end{align*}
Apply Gronwall's lemma in Lemma~\ref{gronwalllem} with 
\begin{align*}
& \phi(t)=t^{-\alpha}\norm[0]{\theta_2(t)}^2+\beta_1\norm[0]{\theta_2(t)}_h^2, \; \chi(t)=C, \text{ and }\\
&\psi(t)= C\int_{0}^{t}\big( (\mathcal{I}^{1-\alpha}\widehat{\theta},\widehat{\theta})+(\mathcal{I}^{1-\alpha}\theta_1,\theta_1)+ (\mathcal{I}^{1-\alpha}\lambda,\lambda)\big)\,ds\\
&\qquad \qquad + Ch^4\big( \|\varphi_2(t)\|^2 +\int_{0}^{t}\|\varphi_2'(s)\|^2\,ds+  \|f_2(t)\|^2 +\int_{0}^{t}\|f_2'(s)\|^2\,ds  \big),
\end{align*}
utilize \eqref{bhpositivt}, $\displaystyle \int_{0}^{t}\int_{0}^{s}g(\tau)\,d\tau\,ds\lesssim \int_{0}^{t}g(s)\,ds$ for $\displaystyle \int_{0}^{t}g(s)\,ds>0$ ($g(t)\geq 0$)  and $t\in(0,T]$ for the terms 
corresponding to a double integral in time, and recall $\theta_2(t)=t^2\theta(t)$ to conclude the proof.
\end{proof}
\noindent Lemmas \ref{bhlemma1}-\ref{bhlemma4} bound each term on the right-hand side of the estimate in Lemma \ref{keylem_bhthetafinlthm}. 

\noindent The notations $\displaystyle \widehat{\upsilon}(t)=\int_{0}^{t} \upsilon(s)\,ds$ for the choices $\upsilon=\theta,\rho,\varphi, f$ and $\displaystyle \widehat{\widehat{\upsilon}}(t)=\int_{0}^{t}\int_{0}^{s} \upsilon (\tau)\,d\tau\,ds$ for the choices $\upsilon=\theta,\varphi, f$ are used in the next lemma.
\begin{lemma}[Estimate for $\int_{0}^{t}(\mathcal{I}^{1-\alpha}\widehat{\theta},\widehat{\theta})\,ds$]\label{bhlemma1}
For $t\in(0,T]$  and $\theta$ satisfying  (\ref{bherreqn}), the bounds stated below hold. 
\begin{align*}
\int_{0}^{t}(\mathcal{I}^{1-\alpha}\widehat{\theta},\widehat{\theta})\,ds+\|\widehat{\widehat{\theta}}(t)\|_h^{2}\lesssim  t^{4-\alpha}
\Big(h^{2\gamma_0}t^{-\alpha} + h^2t^{-(\alpha+1)/2}\Big)^2 \norm[0]{u_0}^2, 
\end{align*}
if $u_0\in L^2(\Omega)$ and $f=0$, and 
\begin{align*}
\int_{0}^{t}(\mathcal{I}^{1-\alpha}\widehat{\theta},\widehat{\theta})\,ds+\|\widehat{\widehat{\theta}}(t)\|_h^{2}\lesssim   t^{4-\alpha} \Big(h^{2\gamma_0}{\Lambda_0(\epsilon,t)} +h^2t^{\alpha/2-2}{\Lambda_1(\epsilon,t) }\Big)^2,    
\end{align*}
for all $ \epsilon \in (0,1)$, if $u_0\in D(A)$ and $f$ is such that  $\Lambda_0(\epsilon,t)$ and {$\Lambda_1(\epsilon,t)$}, defined in \eqref{lambda0} and \eqref{bh_thetaconstnts_smth}, are finite.
\end{lemma}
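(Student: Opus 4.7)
The plan is to integrate the error equation \eqref{bherreqn} twice in time over $(0,t)$ and then run an energy argument in the doubly integrated variable $\widehat{\widehat{\theta}}$. Using the semigroup property \eqref{bhsemigroup} together with $\mathcal{I}\mathcal{I}^{1-\alpha}\theta'(t)=\mathcal{I}^{1-\alpha}\theta(t)-\theta(0)\kappa_{2-\alpha}(t)$, and the analogous identity for $\rho'$, the boundary contributions at $s=0$ combine into $(\rho(0)+\theta(0),\chi_h)\kappa_{2-\alpha}(t)$, which vanishes for every $\chi_h\in V_h$ since $u_h(0)=P_hu_0$ implies $\rho(0)+\theta(0)=u_0-P_hu_0\perp V_h$. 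A second integration then yields, for all $\chi_h\in V_h$ and $t\in(0,T]$,
\begin{equation*}
(\mathcal{I}^{1-\alpha}\widehat{\theta}(t),\chi_h)+a_h(\widehat{\widehat{\theta}}(t),\chi_h)=-(\mathcal{I}^{1-\alpha}\widehat{\rho}(t),\chi_h)-(\widehat{\widehat{\varphi}}(t),(Q-I)\chi_h)+(\widehat{\widehat{f}}(t),(Q-I)\chi_h).
\end{equation*}

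Next, I would test this identity with $\chi_h=\widehat{\theta}(s)=(\widehat{\widehat{\theta}})'(s)$ and integrate over $(0,t)$. The symmetry of $a_h$ combined with $\widehat{\widehat{\theta}}(0)=0$ and \eqref{bhbilinearcoer_cts} delivers $\tfrac{\beta_1}{2}\|\widehat{\widehat{\theta}}(t)\|_h^2$, while $\int_0^t(\mathcal{I}^{1-\alpha}\widehat{\theta},\widehat{\theta})\,ds$ is nonnegative by \eqref{bhpositivt}. The $\widehat{\rho}$-term on the right is handled by \eqref{bhcontinuity} with $\vartheta$ small enough that a fraction of $\int_0^t(\mathcal{I}^{1-\alpha}\widehat{\theta},\widehat{\theta})\,ds$ is absorbed into the left-hand side, leaving $\int_0^t(\mathcal{I}^{1-\alpha}\widehat{\rho},\widehat{\rho})\,ds$. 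The two $(Q-I)$-terms are treated by integrating by parts in time (the boundary contribution at $s=0$ vanishes because $\widehat{\widehat{\theta}}(0)=0$), after which Lemma \ref{bhcompmorlyestmte} and Young's inequality absorb $\|\widehat{\widehat{\theta}}(t)\|_h^2$-contributions back into the left-hand side, with a residual $\int_0^t\|\widehat{\widehat{\theta}}(s)\|_h^2\,ds$ removed by the Gronwall inequality in Lemma \ref{gronwalllem}. This bounds the left-hand side by $\int_0^t(\mathcal{I}^{1-\alpha}\widehat{\rho},\widehat{\rho})\,ds$, $h^4\|\widehat{\widehat{\varphi}}(t)\|^2$, $h^4\int_0^t\|\widehat{\varphi}\|^2\,ds$, and the analogous $f$-quantities.

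To finish, I would insert explicit bounds for the data terms. In the nonsmooth case, \eqref{bhritzprjn_ns_smth} gives $\|\widehat{\rho}(s)\|\lesssim h^{2\gamma_0}s^{1-\alpha}\|u_0\|$ and a beta-function computation yields $\int_0^t(\mathcal{I}^{1-\alpha}\widehat{\rho},\widehat{\rho})\,ds\lesssim h^{4\gamma_0}t^{4-3\alpha}\|u_0\|^2=t^{4-\alpha}(h^{2\gamma_0}t^{-\alpha})^2\|u_0\|^2$. The bound $\|\varphi(s)\|\lesssim s^{-\alpha}\|u_0\|$ from \eqref{caputo_nsest} produces $\|\widehat{\widehat{\varphi}}(t)\|\lesssim t^{2-\alpha}\|u_0\|$ and $\int_0^t\|\widehat{\varphi}\|^2\,ds\lesssim t^{3-2\alpha}\|u_0\|^2$, matching the $h^2t^{-(\alpha+1)/2}$ contribution up to a $T$-dependent factor. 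The smooth case proceeds analogously: Theorems \ref{bhhomop_reg}(ii) and \ref{bhnonhomop_reg} together with \eqref{caputo_sest}--\eqref{caputo_estnhprblm} bound $\|\widehat{\widehat{\varphi}}(t)\|$, $\int_0^t\|\widehat{\varphi}\|^2\,ds$ and the $f$-quantities so as to reproduce the four summands of $\Lambda_1(\epsilon,t)$ defined in \eqref{bh_thetaconstnts_smth}, while $\Lambda_0(\epsilon,t)$ enters through $\widehat{\rho}$ together with \eqref{bhritzprjn_ns_smth}.

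The main obstacle I anticipate is the bookkeeping in the smooth case: cleanly bounding $\|\widehat{\widehat{\varphi}}\|$ and $\int_0^t\|\widehat{\varphi}\|^2\,ds$ requires isolating the contributions of $f(0)$ and $f'(0)$ in the convolution defining $\varphi=\partial_t^{\alpha}u$, and matching each piece to a summand of $\Lambda_0$ or $\Lambda_1$ with the correct power of $t$ and dependence on $\epsilon$.
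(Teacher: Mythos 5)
Your proposal is correct and follows essentially the same route as the paper: integrate the error equation twice in time (using $(\rho(0)+\theta(0),\chi_h)=0$ to kill the $\kappa_{2-\alpha}$ boundary terms), test with $\widehat{\theta}$, absorb the $\widehat{\rho}$-term via \eqref{bhcontinuity}, handle the $(Q-I)$-terms by integration by parts with Lemma \ref{bhcompmorlyestmte} and Young, apply Gronwall, and then insert the data bounds from \eqref{bhritzprjn_ns_smth} and the regularity theorems. The only (harmless) deviation is in the nonsmooth case, where you bound $\|\varphi(s)\|\lesssim s^{-\alpha}\|u_0\|$ directly from \eqref{caputo_nsest} and integrate, whereas the paper instead rewrites $\widehat{\varphi}=\mathcal{I}^{1-\alpha}(u-u_0)$ and $\widehat{\widehat{\varphi}}=\mathcal{I}^{2-\alpha}(u-u_0)$ via \eqref{bhsemigroup}; both yield the same powers of $t$.
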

\begin{proof}
 The  proof is split into four steps. The first step derives an intermediate bound, and \textit{Steps 2-4} estimate the terms derived in \textit{Step 1}.

\noindent\textit{Step 1 (An intermediate bound).} 
The definition of $\mathcal{I}^{\beta}(\cdot)$ in \eqref{rlfracintegral} and \eqref{bhsemigroup} establish 
$$\mathcal{I}^{2-\alpha}v'(t)=\mathcal{I}^{1-\alpha}\mathcal{I}(v'(t))=\mathcal{I}^{1-\alpha}(v(t)-v_0) =\mathcal{I}^{1-\alpha}v(t)-\kappa_{2-\alpha}(t)v(0).$$
Integrate (\ref{bherreqn}) over $(0,t)$, utilize the above identity twice, and  $(\rho(0)+\theta(0),\chi_h)=0$ for all $\chi_h \in V_h$ from the second identity in \eqref{bhsemidisscheme} to obtain 
\begin{align*}
(\mathcal{I}^{1-\alpha}\theta,\chi_h)+a_h(\widehat{\theta},\chi_h)&= -(\mathcal{I}^{1-\alpha}\rho,\chi_h)-(\widehat{\varphi},(Q-I)\chi_h)+(\widehat{f},(Q-I)\chi_h) \text{ for all } \chi_h\in V_h.  
\end{align*}
Integrate the above identity in time from $0$ to $t$ and choose $\chi_h=\widehat{\theta}(t)$ to establish
\begin{align*}
\begin{aligned}
(\mathcal{I}^{1-\alpha}\widehat{\theta},\widehat{\theta})+a_h(\widehat{\widehat{\theta}},\widehat{\theta})&=-(\mathcal{I}^{1-\alpha}\widehat{\rho},\widehat{\theta})-(\widehat{\widehat{\varphi}},(Q-I)\widehat{\theta})+(\widehat{\widehat{f}},(Q-I)\widehat{\theta}).
\end{aligned}
\end{align*}  
Integrate the above identity once again in time from $0$ to $t$, apply (\ref{bhcontinuity}) with $\vartheta=1/2$ to $\int_{0}^{t}(\mathcal{I}^{1-\alpha}(-\widehat{\rho}),\widehat{\theta})\,ds$,  utilize $2a_h(\widehat{\widehat{\theta}},\widehat{\theta})=\frac{d}{dt}a_h(\widehat{\widehat{\theta}},\widehat{\widehat{\theta}})$, and \eqref{bhbilinearcoer_cts} to obtain  
\begin{align*}
\begin{aligned}
\int_{0}^{t}(\mathcal{I}^{1-\alpha}\widehat{\theta},\widehat{\theta})\,ds+\beta_1\|\widehat{\widehat{\theta}}\|_h^2&\lesssim \int_{0}^{t}(\mathcal{I}^{1-\alpha}\widehat{\rho},\widehat{\rho})\,ds\\
&+\Big|\int_{0}^{t}(\widehat{\widehat{\varphi}},(Q-I)\widehat{\theta})\,ds\Big|
+\Big|\int_{0}^{t}(\widehat{\widehat{f}},(Q-I)\widehat{\theta})\,ds\Big|.
\end{aligned}
\end{align*}
{The bounds for the terms $\big|\int_{0}^{t}(\widehat{\widehat{\varphi}},(Q-I)\widehat{\theta})\,ds\big|$ and $\big|\int_{0}^{t}(\widehat{\widehat{f}},(Q-I)\widehat{\theta})\,ds\big|$ can be established in an analogous way following the steps of the derivation of the bounds for $|T_1|$ and $|T_2|$ in Lemma \ref{keylem_bhthetafinlthm}. In this case, we obtain
\begin{align}\label{eqn:bound-22}
 \int_{0}^{t}(\mathcal{I}^{1-\alpha}\widehat{\theta},\widehat{\theta})\,ds+\|\widehat{\widehat{\theta}}(t)\|_h^{2} & \lesssim 
\int_{0}^{t}(\mathcal{I}^{1-\alpha}\widehat{\rho},\widehat{\rho})\,ds + h^4 \Big( \|\widehat{\widehat{\varphi}}(t)\|^2 +\int_{0}^{t}\|\widehat{\varphi}(s)\|^2 \,ds \nonumber \\
& \quad+ \|\widehat{\widehat{f}}(t)\|^2 +\int_{0}^{t}\|\widehat{f}(s)\|^2\,ds  \Big)
+ \int_{0}^{t} \|\widehat{\widehat{\theta}}(s)\|_h^{2}\,ds.
\end{align}
Apply Gronwall's lemma in \eqref{eqn:bound-22} to obtain   \begin{align}\label{eqn:bound}
 \int_{0}^{t}(\mathcal{I}^{1-\alpha}\widehat{\theta},\widehat{\theta})\,ds+\|\widehat{\widehat{\theta}}(t)\|_h^{2} & \lesssim 
\int_{0}^{t}(\mathcal{I}^{1-\alpha}\widehat{\rho},\widehat{\rho})\,ds + h^4 \Big( \|\widehat{\widehat{\varphi}}(t)\|^2 +\int_{0}^{t}(\|\widehat{\varphi}(s)\|^2+ { \|\widehat{\widehat{\varphi}}(s)\|^2) }\,ds \nonumber \\
& \quad+ \|\widehat{\widehat{f}}(t)\|^2 +\int_{0}^{t}(\|\widehat{f}(s)\|^2+{\|\widehat{\widehat{f}}(s)\|^2) }\,ds  \Big).
\end{align}
}
\noindent\textit{Step 2 (Estimate for the term
$\int_{0}^{t}(\mathcal{I}^{1-\alpha}\widehat{\rho},\widehat{\rho})\,ds$ on the right-hand side of \eqref{eqn:bound}).}
An application of \eqref{bhritzprjn_ns_smth} results in
\begin{align*}
\norm[0]{\widehat{\rho}(s)} & \le \int_{0}^{s}\|\rho(\tau)\|\,d\tau\\
& \lesssim
\begin{cases}
\displaystyle 
 h^{2\gamma_0} \norm[0]{u_0}\int_{0}^{s}\tau^{-\alpha}\,d\tau\lesssim h^{2\gamma_0}s^{1-\alpha}\norm[0]{u_0}
\text{ for } f=0 \text{ and } u_0\in L^2(\Omega),\\
h^{2\gamma_0}s \: \Lambda_0(\epsilon,s)
\text{ for } f \neq 0 \text{ and } u_0\in D(A).
\end{cases}
\end{align*}
This bound, the definition of $\mathcal{I}^{1-\alpha}$, and  $\tau\le s$ in the computation of the integral in the second step below lead to 
\begin{align*}
\norm[0]{\mathcal{I}^{1-\alpha}\widehat{\rho}(s)}\lesssim \int_{0}^{s}(s-\tau)^{-\alpha}\norm[0]{\widehat{\rho}(\tau)}\,d\tau\lesssim
\begin{cases}
 h^{2\gamma_0}s^{2-2\alpha}\norm[0]{u_0} \text{ for } f=0 \text{ and } u_0\in L^2(\Omega),\\
h^{2\gamma_0}s^{2-\alpha}\Lambda_0(\epsilon,s)
\text{ for } f \neq 0 \text{ and } u_0\in D(A).
\end{cases}
\end{align*}
The H\"older inequality and a combination of the last two displayed bounds show
\begin{align*}
\int_{0}^{t}(\mathcal{I}^{1-\alpha}\widehat{\rho},\widehat{\rho})\,ds \le
\int_{0}^{t}\norm[0]{\mathcal{I}^{1-\alpha}\widehat{\rho}}\norm[0]{\widehat{\rho}}\,ds\lesssim 
\begin{cases}
h^{4\gamma_0}t^{4-3\alpha}\norm[0]{u_0}^2 \text{ for } f=0 \text{ and } u_0\in L^2(\Omega),\\
h^{4\gamma_0}t^{4-\alpha}\Lambda_0(\epsilon,t)^2
 \text{ for } f \neq 0 \text{ and } u_0\in D(A).
 \end{cases}
\end{align*}
\noindent\textit{Step 3 (Estimate for the term
$ (\|\widehat{\widehat{\varphi}}(t)\|^2 +\int_{0}^{t}(\|\widehat{\varphi}(s)\|^2+ \|\widehat{\widehat{\varphi}}(s)\|^2  ) \,ds )$
 on the right-hand side of \eqref{eqn:bound}).}  The approach for the estimates for the nonsmooth  and smooth initial data differs here as direct bounds for the required estimates offer challenges due to the singularity factor $t^{-1}$ (see \eqref{bhhomopfn_func_ns} with $i=1,p=0$)  when $u_0 \in L^2(\Omega)$. However, this issue is resolved below with the help of (\ref{bhsemigroup}).

\noindent{\it Nonsmooth initial data ($f=0$ and $u_0 \in L^2(\Omega)$)}.
Recall  $\varphi=\mathcal{I}^{1-\alpha}u'$ and apply  (\ref{bhsemigroup}) to observe
\begin{align*}
\begin{aligned}
\widehat{\varphi}&=\mathcal{I}(\mathcal{I}^{1-\alpha}u'(t))=\mathcal{I}^{1-\alpha}\mathcal{I}(u'(t))=\mathcal{I}^{1-\alpha}(u(t)-u_0), \; \;
\widehat{\widehat{\varphi}}(t)=\mathcal{I}^{2-\alpha}(u(t)-u_0).
\end{aligned}
\end{align*}
The definition of ${\cal I}^{2-\alpha}$ (cf. \eqref{rlfracintegral}), $(t-s)\le t$, and  (\ref{bhhomopfn_func_ns}) with $i=0, p=0$ lead to 
\begin{align*}
\|\widehat{\widehat{\varphi}}(t)\|^2 = \|\int_{0}^{t} \frac{(t-s)^{1-\alpha}}{\Gamma(2-\alpha)}(u(s)-u_0)\,ds\|^2 \lesssim \Big( t^{1-\alpha}\int_{0}^{t}(\|u(s)\|+\|u_0\|)\,ds \Big)^2\lesssim t^{4-2\alpha}\norm[0]{u_0}^2.  
\end{align*}
This yields
\begin{equation}\label{eqn:trick}
\displaystyle \int_{0}^{t}\|\widehat{\widehat{\varphi}}(s)\|^2\,ds \lesssim \int_{0}^{t} s^{4-2 \alpha} \norm[0]{u_0}^2 \,ds  =
\frac{t^{5-2 \alpha}}{5-2 \alpha}  \norm[0]{u_0}^2  \lesssim 
t^{4-2\alpha}\norm[0]{u_0}^2
\end{equation}
with $t\le T$ (used once) in the last step.
The definition of ${\cal I}^{1-\alpha}$ (cf. \eqref{rlfracintegral}) and  (\ref{bhhomopfn_func_ns}) with $i=0, p=0$ establish
$$\displaystyle \|\widehat{\varphi}(s)\|^2 =\|\int_{0}^{s} \frac{(s-\tau)^{-\alpha}}{\Gamma(1-\alpha)}(u(\tau)-u_0)\,d\tau\|^2\lesssim \Big(\int_{0}^{s}(s-\tau)^{-\alpha}(\|u(\tau)\|+\|u_0\|)\,d\tau\Big)^2
\lesssim s^{2-2\alpha}\norm[0]{u_0}^2,$$
and this shows
$ \displaystyle \int_{0}^{t} \|\widehat{\varphi}(s)\|^2\,ds\lesssim t^{3-2\alpha}\norm[0]{u_0}^2$. 
Altogether, we obtain
\begin{align*}
 \displaystyle   \|\widehat{\widehat{\varphi}}(t)\|^2 + \int_{0}^{t}(\|\widehat{\varphi}(s)\|^2+ \|\widehat{\widehat{\varphi}}(s)\|^2  ) \,ds  \lesssim t^{3-2\alpha}(t+1) \norm[0]{u_0}^2.
\end{align*}
\noindent{\it Smooth initial data ($f \neq 0$ and $u_0 \in D(A)$)}.
Using $\widehat{\widehat{\varphi}}(t)={\cal I}^{3-\alpha}u'(t)$, $(t-s)\le t$, and (\ref{bhhomopfn_dtve_s}) with $p=0$ and  (\ref{bhnonhomo_dtve}) with $q=\epsilon$, we have
\begin{align*}
\|\widehat{\widehat{\varphi}}(t)\|^2&\lesssim \Big(\int_{0}^{t}(t-s)^{2-\alpha}\norm[0]{u'(s)}\,ds\Big)^2
\lesssim 
\Big( t^{2-\alpha} \int_{0}^{t}\big(s^{\alpha-1}\|u_0\|_{D(A)}+ s^{-1+\frac{\alpha\epsilon}{2}}\|f(0)\|_{D(A^{\epsilon/2})} \nonumber 
\\
& \quad + \epsilon^{-1}s^{\frac{\alpha\epsilon}{2}}\|f'\|_{L^{\infty}(0,T;D(A^{\epsilon/2}))}\big)\,ds\Big)^2
\\
&\lesssim \Big( t^2\norm[0]{u_0}_{D(A)}+t^{2-\alpha(1-\epsilon/2)}\norm[0]{f(0)}_{D(A^{\epsilon/2})}+\epsilon^{-1}t^{3-\alpha(1-\epsilon/2)}\norm[0]{f'}_{L^{\infty}(0,T;D(A^{\epsilon/2}))}  \Big)^2. 
\end{align*}
An integration from $0$ to $t$, \eqref{algebraic}, elementary algebraic manipulations, and $t\le T$ (used once) as in \eqref{eqn:trick}, leads to a similar bound for  $\displaystyle \int_{0}^{t}\|\widehat{\widehat{\varphi}}(s)\|^2\,ds$. Analogous arguments with elementary manipulations lead to
\begin{align*}
\int_{0}^{t}\|\widehat{\varphi}(s)\|^2\,ds
&\lesssim \Big( t^\frac{3}{2}\norm[0]{u_0}_{D(A)}+t^{\frac{3}{2}-\alpha(1-\frac{\epsilon}{2})}\norm[0]{f(0)}_{D(A^\frac{\epsilon}{2})}+\epsilon^{-1}t^{\frac{5}{2}-\alpha(1-\frac{\epsilon}{2})}\norm[0]{f'}_{L^{\infty}(0,T;D(A^\frac{\epsilon}{2}))}  \Big)^2.
\end{align*}
A combination of the last two displayed inequalities establishes the required estimate in this step.

\noindent\textit{Step 4 (Estimate for the term
$\big( \|\widehat{\widehat{f}}(t)\|^2 +\int_{0}^{t}(\|\widehat{f}(s)\|^2+ \|\widehat{\widehat{f}}(s)\|^2 )\,ds  \big)$
 on the right-hand side of \eqref{eqn:bound}).}
For {\it nonsmooth}  initial data, since $f$ is chosen as zero, the term is zero. For {\it smooth} initial data, the definitions of $\widehat{f}$ and $\widehat{\widehat{f}}$ show 
\begin{align*}
  \|\widehat{\widehat{f}}(t)\|^2 +\int_{0}^{t}(\|\widehat{f}(s)\|^2 + \|\widehat{\widehat{f}}(s)\|^2 )\,ds  \lesssim \Big( t^{3/2}\|f\|_{L^2(0,t;L^2(\Omega))}+ t\|f\|_{L^2(0,t;L^2(\Omega))}\Big)^2. 
\end{align*}
A combination of  \textit{Steps 2-4} in \eqref{eqn:bound} and algebraic manipulations conclude the proof.
\end{proof}
\noindent The notations $
\upsilon_1(t)=t \upsilon(t)$ for $\upsilon=\theta, \rho,\varphi,f$ and $\displaystyle \widehat{\upsilon}(t)=\int_{0}^{t} \upsilon(s)\,ds$ for $\upsilon=\theta,\theta_1,\rho,\varphi_1, f_1$ are used in the next lemma.
\begin{lemma}[Estimate for $\int_{0}^{t}(\mathcal{I}^{1-\alpha}\theta_1,\theta_1)\,ds$]\label{bhlemma2}
For $\theta$ that satisfies (\ref{bherreqn}) and $t\in(0,T]$, the bounds stated below hold. 
\begin{align*}
\int_{0}^{t}(\mathcal{I}^{1-\alpha}\theta_1,\theta_1)\,ds+\|\widehat{\theta_1}(t)\|_h^{2}\lesssim   t^{4-\alpha}
\Big(h^{2\gamma_0}t^{-\alpha} + h^2t^{-(\alpha+1)/2}\Big)^2 \norm[0]{u_0}^2, 
\end{align*}
if $u_0\in L^2(\Omega)$ and $f=0$, and 
\begin{align*}
\int_{0}^{t}(\mathcal{I}^{1-\alpha}\theta_1,\theta_1)\,ds+\|\widehat{\theta_1}(t)\|_h^{2}\lesssim  t^{4-\alpha}\Big(h^{2\gamma_0}\Lambda_0(\epsilon,t) +h^2t^{\alpha/2-2}{\Lambda_2(\epsilon,t) }\Big)^2,  
\end{align*}
for all $ \epsilon \in (0,1)$, if $u_0\in D(A)$ and $f$ is such that $\Lambda_0(\epsilon,t)$ and $\Lambda_2(\epsilon,t)$,  defined in \eqref{lambda0} and \eqref{bh_thetaconstnts_smth}, are finite.
\end{lemma}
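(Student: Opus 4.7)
The plan is to mirror the strategy used in Lemma \ref{bhlemma1}, but with the weight $t$ (yielding $\theta_1$) instead of a double time-integration. First, I would multiply the error equation (\ref{bherreqn}) by $t$ and apply the fractional identity (\ref{bhminiest1}) to rewrite both $t\mathcal{I}^{1-\alpha}\theta'(t)$ and $t\mathcal{I}^{1-\alpha}\rho'(t)$. This produces $\mathcal{I}^{1-\alpha}\theta_1'$ on the left and $\mathcal{I}^{1-\alpha}\rho_1'$ on the right, the lower-order contributions $-\alpha\mathcal{I}^{1-\alpha}\theta(t)$ and $\alpha\mathcal{I}^{1-\alpha}\rho(t)$, and singular boundary pieces $t\kappa_{1-\alpha}(t)\theta(0)$ and $t\kappa_{1-\alpha}(t)\rho(0)$ which cancel when tested against $\chi_h\in V_h$ since $(\theta(0)+\rho(0),\chi_h)=(u_0-P_hu_0,\chi_h)=0$ by the definition of $P_h$. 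The resulting identity has the form $(\mathcal{I}^{1-\alpha}\theta_1',\chi_h)+a_h(\theta_1,\chi_h)=\alpha(\mathcal{I}^{1-\alpha}(\theta+\rho),\chi_h)-(\mathcal{I}^{1-\alpha}\rho_1',\chi_h)-(\varphi_1,(Q-I)\chi_h)+(f_1,(Q-I)\chi_h)$.

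Next, I would integrate this identity in time from $0$ to $t$; using the semigroup property (\ref{bhsemigroup}) together with the vanishing initial values $\theta_1(0)=\rho_1(0)=0$, this converts $\mathcal{I}^{1-\alpha}\theta_1'$ to $\mathcal{I}^{1-\alpha}\theta_1$, $\mathcal{I}^{1-\alpha}\theta$ to $\mathcal{I}^{1-\alpha}\widehat{\theta}$, and analogously for $\rho$, $\varphi_1$, $f_1$. I would then test with $\chi_h=\theta_1(t)=\widehat{\theta_1}'(t)$ and integrate once more in time over $(0,t)$. The symmetry of $a_h$ from \textbf{(H)} converts $a_h(\widehat{\theta_1},\widehat{\theta_1}')$ into $\tfrac{1}{2}\frac{d}{dt}a_h(\widehat{\theta_1},\widehat{\theta_1})$, producing the desired $\frac{\beta_1}{2}\|\widehat{\theta_1}(t)\|_h^2$ via \eqref{bhbilinearcoer_cts} and $\widehat{\theta_1}(0)=0$. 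The $\mathcal{I}^{1-\alpha}$-valued terms on the right (involving $\widehat{\theta}$, $\widehat{\rho}$, $\rho_1$) are absorbed by the continuity property (\ref{bhcontinuity}) with a sufficiently small $\vartheta$, while the $(\widehat{\varphi_1},(Q-I)\theta_1)$ and $(\widehat{f_1},(Q-I)\theta_1)$ contributions are integrated by parts in time (transferring $\theta_1=\widehat{\theta_1}'$ onto the data using $\widehat{\theta_1}(0)=0$) and bounded via Lemma \ref{bhcompmorlyestmte} together with Young's inequality, exactly as in the proof of Lemma \ref{keylem_bhthetafinlthm}; this produces a $\int_0^t\|\widehat{\theta_1}(s)\|_h^2 ds$ term which Gronwall's lemma (Lemma \ref{gronwalllem}) handles.

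Finally, I would bound each residual on the right-hand side. The term $\int_0^t(\mathcal{I}^{1-\alpha}\widehat{\theta},\widehat{\theta})ds$ comes for free from Lemma \ref{bhlemma1}; the $\int_0^t(\mathcal{I}^{1-\alpha}\widehat{\rho},\widehat{\rho})ds$ and $\int_0^t(\mathcal{I}^{1-\alpha}\rho_1,\rho_1)ds$ pieces are estimated as in Step 2 of Lemma \ref{bhlemma1} using the projection error \eqref{bhritzprjn_ns_smth} paired with the $u$-regularity of Theorems \ref{bhhomop_reg} and \ref{bhnonhomop_reg}; the $h^4\|\widehat{\varphi_1}(t)\|^2+h^4\int_0^t\|\varphi_1(s)\|^2\,ds$ contributions are handled as in Step 3 of Lemma \ref{bhlemma1} via the identity $\widehat{\varphi}=\mathcal{I}^{1-\alpha}(u-u_0)$ and the $u'$-regularity bounds; and the $f_1$ data terms are routine. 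The only new bookkeeping item is a beta-function factor $B(\alpha\epsilon/2,1-\alpha)$ arising from the convolution $\int_0^s(s-\tau)^{-\alpha}\tau^{-1+\alpha\epsilon/2}\,d\tau=B(\alpha\epsilon/2,1-\alpha)\,\Gamma(1-\alpha)\,s^{-\alpha+\alpha\epsilon/2}$ when estimating $\|\varphi_1(s)\|$ against the $\|f(0)\|_{D(A^{\epsilon/2})}$ component of $\|u'\|$; this is precisely the factor that distinguishes $\Lambda_2(\epsilon,t)$ from $\Lambda_1(\epsilon,t)$. The main obstacle is the coupling to Lemma \ref{bhlemma1} through the new term $\alpha(\mathcal{I}^{1-\alpha}(\widehat{\theta}+\widehat{\rho}),\chi_h)$ generated by (\ref{bhminiest1}); however, since Lemma \ref{bhlemma1} already delivers a bound of exactly the required order, this coupling is benign and the lemma follows.
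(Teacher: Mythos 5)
Your proposal matches the paper's proof essentially step for step: multiply \eqref{bherreqn} by $t$, apply \eqref{bhminiest1} to both $t\mathcal{I}^{1-\alpha}\theta'$ and $t\mathcal{I}^{1-\alpha}\rho'$ with the boundary terms cancelling via $(\rho(0)+\theta(0),\chi_h)=0$, integrate twice, test with $\theta_1$, invoke \eqref{bhcontinuity}, coercivity, the $T_1$/$T_2$-type integration-by-parts bounds with Lemma \ref{bhcompmorlyestmte}, and Gronwall, then estimate the residuals exactly as in Lemma \ref{bhlemma1} (reusing its bound for $\int_0^t(\mathcal{I}^{1-\alpha}\widehat{\theta},\widehat{\theta})\,ds$), with the beta-function factor $B(\alpha\epsilon/2,1-\alpha)$ correctly identified as what turns $\Lambda_1$ into $\Lambda_2$. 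The only cosmetic imprecision is citing $\widehat{\varphi}=\mathcal{I}^{1-\alpha}(u-u_0)$ for the $\varphi_1$ terms in the nonsmooth case, where the paper instead applies \eqref{bhminiest1} to $s\mathcal{I}^{1-\alpha}u'(s)$; the underlying idea of shifting the derivative off $u$ to avoid the $\tau^{-1}$ singularity is the same.
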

\begin{proof}
 The structure of the proof is similar to that of Lemma \ref{bhlemma1}. However, the majorizations are different and for the readability of the paper, we give the necessary details. The proof is presented in four steps. To prove the desired bounds, we proceed following Lemma \ref{keylem_bhthetafinlthm} as follows. \\
\noindent{\textit{Step 1 (An intermediate bound).}}   Multiply  (\ref{bherreqn}) by $t$, use (\ref{bhminiest1}), and $(\rho(0)+\theta(0),\chi_h)=0$ for all $\chi_h\in V_h$ to arrive at
\begin{align*}
\begin{aligned}
(\mathcal{I}^{1-\alpha}\theta_1',\chi_h)+a_h(\theta_1,\chi_h)&= \alpha(\mathcal{I}^{1-\alpha}\theta,\chi_h)-(\mathcal{I}^{1-\alpha}\rho_1',\chi_h) +\alpha(\mathcal{I}^{1-\alpha}\rho,\chi_h)\\
&\quad -(\varphi_1(t),(Q-I)\chi_h)+(f_1(t),(Q-I)\chi_h) \text{ for all } \chi_h\in V_h.
\end{aligned}
\end{align*}
Integrate the above equality over $(0,t)$, and next choose $\chi_h=\theta_1(t)$ to obtain
\begin{align*}
\begin{aligned}
(\mathcal{I}^{1-\alpha}\theta_1,\theta_1)+a_h(\widehat{\theta_1},\theta_1)&= \alpha(\mathcal{I}^{1-\alpha}\widehat{\theta},\theta_1)-(\mathcal{I}^{1-\alpha}\rho_1,\theta_1) +\alpha(\mathcal{I}^{1-\alpha}\widehat{\rho},\theta_1)\\
&-(\widehat{\varphi_1}(t),(Q-I)\theta_1)+(\widehat{f_1}(t),(Q-I)\theta_1).
\end{aligned}
\end{align*}
An integration over $(0,t)$ once again,  an application of \eqref{bhcontinuity} to the first three terms on the right-hand side, and the choice $\vartheta=\frac{1}{2(2\alpha+1)}$ reveal 
\begin{align*}
\begin{aligned}
&\int_{0}^{t}(\mathcal{I}^{1-\alpha}\theta_1,\theta_1)\,ds+\int_{0}^{t}a_h(\widehat{\theta_1},\theta_1)\,ds\le \frac{1}{2}\int_{0}^{t}(\mathcal{I}^{1-\alpha}\theta_1,\theta_1)\,ds+ C\int_{0}^{t}\Big[(\mathcal{I}^{1-\alpha}\widehat{\theta},\widehat{\theta})+(\mathcal{I}^{1-\alpha}\rho_1,\rho_1)\\
&+(\mathcal{I}^{1-\alpha}\widehat{\rho},\widehat{\rho})\Big]\,ds
+\Big|\int_{0}^{t}(\widehat{\varphi_1}(s),(Q-I)\theta_1)\,ds\Big|+\Big|\int_{0}^{t}(\widehat{f_1}(s),(Q-I)\theta_1)\,ds\Big|.
\end{aligned}
\end{align*}
Apply $2a_h(\widehat{\theta_1}(t),\theta_1(t))=\frac{d}{dt}a_h(\widehat{\theta_1}(t),\widehat{\theta_1}(t))$, \eqref{bhbilinearcoer_cts}, approach of \textit{Step 1} of Lemma \ref{bhlemma1} to bound the last two terms of the above displayed estimate, and Gronwall's lemma to obtain
\begin{align}\label{bhbnd_3}
&\int_{0}^{t}(\mathcal{I}^{1-\alpha}\theta_1,\theta_1)\,ds+\|\widehat{\theta_1}(t)\|_h^{2}\lesssim \int_{0}^{t}(\mathcal{I}^{1-\alpha}\widehat{\theta},\widehat{\theta})\,ds+\int_{0}^{t}(\mathcal{I}^{1-\alpha}\widehat{\rho},\widehat{\rho})\,ds + \int_{0}^{t}(\mathcal{I}^{1-\alpha}\rho_1,\rho_1)\,ds \nonumber \\
& \quad + h^4\Big( \|\widehat{\varphi_1}(t)\|^2 +\int_{0}^{t} (\|\varphi_1(s)\|^2+ \|\widehat{\varphi_1}(s)\|^2) \,ds + \|\widehat{f_1}(t)\|^2 +\int_{0}^{t} (\|f_1(s)\|^2+ \|\widehat{f_1}(s)\|^2)\,ds  \Big).
\end{align}
Observe that the bounds of the first two terms on the right-hand side of \eqref{bhbnd_3} are available from the statement and {\it Step 2} of Lemma \ref{bhlemma1}. Hence in the steps below, we  bound the remaining terms.

\noindent{\textit{Step 2 (Estimate for the term
$\int_{0}^{t}(\mathcal{I}^{1-\alpha}\rho_1,\rho_1)\,ds$ on the right-hand side of \eqref{bhbnd_3}).}}  In view of \eqref{bhritzprjn_ns_smth}, we obtain 
\begin{align*}
\norm[0]{\rho_1(s)}=s\norm[0]{\rho(s)} \lesssim  
\begin{cases}
\displaystyle 
h^{2\gamma_0}s^{1-\alpha}\norm[0]{u_0}
\text{ for } f=0 \text{ and } u_0\in L^2(\Omega),\\
h^{2\gamma_0}s \: \Lambda_0(\epsilon,s)
\text{ for } f \neq 0 \text{ and } u_0\in D(A).
\end{cases}  
\end{align*}
Employ this to establish
\begin{align*}
\norm[0]{\mathcal{I}^{1-\alpha}\rho_1(s)}\lesssim \int_{0}^{s}(s-\tau)^{-\alpha}\norm[0]{\rho_1(\tau)}\,d\tau\lesssim  
\begin{cases}
 h^{2\gamma_0}s^{2-2\alpha}\norm[0]{u_0} \text{ for } f=0 \text{ and } u_0\in L^2(\Omega),\\
h^{2\gamma_0}s^{2-\alpha}\Lambda_0(\epsilon,s)
\text{ for } f \neq 0 \text{ and } u_0\in D(A).
\end{cases} 
\end{align*}
The H\"older inequality plus the last two displayed bounds reveal 
\begin{align*}
\int_{0}^{t}(\mathcal{I}^{1-\alpha}\rho_1,\rho_1)\,ds \le
\int_{0}^{t}\norm[0]{\mathcal{I}^{1-\alpha}\rho_1}\norm[0]{\rho_1}\,ds\lesssim   
\begin{cases}
h^{4\gamma_0}t^{4-3\alpha}\norm[0]{u_0}^2 \text{ for } f=0 \text{ and } u_0\in L^2(\Omega),\\
h^{4\gamma_0}t^{4-\alpha}\Lambda_0(\epsilon,t)^2
 \text{ for } f \neq 0 \text{ and } u_0\in D(A).
 \end{cases}   
\end{align*}
\noindent\textit{Step 3 (Estimate for the term
$ \big( \|\widehat{\varphi_1}(t)\|^2 +\int_{0}^{t}(\|\varphi_1(s)\|^2+\|\widehat{\varphi_1}(s)\|^2) \,ds \big)$
on the right-hand side of \eqref{bhbnd_3}).}  Since the nonsmooth data stability estimate reflects a singularity factor $t^{-1}$ (cf. \eqref{bhhomopfn_func_ns} with $i=1, p=0$), we proceed as follows for this case.

\noindent{\it Nonsmooth initial data ($f=0$ and $u_0 \in L^2(\Omega)$)}. Recall that $\varphi=\mathcal{I}^{1-\alpha}u'$ and apply \eqref{bhminiest1} to observe
\begin{align*}
\varphi_1(s)=s\varphi(s)=s\mathcal{I}^{1-\alpha}u'(s)&= \mathcal{I}^{1-\alpha}(su(s))'-\alpha\mathcal{I}^{1-\alpha}u(s)-s\kappa_{1-\alpha}(s)u_0  \\
&=\mathcal{I}^{1-\alpha}(u(s)+su'(s))-\alpha\mathcal{I}^{1-\alpha}u(s)-s\kappa_{1-\alpha}(s)u_0.
\end{align*}
The definition of $\mathcal{I}^{1-\alpha}$ in \eqref{rlfracintegral} and repeated applications of \eqref{bhhomopfn_func_ns} with $i=0,p=0$ and $i=1,p=0$ yield $\|\varphi_1(s)\|\lesssim s^{1-\alpha}\|u_0\|$  
and hence
$ \displaystyle \|\widehat{\varphi_1}(t)\|^2 \lesssim t^{4-2\alpha}\norm[0]{u_0}^2.  $
Utilize this bound  to arrive at 
\begin{equation*}
\displaystyle \int_{0}^{t}\|\widehat{\varphi_1}(s)\|^2\,ds \lesssim \int_{0}^{t} s^{4-2 \alpha} \norm[0]{u_0}^2 \,ds  \lesssim 
t^{4-2\alpha}\norm[0]{u_0}^2,
\end{equation*}
with $t\le T$ (used once) in the last step.
Analogous arguments show  $ \displaystyle \int_{0}^{t}\|\varphi_1(s)\|^2\,ds \lesssim t^{3-2\alpha}\norm[0]{u_0}^2$. A combination of  these bounds shows
\begin{align*}
\displaystyle   \|\widehat{\varphi_1}(t)\|^2 + \int_{0}^{t} (\|\varphi_1(s)\|^2+\|\widehat{\varphi_1}(s)\|^2) \,ds  \lesssim t^{3-2\alpha}(t+1) \norm[0]{u_0}^2.
\end{align*}

\noindent{\it Smooth initial data ($f \neq 0$ and $u_0 \in D(A)$)}. Apply the definition of $\mathcal{I}^{1-\alpha}$ in \eqref{rlfracintegral}, $s\le t$, and \eqref{bhhomopfn_dtve_s} with $p=0$ and \eqref{bhnonhomo_dtve} with $q=\epsilon$ to obtain
\begin{align} \label{eqn:temp1}
\|\widehat{\varphi_1}(t)\|^2&  =\|\int_{0}^{t}s\mathcal{I}^{1-\alpha}u'(s)\,ds\|^2\lesssim \Big(t\int_{0}^{t}\int_{0}^{s}(s-\tau)^{-\alpha}\|u'(\tau)\|\,d\tau\,ds\Big)^2 \nonumber\\
&\lesssim \Big(t\int_{0}^{t}\int_{0}^{s}(s-\tau)^{-\alpha}\big(\tau^{\alpha-1}\|u_0\|_{D(A)}+ \tau^{-1+\frac{\alpha\epsilon}{2}}\|f(0)\|_{D(A^{\epsilon/2})} \nonumber \\
& \quad + \epsilon^{-1}\tau^{\frac{\alpha\epsilon}{2}}\|f'\|_{L^{\infty}(0,T;D(A^{\epsilon/2}))}\big)\,d\tau\,ds\Big)^2.
\end{align}
For $\alpha\in (0,1)$, recall the following identities involving the beta function $B(\cdot,\cdot)$:
\begin{align} \label{beta}
\begin{aligned}
&\int_{0}^{t}(t-s)^{-\alpha}s^{\alpha-1}\,ds = \int_{0}^{1}(1-s)^{\alpha-1}s^{-\alpha}\,ds=B(\alpha,1-\alpha), \\  & \int_{0}^{t}(t-s)^{-\alpha}s^{\frac{\alpha\epsilon}{2}-1}\,ds= t^{-\alpha(1-\epsilon/2)}\int_{0}^{1}(1-s)^{\frac{\alpha\epsilon}{2}-1}s^{-\alpha}\,ds=B(\alpha\epsilon/2,1-\alpha)t^{-\alpha(1-\epsilon/2)} .
\end{aligned}
\end{align}
Substitute \eqref{beta} in \eqref{eqn:temp1} to obtain
\begin{align*} 
\|\widehat{\varphi_1}(t)\|^2 & \lesssim \Big( t^2\norm[0]{u_0}_{D(A)}+B(\alpha\epsilon/2,1-\alpha)t^{2-\alpha(1-\epsilon/2)}\norm[0]{f(0)}_{D(A^{\epsilon/2})} \nonumber \\
& \quad +\epsilon^{-1}t^{3-\alpha(1-\epsilon/2)}\norm[0]{f'}_{L^{\infty}(0,T;D(A^{\epsilon/2}))}\Big)^2.
\end{align*}
Utilize $t\le T$  to conclude that an analogous bound holds for the term $\int_{0}^{t}\|\widehat{\varphi_1}(s)\|^2\,ds$.
Further, similar calculations as for $\|\widehat{\varphi_1}(t)\|^2$ above and algebraic manipulations with  \eqref{algebraic} reveal
\begin{align*}
&\int_{0}^{t}\|\varphi_1(s)\|^2\,ds= \int_{0}^{t}\|s\mathcal{I}^{1-\alpha}u'(s)\|^2\,ds\lesssim t^2\int_{0}^{t}\Big(\int_{0}^{s}(s-\tau)^{-\alpha}\|u'(\tau)\|\,d\tau\Big)^2\,ds \lesssim \Big( t^{3/2}\norm[0]{u_0}_{D(A)} \\
& + B(\alpha\epsilon/2,1-\alpha)t^{3/2-\alpha(1-\epsilon/2)}\norm[0]{f(0)}_{D(A^{\epsilon/2})}+\epsilon^{-1}t^{5/2-\alpha(1-\epsilon/2)}\norm[0]{f'}_{L^{\infty}(0,T;D(A^{\epsilon/2}))}  \Big)^2.
\end{align*}

\noindent\textit{Step 4 (Estimate for the term
$\big(\|\widehat{f_1}(t)\|^2 +\int_{0}^{t}(\|f_1(s)\|^2+ \|\widehat{f_1}(s)\|^2)\,ds  \big)$ on the right-hand side of \eqref{bhbnd_3}).} Since $f=0$ for {\it nonsmooth}  initial data,  the estimate is trivial. For the case of {\it smooth} initial data, the definitions of $f_1, \widehat{f_1}$ lead to
\begin{align*}
\|\widehat{f_1}(t)\|^2 +\int_{0}^{t}(\|f_1(s)\|^2+ \|\widehat{f_1}(s)\|^2)\,ds  \lesssim \Big( t^{3/2}\|f\|_{L^2(0,t;L^2(\Omega))}+ t\|f\|_{L^2(0,t;L^2(\Omega))} \Big)^2.    
\end{align*}
A combination of the estimates from \textit{Steps 2-4} with \eqref{bhbnd_3} concludes the proof.
\end{proof}
\begin{lemma}[Estimate for $\int_{0}^{t}(\mathcal{I}^{1-\alpha}\lambda,\lambda)\,ds$]\label{bhlemma3}
For $\lambda=-\rho_2'+2\alpha\rho_1+\alpha(1-\alpha)\widehat{\rho}$ with $\rho_2'(t)=(t^2\rho(t))'$, $\rho_1(t)=t\rho(t)$, $\displaystyle \widehat{\rho}(t)=\int_{0}^{t}\rho(s)\,ds$, and $t\in(0,T]$, the following bounds hold. 
\begin{align*}
\int_{0}^{t}(\mathcal{I}^{1-\alpha}\lambda,\lambda)\,ds\lesssim    t^{4-\alpha}
\Big(h^{2\gamma_0}t^{-\alpha}\norm[0]{u_0}\Big)^2, 
\end{align*}
for $u_0\in L^2(\Omega)$ and $f=0$, and 
\begin{align*}
\int_{0}^{t}(\mathcal{I}^{1-\alpha}\lambda,\lambda)\,ds\lesssim    t^{4-\alpha}\Big(h^{2\gamma_0}\mathcal{B}_0(\epsilon,t)
\Big)^2,  
\end{align*}
for all $ \epsilon \in (0,1)$, if $u_0\in D(A)$ and $f$ such that
$\mathcal{B}_0(\epsilon,t)$  defined in  \eqref{bh_thetaconstnts_smth} is finite.
\end{lemma}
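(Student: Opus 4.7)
\noindent\textbf{Proof plan for Lemma \ref{bhlemma3}.}
Since the Ritz projection $\mathcal{R}_h$ defined in \eqref{bhritzprjn} is linear in $v\in V$ and time-independent, it commutes with $\partial_t$, so $\rho'(t)=u'(t)-\mathcal{R}_hu'(t)$. Expanding $\rho_2'(t)=(t^2\rho(t))'=2\rho_1(t)+t^2\rho'(t)$, the definition of $\lambda$ yields
\begin{equation*}
\lambda(t)=-2(1-\alpha)\rho_1(t)-t^2\rho'(t)+\alpha(1-\alpha)\widehat{\rho}(t),
\end{equation*}
so $\|\lambda(t)\|\le 2\|\rho_1(t)\|+\|t^2\rho'(t)\|+\|\widehat{\rho}(t)\|$. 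The first plan is to control each term in the $L^2(\Omega)$ norm as a function of $t$, then bound $(\mathcal{I}^{1-\alpha}\lambda,\lambda)$ by $\|\mathcal{I}^{1-\alpha}\lambda\|\,\|\lambda\|$ and insert standard beta-function estimates for convolutions against $s^{\alpha-1}$.

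For $\|\rho_1(t)\|$ and $\|\widehat{\rho}(t)\|$, I would apply \eqref{bhritzprjn_ns_smth} directly (and integrate in time for $\widehat\rho$). For $\|t^2\rho'(t)\|$, since $\mathcal{R}_h$ commutes with $\partial_t$, Lemma \ref{bhritzprjn_approx} applied to $u'(t)$ gives $\|\rho'(t)\|\lesssim h^{2\gamma_0}\|u'(t)\|_{H^{2+\gamma_0}(\Omega)}$, which we then reduce to $\|u'(t)\|_{D(A)}$ via the embedding $D(A)\subset H^{2+\gamma_0}(\Omega)$ recalled after \eqref{index}. In the nonsmooth case I would use \eqref{bhhomopfn_func_ns} with $i=1$, $p=1$ to obtain $\|u'(t)\|_{D(A)}\lesssim t^{-(1+\alpha)}\|u_0\|$, hence $\|t^2\rho'(t)\|\lesssim h^{2\gamma_0}t^{1-\alpha}\|u_0\|$; combined with $\|\rho_1(t)\|,\|\widehat{\rho}(t)\|\lesssim h^{2\gamma_0}t^{1-\alpha}\|u_0\|$, we get $\|\lambda(t)\|\lesssim h^{2\gamma_0}t^{1-\alpha}\|u_0\|$. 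In the smooth case, Remark \ref{regularity_rmk} (or equivalently \eqref{bhhomopfn_dtve_s} with $p=1$ combined with \eqref{bhnonhomo_dtve} with $q=\epsilon$) yields $\|u'(t)\|_{D(A)}\lesssim t^{-1}\mathcal{B}_0(\epsilon,t)$ after absorbing the $f(0)$ and $f'$ contributions into $\mathcal{B}_0(\epsilon,t)$ using $t\le T$; together with $\|\rho_1\|,\|\widehat{\rho}\|\lesssim h^{2\gamma_0}t\,\Lambda_0(\epsilon,t)\lesssim h^{2\gamma_0}t\,\mathcal{B}_0(\epsilon,t)$, this gives $\|\lambda(t)\|\lesssim h^{2\gamma_0}t\,\mathcal{B}_0(\epsilon,t)$.

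Next, from the definition of $\mathcal{I}^{1-\alpha}$ in \eqref{rlfracintegral} and the computation $\int_0^s(s-\tau)^{-\alpha}\tau^\mu\,d\tau=B(\mu+1,1-\alpha)s^{\mu+1-\alpha}$, I obtain
\begin{equation*}
\|\mathcal{I}^{1-\alpha}\lambda(s)\|\lesssim
\begin{cases}
h^{2\gamma_0}\|u_0\|\,s^{2-2\alpha}, & f=0,\ u_0\in L^2(\Omega),\\[2pt]
h^{2\gamma_0}\mathcal{B}_0(\epsilon,s)\,s^{2-\alpha}, & f\ne 0,\ u_0\in D(A),
\end{cases}
\end{equation*}
where in the smooth case I use monotonicity in $s$ of $\mathcal{B}_0(\epsilon,\cdot)$. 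Applying the Cauchy--Schwarz inequality $(\mathcal{I}^{1-\alpha}\lambda,\lambda)\le\|\mathcal{I}^{1-\alpha}\lambda\|\,\|\lambda\|$ and integrating from $0$ to $t$ then gives, respectively, $h^{4\gamma_0}\|u_0\|^2\int_0^t s^{3-3\alpha}\,ds\lesssim h^{4\gamma_0}t^{4-3\alpha}\|u_0\|^2$ and $h^{4\gamma_0}\mathcal{B}_0(\epsilon,t)^2\int_0^t s^{3-\alpha}\,ds\lesssim h^{4\gamma_0}t^{4-\alpha}\mathcal{B}_0(\epsilon,t)^2$. Rewriting $t^{4-3\alpha}=t^{4-\alpha}\cdot t^{-2\alpha}$ puts the first bound in the claimed form.

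The only mildly delicate step is the smooth-case bound on $\|u'(t)\|_{D(A)}$: combining the homogeneous part $t^{-1}\|u_0\|_{D(A)}$ from \eqref{bhhomopfn_dtve_s} with the $t^{-1+\alpha\epsilon/2}\|f(0)\|_{D(A^{\epsilon/2})}$ and $\epsilon^{-1}t^{\alpha\epsilon/2}\|f'\|_{L^\infty(0,t;D(A^{\epsilon/2}))}$ contributions from \eqref{bhnonhomo_dtve} with $q=\epsilon$, and writing all of them as $t^{-1}\times(\text{bounded multiple of }\mathcal{B}_0(\epsilon,t))$ using $t\le T$. All other manipulations are straightforward Hölder estimates and beta-function computations, parallel to \emph{Step 2} of Lemma \ref{bhlemma1}.
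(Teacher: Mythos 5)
Your proposal is correct and follows essentially the same route as the paper: a pointwise bound $\|\lambda(s)\|\lesssim h^{2\gamma_0}s^{1-\alpha}\|u_0\|$ (resp.\ $h^{2\gamma_0}s\,\mathcal{B}_0(\epsilon,s)$) obtained from \eqref{bhritzprjn_ns_smth} together with Lemma \ref{bhritzprjn_approx} applied to $u'$ and the stability estimates \eqref{bhhomopfn_func_ns}, \eqref{bhhomopfn_dtve_s}, \eqref{bhnonhomo_dtve}, followed by the convolution estimate for $\mathcal{I}^{1-\alpha}\lambda$ and the Cauchy--Schwarz inequality in time. The only cosmetic difference is that you absorb the $f(0)$ and $f'$ contributions into $\mathcal{B}_0(\epsilon,t)$ at the pointwise stage using $t\le T$, whereas the paper carries them explicitly and collects them at the end; the resulting bounds are identical.
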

\begin{proof}
\noindent{\textit{Step 1 (Nonsmooth data).}}   For $f=0$ and $u_0\in L^2(\Omega)$, Lemma \ref{bhritzprjn_approx} and  (\ref{bhhomopfn_func_ns}) (with $i=0, p=1$ and $i=p=1$) result in
\begin{align*}
\begin{aligned}
\norm[0]{\lambda(s)}&\le \norm[0]{\widehat{\rho}(s)}+3\norm[0]{\rho_1(s)}+s^2\norm[0]{\rho'(s)}\lesssim h^{2\gamma_0}s^{1-\alpha}\|u_0\|+ h^{2\gamma_0}s^2\norm[0]{u'(s)}_{H^{2+\gamma_0}(\Omega)} \\
&\lesssim h^{2\gamma_0}s^{1-\alpha}\|u_0\|.  
\end{aligned}
\end{align*}
This bound leads to 
\begin{align*}
\begin{aligned}
\norm[0]{\mathcal{I}^{1-\alpha}\lambda(s)}
&\lesssim \int_{0}^{s}(s-\tau)^{-\alpha}\|\lambda(\tau)\|\,d\tau\lesssim   h^{2\gamma_0}s^{2-2\alpha}\|u_0\|.  
\end{aligned}
\end{align*}
The H\"older inequality and the two bounds displayed above establish
\begin{align*}
\begin{aligned}
\int_{0}^{t}(\mathcal{I}^{1-\alpha}\lambda,\lambda)\,ds\le  \int_{0}^{t}\norm[0]{\mathcal{I}^{1-\alpha}\lambda}\norm[0]{\lambda}\,ds&\lesssim   h^{4\gamma_0}t^{4-3\alpha}\|u_0\|^2.   
\end{aligned}
\end{align*}

\noindent{\textit{Step 2 (Smooth data).}} For $f\ne 0$ and $u_0\in D(A)$, apply Lemma \ref{bhritzprjn_approx}, \eqref{bhhomopfn_func_s} and (\ref{bhnonhomo_fnest}) with $q=\epsilon$, (\ref{bhhomopfn_dtve_s}) with $p=1$  and (\ref{bhnonhomo_dtve}) with $q=\epsilon$ to obtain 
\begin{align*}
\begin{aligned}
&\norm[0]{\lambda(s)}\le \norm[0]{\widehat{\rho}(s)}+3\norm[0]{\rho_1(s)}+s^2\norm[0]{\rho'(s)}\lesssim h^{2\gamma_0}s\Lambda_0(\epsilon,s) + h^{2\gamma_0}s^2\norm[0]{u'(s)}_{H^{2+\gamma_0}(\Omega)} \\
&\lesssim h^{2\gamma_0}s\Lambda_0(\epsilon,s)+ h^{2\gamma_0}\Big( s\norm[0]{u_0}_{D(A)}+s^{1+\frac{\alpha\epsilon}{2}}\norm[0]{f(0)}_{D(A^{\epsilon/2})}+\epsilon^{-1}s^{2+\frac{\alpha\epsilon}{2}}\norm[0]{f'}_{L^{\infty}(0,T;D(A^{\epsilon/2}))}      \Big).  
\end{aligned}
\end{align*}
An application of this bound in the definition of $\mathcal{I}^{1-\alpha}$ shows
\begin{align*}
\begin{aligned}
&\norm[0]{\mathcal{I}^{1-\alpha}\lambda(s)}
{\lesssim \int_{0}^{s}(s-\tau)^{-\alpha}\|\lambda(\tau)\|\,d\tau }\lesssim    h^{2\gamma_0}s^{2-\alpha}\Lambda_0(\epsilon,s)\\
&\quad + h^{2\gamma_0}\Big( s^{2-\alpha}\norm[0]{u_0}_{D(A)}+s^{2-\alpha(1-\epsilon/2)}\norm[0]{f(0)}_{D(A^{\epsilon/2})}
+\epsilon^{-1}s^{3-\alpha(1-\epsilon/2)}\norm[0]{f'}_{L^{\infty}(0,T;D(A^{\epsilon/2}))}       \Big).  
\end{aligned}
\end{align*}
The H\"older inequality and the two bounds displayed above establish
\begin{align*}
\begin{aligned}
\int_{0}^{t}(\mathcal{I}^{1-\alpha}\lambda,\lambda)\,ds\le \int_{0}^{t}\norm[0]{\mathcal{I}^{1-\alpha}\lambda}\norm[0]{\lambda}\,ds&\lesssim    h^{4\gamma_0}t^{4-\alpha}\Big( \Lambda_0(\epsilon,t)  \\
& +t^{\alpha\epsilon/2}\norm[0]{f(0)}_{D(A^{\epsilon/2})}+\epsilon^{-1}t^{1+\alpha\epsilon/2}\norm[0]{f'}_{L^{\infty}(0,T;D(A^{\epsilon/2}))}  \Big)^2. 
\end{aligned}
\end{align*}
This concludes the proof.
\end{proof}
\begin{lemma}[Estimate for the last term in Lemma \ref{keylem_bhthetafinlthm}]\label{bhlemma4}
For $t\in(0,T]$ and for all $ \epsilon \in (0,1)$, it holds that
\begin{align*}
\begin{aligned}
h^4\Big( \|\varphi_2(t)\|^2 + &\int_{0}^{t}(\|\varphi_2(s)\|^2 +\|\varphi_2'(s)\|^2)\,ds+
 \|f_2(t)\|^2+\int_{0}^{t}(\|f_2(s)\|^2+\|f_2'(s)\|^2 )\,ds  \Big)\\
&\lesssim  t^{4-\alpha}
\begin{cases}
\Big(h^2t^{-(\alpha+1)/2}\norm[0]{u_0}\Big)^2, \;\;   \text{ if }\; u_0\in L^2(\Omega), f=0,\\
\Big(h^2t^{\alpha/2-2}\mathcal{B}_1(\epsilon,t)\Big)^2, \;\;   \text{ if }\; u_0\in D(A), f\ne 0,
\end{cases} 
\end{aligned}
\end{align*}
where $\varphi_2(t)=t^2\varphi(t)$, $\varphi_2'(s)=(s^2\varphi(s))'$, $f_2(t)=t^2f(t)$, $f_2'(s)=(s^2f(s))'$,   and $f$ is such that $\mathcal{B}_1(\epsilon,t)$ defined in \eqref{bh_thetaconstnts_smth} is finite.
\end{lemma}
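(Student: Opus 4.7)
The plan is to exploit the identity $\varphi(t)=\partial_t^\alpha u(t)=f(t)-Au(t)$, which is a direct consequence of \eqref{bhmaineqn}, so that for $t\in(0,T]$,
\begin{align*}
\varphi_2(t)=t^2(f(t)-Au(t)),\qquad \varphi_2'(t)=2t(f(t)-Au(t))+t^2(f'(t)-Au'(t)),
\end{align*}
while $f_2'(t)=2tf(t)+t^2 f'(t)$. This reduces the entire lemma to pointwise bounds for $\|Au(t)\|$ and $\|Au'(t)\|$, which can be read off from the regularity results in Theorems \ref{bhhomop_reg} and \ref{bhnonhomop_reg}, followed by squaring and integrating in time. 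This route is preferable to working directly with the Riemann--Liouville integral $\mathcal{I}^{1-\alpha}u'$ and, in particular, avoids $u''$.

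For the \emph{nonsmooth} case ($f\equiv 0$, $u_0\in L^2(\Omega)$), $\varphi_2=-t^2Au$ and $\varphi_2'=-2tAu-t^2Au'$. Applying \eqref{bhhomopfn_func_ns} with $(i,p)=(0,1)$ and $(i,p)=(1,1)$ yields $\|\varphi_2(t)\|\lesssim t^{2-\alpha}\|u_0\|$ and $\|\varphi_2'(t)\|\lesssim t^{1-\alpha}\|u_0\|$. Squaring and integrating, and noting that all the $f_2$ contributions vanish, gives the left-hand side $\lesssim h^4 t^{3-2\alpha}\|u_0\|^2$; the identity $t^{3-2\alpha}=t^{4-\alpha}\cdot t^{-(1+\alpha)}$ then matches the claimed bound.

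For the \emph{smooth} case, \eqref{bhhomopfn_func_s} combined with \eqref{bhnonhomo_fnest} (with $q=\epsilon$) delivers $\|Au(t)\|\lesssim \Lambda_0(\epsilon,t)$, and \eqref{bhhomopfn_dtve_s} with $p=1$ together with \eqref{bhnonhomo_dtve} (with $q=\epsilon$) delivers
\begin{align*}
\|Au'(t)\|\lesssim t^{-1}\|u_0\|_{D(A)}+\epsilon^{-1}t^{\alpha\epsilon/2}\|f'\|_{L^\infty(0,T;D(A^{\epsilon/2}))}+t^{-1+\alpha\epsilon/2}\|f(0)\|_{D(A^{\epsilon/2})}.
\end{align*}
Inserting these into the expressions for $\varphi_2$ and $\varphi_2'$, squaring, and integrating produces a finite sum of terms of the form $t^{\sigma}(\cdot)$, each with a minimal exponent at least $1$; the $\|f\|_{L^\infty(0,T;D(A^{\epsilon/2}))}$ factor emerging from $\Lambda_0$ can be controlled via $\|f(s)\|_{D(A^{\epsilon/2})}\le \|f(0)\|_{D(A^{\epsilon/2})}+s\|f'\|_{L^\infty(0,T;D(A^{\epsilon/2}))}$. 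For $f_2$, direct computation gives $\|f_2(t)\|^2=t^4\|f(t)\|^2$, $\int_0^t\|f_2(s)\|^2\,ds\lesssim t^4\|f\|_{L^2(0,t;L^2(\Omega))}^2$, and $\int_0^t\|f_2'(s)\|^2\,ds\lesssim t^2\|f\|_{L^2(0,t;L^2(\Omega))}^2+t^4\|f'\|_{L^2(0,t;L^2(\Omega))}^2$. Each resulting summand matches, up to a factor of $T^\alpha$, one of the constituents of $\mathcal{B}_1(\epsilon,t)$ (in particular the $t^{3/2}\|u_0\|_{D(A)}$, $t^{3/2-\alpha(1-\epsilon/2)}\|f(0)\|_{D(A^{\epsilon/2})}$, and $\epsilon^{-1}t^{5/2-\alpha(1-\epsilon/2)}\|f'\|_{L^\infty(0,T;D(A^{\epsilon/2}))}$ pieces of $\Lambda_1$ and $\Lambda_2$).

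The main obstacle is not any single inequality but the careful bookkeeping: one must verify that every exponent $\sigma$ produced above satisfies $\sigma\ge 3/2-\alpha(1-\epsilon/2)$ (so that, after multiplication by $t^{-\alpha/2}$ on the target side, the exponent of $t$ is controlled by $\mathcal{B}_1(\epsilon,t)$ up to a constant depending only on $T$ and $\alpha$), and that the constants $\epsilon^{-1}$ and $B(\alpha\epsilon/2,1-\alpha)$ match those displayed inside $\mathcal{B}_1$. Once this is settled, squaring the pointwise estimates and adding the integrated ones yields $h^4 t^{-\alpha}\mathcal{B}_1(\epsilon,t)^2=t^{4-\alpha}(h^2 t^{\alpha/2-2}\mathcal{B}_1(\epsilon,t))^2$, which is precisely the stated bound. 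The $t^{5/2}\|f''\|_{L^1(0,T;L^2(\Omega))}$ summand appearing in $\mathcal{B}_1$ is not needed through this route; it is included in $\mathcal{B}_1$ only to make the statement of Theorem \ref{bh_thetafnlest} uniform across the different estimates.
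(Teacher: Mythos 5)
Your reduction $\varphi=\partial_t^{\alpha}u=f-Au$ is a genuinely different route from the paper's, which never invokes the PDE at this stage: the paper keeps $\varphi=\mathcal{I}^{1-\alpha}u'$, shifts the weights $t^j$ inside the fractional integral via \eqref{bhminiest1}--\eqref{bhminiest2}, and then bounds $\|u'\|$ and $\|u''\|$ under the kernel $(s-\tau)^{-\alpha}$. For the nonsmooth case your argument is complete and in fact leaner: $\|\varphi_2(t)\|\lesssim t^2\|Au(t)\|\lesssim t^{2-\alpha}\|u_0\|$ and $\|\varphi_2'(t)\|\lesssim t\|Au(t)\|+t^2\|Au'(t)\|\lesssim t^{1-\alpha}\|u_0\|$ by \eqref{bhhomopfn_func_ns} with $(i,p)=(0,1)$ and $(1,1)$, reproducing the paper's bounds while avoiding \eqref{bhhomopfn_ddtve_ns} and the weight identities entirely; your observation that the $t^{5/2}\|f''\|_{L^1}$ and $t^{3/2}(\|f(0)\|+t\|f'(0)\|)$ constituents of $\mathcal{B}_1$ are artifacts of the route through $u''$ is also correct.

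The smooth case, however, has a genuine gap, and it sits exactly where you deferred to ``bookkeeping.'' Your criterion --- that every exponent $\sigma$ exceed $3/2-\alpha(1-\epsilon/2)$ --- is insufficient, because each constituent of $\mathcal{B}_1$ pairs a \emph{specific} norm of the data with a specific power of $t$, so the comparison must be made norm by norm, not against the global minimum. Concretely, bounding $\|\varphi(s)\|\le\|f(s)\|+\|Au(s)\|\lesssim \|f\|_{L^{\infty}}+\Lambda_0(\epsilon,s)$ gives $\|\varphi(s)\|=O(1)$ as $s\to0$, whereas $\varphi(s)=\mathcal{I}^{1-\alpha}u'(s)$ actually vanishes like $s^{1-\alpha(1-\epsilon/2)}$ (up to the $u_0$ part); this is the extra factor the paper's convolution bound retains. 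The loss is fatal for the $\|f'\|_{L^{\infty}(0,T;D(A^{\epsilon/2}))}$ dependence: from $\int_0^t\|2s\varphi(s)\|^2\,ds$ your route produces (after converting $\|f\|_{L^{\infty}}\le\|f(0)\|+T\|f'\|_{L^{\infty}}$) a term $\epsilon^{-2}t^{3+\alpha\epsilon}\|f'\|^2_{L^{\infty}(0,T;D(A^{\epsilon/2}))}$, while the only constituent of $\mathcal{B}_1$ carrying that norm is $\epsilon^{-1}t^{5/2-\alpha(1-\epsilon/2)}\|f'\|_{L^{\infty}(0,T;D(A^{\epsilon/2}))}$ (inside $\Lambda_1,\Lambda_2$ of \eqref{bh_thetaconstnts_smth}); the required inequality $t^{3}\lesssim t^{5-2\alpha}$ fails as $t\to0$ for every $\alpha<1$. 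A similar deficit of $t^{1-\alpha}$ appears in the $\|f'\|_{L^{\infty}}$ piece of $t^2\|Au(t)\|$ when $\alpha<1/2$. So the proposal proves the lemma only with a weaker $\widetilde{\mathcal{B}}_1$ containing $\epsilon^{-1}t^{3/2+\alpha\epsilon/2}\|f\|_{L^{\infty}(0,T;D(A^{\epsilon/2}))}$, not with the stated $\mathcal{B}_1$; to recover the stated powers one must, as in the paper and in Step 3 of Lemma \ref{bhlemma2}, exploit the decay of $\mathcal{I}^{1-\alpha}u'$ near $s=0$ rather than the pointwise size of $f-Au$.
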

\begin{proof}
\noindent{\textit{Step 1 (Nonsmooth data).}} For $f=0$ and $u_0\in L^2(\Omega)$, we recall \eqref{bhminiest2} to arrive at
\begin{align*}
\varphi_2(t)&=t^2\mathcal{I}^{1-\alpha}u'(t)= \mathcal{I}^{1-\alpha}(u_2)'(t)-2\alpha\mathcal{I}^{1-\alpha}(u_1)(t)-\alpha(1-\alpha)\mathcal{I}^{2-\alpha}(u(t))-t^2\kappa_{1-\alpha}(t)u_0\\
&=\mathcal{I}^{1-\alpha}(2tu(t)+t^2u'(t))-2\alpha\mathcal{I}^{1-\alpha}(tu(t))-\alpha(1-\alpha)\mathcal{I}^{2-\alpha}(u(t))-t^2\kappa_{1-\alpha}(t)u_0.
\end{align*}
Applications of \eqref{rlfracintegral} and  \eqref{bhhomopfn_func_ns} with $i=p=0$ and   $i=1, p=0$ in the above equality yield $\|\varphi_2(t)\|\lesssim t^{2-\alpha}\|u_0\|$ and this leads to 
\begin{align*}
\|\varphi_2(t)\|^2+\int_{0}^{t}\|\varphi_2(s)\|^2\,ds\lesssim t^{4-2\alpha}\|u_0\|^2.    
\end{align*}
To bound the term $\varphi_2'(s)$, we see that
\begin{align}\label{bhbnd_6}
\varphi_2'(s)=2s\varphi(s)+s^2\varphi'(s) \quad \text{ with }\; \varphi(s)=\mathcal{I}^{1-\alpha}u'(s).
\end{align}
Differentiating $\varphi(s)$ with respect to $s$ we get
\begin{align*}
\begin{aligned}
\varphi'(s)&= \frac{1}{\Gamma(1-\alpha)}\int_{0}^{s}\tau^{-\alpha}u''(s-\tau)\,ds + \frac{s^{-\alpha}u'(0)}{\Gamma(1-\alpha)}
= \mathcal{I}^{1-\alpha}u''(s)+ \kappa_{1-\alpha}(s)u'(0).
\end{aligned}
\end{align*}
Thus, in view of (\ref{bhminiest2}) we arrive at
\begin{align*}
s^2\varphi'(s)&= \mathcal{I}^{1-\alpha}(s^2u')'-2\alpha\mathcal{I}^{1-\alpha}(su')-\alpha(1-\alpha)\mathcal{I}^{2-\alpha}(u')-s^2\kappa_{1-\alpha}(s)u'(0)+ s^2\kappa_{1-\alpha}(s)u'(0)\\
&=\mathcal{I}^{1-\alpha}(s^2u')'-2\alpha\mathcal{I}^{1-\alpha}(su')-\alpha(1-\alpha)\mathcal{I}^{2-\alpha}(u').
\end{align*}
Hence 
$\|s^2\varphi'(s)\| \lesssim \|\mathcal{I}^{1-\alpha}(s^2u')'\|+\|\mathcal{I}^{1-\alpha}(su')\|+\|\mathcal{I}^{2-\alpha}(u')\|$.
The definition of $\mathcal{I}^{1-\alpha}$ and   (\ref{bhhomopfn_func_ns}) first with $i=1$, $p=0$, next with $i=0$, $p=0$, and (\ref{bhhomopfn_ddtve_ns}) lead to 
\begin{align*}
\begin{aligned}
\|s^2\varphi'(s)\| &\lesssim \int_{0}^{s}(s-\tau)^{-\alpha}\big(\norm[0]{\tau u'(\tau)}+\norm[0]{\tau^2u''(\tau)}+\|u(\tau)\|+\|u_0\|\big)\,d\tau\lesssim s^{1-\alpha}\|u_0\|.
\end{aligned}
\end{align*}
This and \textit{Step 3} of  Lemma \ref{bhlemma2}  in (\ref{bhbnd_6}) show 
\begin{align*}
&\norm[0]{\varphi_2'(s)}\lesssim \norm[0]{s^2\varphi'(s)}+\norm[0]{s\varphi(s)}\lesssim  s^{1-\alpha}\|u_0\|.
\end{align*}
This shows
$\displaystyle \int_{0}^{t}\norm[0]{\varphi_2'(s)}^2\,ds\lesssim t^{3-2\alpha}\|u_0\|^2.$

\noindent{\textit{Step 2 (Smooth data).}} For $f\ne 0$ and $u_0\in D(A)$, follow the steps to bound $\|\widehat{\varphi_1}(t)\|^2$ for smooth data in the \textit{Step 3} in Lemma \ref{bhlemma2} and recall \eqref{beta} to arrive at 
\begin{align*}
\|\varphi_2(t)\|^2\lesssim \Big( t^2\norm[0]{u_0}_{D(A)}&+B(\alpha\epsilon/2,1-\alpha)t^{2-\alpha(1-\epsilon/2)}\norm[0]{f(0)}_{D(A^{\epsilon/2})}\\
&\quad+\epsilon^{-1}t^{3-\alpha(1-\epsilon/2)}\norm[0]{f'}_{L^{\infty}(0,T;D(A^{\epsilon/2}))}  \Big)^2.
\end{align*}
It is easy to establish a similar bound for $\int_{0}^{t}\|\varphi_2(s)\|^2\,ds$. 
The approach in \textit{Step 1} of this lemma and the stability properties in \eqref{bhhomopfn_dtve_s} with $ p=0$, \eqref{bhhomopfn_ddtve_s}, \eqref{bhnonhomo_dtve} with $q=\epsilon$, and \eqref{bhnonhomo_ddtve}  show
\begin{align*}
\int_{0}^{t}\|\varphi_2'(s)\|^2\,ds\lesssim  \Big(h^2t^{\alpha/2-2}B_1(\epsilon,t)\Big)^2.  
\end{align*}
The definition of $f_2$ and \eqref{algebraic} establish
\begin{align*}
 \|f_2(t)\|^2 +\int_{0}^{t}(\|f_2(s)\|^2 +\|f_2'(s)\|^2 )\,ds  \lesssim  \Big( t^2\|f(t)\|+t\|f\|_{L^2(0,t;L^2(\Omega))}+t^2\|f'\|_{L^2(0,t;L^2(\Omega))} \Big)^2.   
\end{align*}
A combination of {\it Steps} 1 and 2 establishes the assertion.
\end{proof}
\noindent We are now ready to prove Theorem \ref{bh_thetafnlest}. \\

\noindent{\it{Proof of Theorem \ref{bh_thetafnlest}}.} 
A substitution of the bounds from
Lemmas \ref{bhlemma1}-\ref{bhlemma4} in the key inequality from Lemma \ref{keylem_bhthetafinlthm} and algebraic manipulations establish
\begin{align*}
\norm[0]{\theta(t)}+t^{\alpha/2}\|\theta(t)\|_h\lesssim  
\begin{cases}
h^{2\gamma_0}t^{-\alpha}\norm[0]{u_0} +h^2t^{-(\alpha+1)/2}\norm[0]{u_0}, \;\; \text{ for }\; u_0\in L^2(\Omega), f=0,  \\
h^{2\gamma_0}\mathcal{B}_0(\epsilon,t) +h^2t^{\alpha/2-2}\mathcal{B}_1(\epsilon,t), \;\; \text{ for }\; u_0\in D(A), f\ne 0,
\end{cases}    
\end{align*}
and this concludes the proof. \hfill{$\Box$}

\section{Numerical illustrations}
The numerical experiments in this section validate the theoretical orders of convergences ($OCs$) for the semidiscrete solution established in Section \ref{bhsemierrsec}. 
Let $EOC$ denote the expected orders of convergence with respect to the space variable. The numerical experiments are performed using Freefem++ \cite{hecht12} with the following two sets of problem data on $\Omega=(0,1)^2$. To compute the discrete solution, we first triangulate $\overline{\Omega}$,
and then consider a uniform partition of $[0,t]$ with grid points $t_n=nk$, $n=0,1\ldots, N$, where $k=t/N$ is the time step size and $t$ is the time of interest. The Caputo fractional derivative $\partial_t^{\alpha}u(t)$ at $t=t_n$ is approximated by the L1 scheme (cf. \cite{sunwu06, linxu07}) as shown below:
\begin{align*}
\begin{aligned}
&\partial_t^\alpha u(t_n)= \frac{1}{\Gamma(1-\alpha)}\sum_{j=0}^{n-1}\int_{t_j}^{t_{j+1}}(t_n-s)^{-\alpha}\frac{\partial u(s)}{\partial s}\,ds \\
&\approx \frac{1}{\Gamma(1-\alpha)}\sum_{j=0}^{n-1}\frac{u(t_{j+1})-u(t_j)}{k}\int_{t_j}^{t_{j+1}}(t_n-s)^{-\alpha}\,ds
=\sum_{j=0}^{n-1}l_j\frac{u(t_{n-j})-u(t_{n-j-1})}{k^{\alpha}}\\
&=k^{-\alpha}\Big( l_0u(t_n)-l_{n-1}u(t_0)+\sum_{i=1}^{n-1}(l_j-l_{j-1})u(t_{n-j})\Big),
\end{aligned}
\end{align*}
where the weights $l_j$ are given by $l_j=((j+1)^{1-\alpha}-j^{1-\alpha})/\Gamma(2-\alpha),\;\;j=0,\ldots,N-1$. 

\subsection{Examples}
\begin{itemize}
\item[(i)] Choose the nonsmooth initial data as
\begin{align*}
u_0(x,y)=
\begin{cases}
1, & x\in (0,1/2], \; y\in (0,1),\\
-1, & x\in (1/2,1), \; y\in (0,1),
\end{cases}
\end{align*}
and $f(x,y,t)=0$. In this case, the exact solution $u(x,y,t)$ is not known. 

\item[(ii)] The manufactured exact solution $u$ given by $u(x,y,t)=(t^{\alpha+1}+1)(x(1-x)y(1-y))^2$ leads to the smooth initial data $u_0(x,y)=(x(1-x)y(1-y))^2$ and source function $f(x,y,t)=\Gamma(\alpha+2)t(x(1-x)y(1-y))^2+(t^{\alpha+1}+1)(24(x^2-2x^3+x^4)+24(y^2-2y^3+y^4)+2(2-12x+12x^2)(2-12y+12y^2))$.  Note that the initial data is in $D(A)$ and the source function belongs to the appropriate function space mentioned in Theorem \ref{bhsemidisctesmooth_finl} $(ii)$ for any $\epsilon\in (0,1/4)$. 
\end{itemize}
 For the dG method, the numerical computations are performed with the choice of penalty parameters as $\sigma_{\rm dG}^1=\sigma_{\rm dG}^2=2$, whereas for the $C^0$IP, we choose $\sigma_{\rm IP}=8$. 
\subsection{Order of spatial convergence for nonsmooth initial data}
Since the exact solution is not known in this case, the $OC$ is calculated by the following formula
\begin{align*}
OC= \log(\|W_{2h}-W_{h}\|_*/\|W_{h}-W_{h/2}\|_*)/\log2,   
\end{align*}
where $\|\cdot\|_*$ denotes either the $L^2(\Omega)$- or energy norm and $W_{h}$ is the discrete solution with mesh size $h$. The spatial numerical experiments are performed for case (i) with the mesh size $h=\{1/16,1/32,1/64,1/128, 1/256\}$, and fractional order $\alpha=0.25,0.50,0.75$ keeping the time step size $k$ fixed at  $k=0.001$.

For the Morley method, the numerical results are illustrated in Table \ref{bhnonsmooth_semiM}. The solution plots for the dG method are displayed in Figure \ref{dgnon_fig} on $256\times 256$ mesh with $\alpha=0.25,0.5,0.75$ with fixed $k=0.001$. Finally, for case $C^0$IP method, the empirical results are shown in Table \ref{c0nonsmooth_semi}. The energy norm and $L^2$ norm errors demonstrate linear and quadratic orders of convergence, respectively. These findings agree with the theoretical convergence rates established in Theorem  \ref{bhsemidisctesmooth_finl} $(i)$. 
\vspace{-0.2in}
\begin{table}[h!]
\scriptsize
\caption{The $L^2$-$OC$ and energy-$OC$ for the Morley method in case (i) for $\alpha=0.25, 0.50, 0.75 $ at $t=0.1$  with $k=0.001$}
\centering
\label{bhnonsmooth_semiM}
\begin{tabular}{ c c   c c c c } 
\hline
\rule{0pt}{3ex} 
$h$ & $\alpha $ & errors in $L^2$-norm  & $L^2$-$OC$ & errors in energy norm  & energy-$OC$    \\
\specialrule{.1em}{.05em}{.05em}
\multirow{ 4}{*}{}
1/16 & 0.25 & 4.50057e-05 & $-$ &  9.30188e-03  & $-$   \\ 
1/32 & & 1.18977e-05  & 1.91942 & 4.69592e-03  & 0.98611 \\ 
1/64 & & 3.07517e-06 &  1.95195 & 2.34989e-03  & 0.99881  \\
1/128 & & 7.63686e-07 & 2.00961 & 1.17378e-03 & 1.00143 \\ 
\hline
\multirow{4}{*}{} 1/16 & 0.50 & 5.59721e-05 & $-$ & 1.14995e-02& $-$  \\
1/32 & & 1.47909e-05 & 1.92000  & 5.80462e-03 &  0.98630  \\
1/64 & & 3.82260e-06 & 1.95208  & 2.90464e-03  & 0.99884  \\
1/128 & & 9.49291e-07 & 2.00963  & 1.45088e-03  & 1.00142 \\
\hline
\multirow{4}{*}{} 1/16 & 0.75 & 5.12578e-05 & $-$ & 1.02222e-02& $-$    \\
1/32 & & 1.35183e-05 & 1.92286  & 5.15662e-03  & 0.98720  \\
1/64 & & 3.49206e-06 & 1.95276   & 2.58010e-03 & 0.99899  \\
1/128 & & 8.67171e-07 & 2.00969  & 1.28878e-03  & 1.00142  \\
\hline
$EOC$ & & & 2.0 & & 1.0 \\
\hline
\end{tabular}
\end{table}
\begin{figure}[h!]
    \centering
    \includegraphics[scale=0.18]{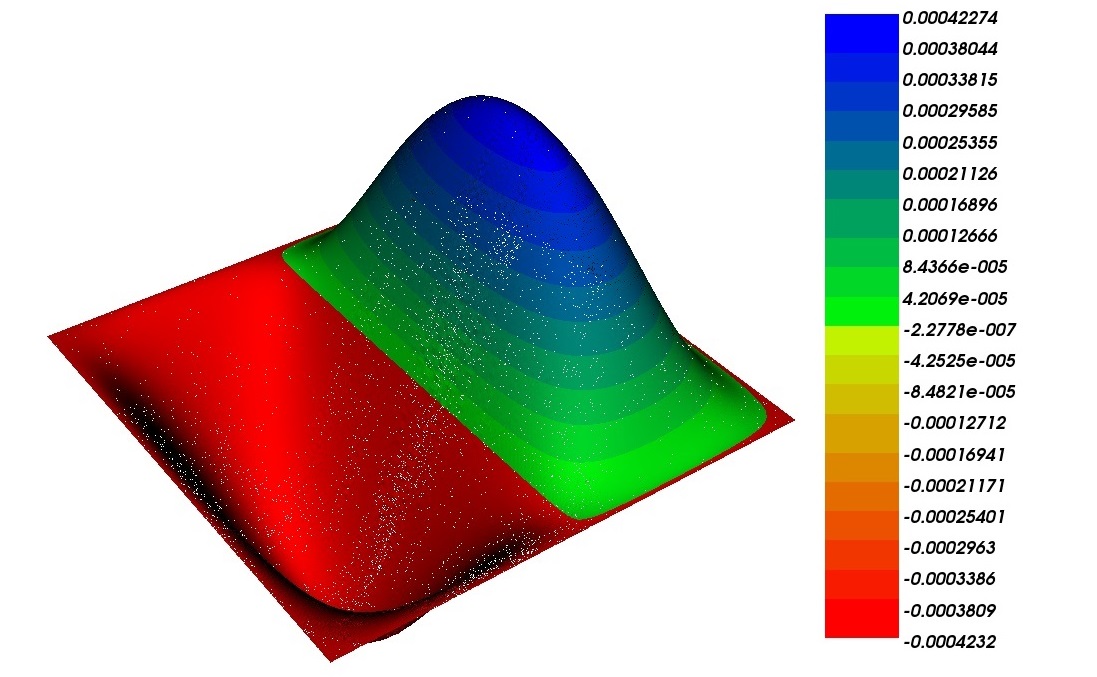}
    \includegraphics[scale=0.18]{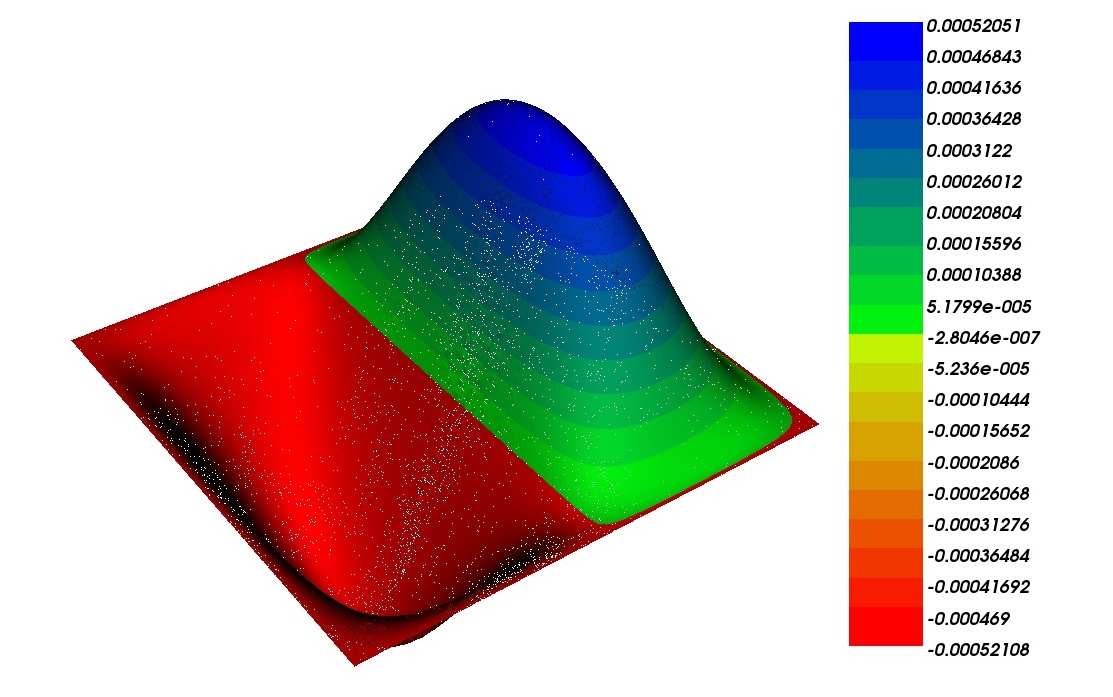}
    \includegraphics[scale=0.18]{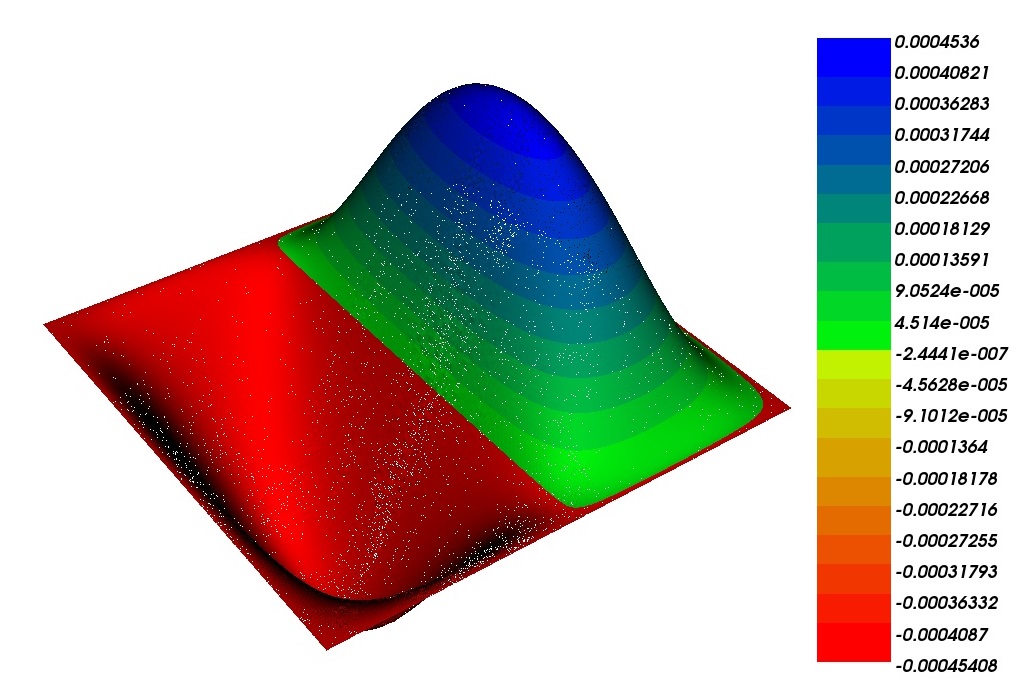}
    \caption{Numerical solutions of dG method on $256\times 256$ mesh with $\alpha=0.25,0.5, 0.75$.}
    \label{dgnon_fig}
\end{figure}
\begin{table}[H]
\scriptsize
\caption{The $L^2$-$OC$ and energy-$OC$ for the $C^0$IP method in case (i) for $\alpha=0.25, 0.50, 0.75 $ at $t=0.1$  with $k=0.001$ and $\sigma_{\rm IP}=8$}
\centering
\label{c0nonsmooth_semi}
\begin{tabular}{ c c   c c c c } 
\hline
\rule{0pt}{3ex} 
$h$ & $\alpha$ & errors in $L^2$-norm & $L^2$-$OC$ & errors in energy-norm  & energy-$OC$    \\
\specialrule{.1em}{.05em}{.05em}
\multirow{ 4}{*}{}
1/16 & 0.25 & 5.51766e-06 & $-$ & 3.13001e-03 & $-$   \\ 
1/32 & & 1.56160e-06 & 1.82103  & 1.59248e-03 & 0.97489 \\ 
1/64 & & 4.02967e-07 & 1.95429  & 7.94704e-04 & 1.00278 \\
1/128 & & 1.05072e-07 & 1.93928 & 4.02570e-04 & 0.98117 \\ 
\hline
\multirow{4}{*}{} 1/16 & 0.50 & 6.79515e-06 & $-$ & 3.85380e-03& $-$  \\
1/32 & & 1.92315e-06 & 1.82103 & 1.96072e-03 &  0.97489  \\
1/64 & & 4.96266e-07 & 1.95429 & 9.78472e-04 &  1.00278  \\
1/128 & & 1.29399e-07 & 1.93929 & 4.95661e-04 & 0.98117 \\
\hline
\multirow{4}{*}{} 1/16 & 0.75 &  5.92698e-06 & $-$  & 3.35774e-03& $-$  \\
1/32 & & 1.67746e-06 & 1.82102 & 1.70834e-03 & 0.97490 \\
1/64 & & 4.32868e-07 & 1.95428 & 8.52520e-04 & 1.00279  \\
1/128 & & 1.12867e-07 & 1.93930 & 4.31861e-04 & 0.98116 \\
\hline
$EOC$ & & & 2.0& & 1.0 \\
\hline
\end{tabular}
\end{table}
\subsection{Order of spatial convergence for smooth data}
The spatial numerical experiments are performed with the mesh size  $h=\{\frac{1}{12},\frac{1}{24},\frac{1}{48},\frac{1}{96}\}$, and fractional order $\alpha=0.25,0.50,0.75$ with a fixed time step size $k=0.001$.
For the Morley method, the results are presented in Table \ref{bhsmooth_semiM}. The convergence results for the dG scheme are demonstrated in Table \ref{dgsmooth_semi}. Further, the exact and numerical solutions plots are also shown in Figure \ref{dg_fig}. Table \ref{c0smooth_semi} and Figure \ref{c0_fig} display the numerical results and the exact and numerical solutions for the $C^0$IP method. From the experiments, we observe that the empirical results are independent of the fractional order $\alpha$. The energy norm and $L^2$ norm errors for the Morley, dG, and $C^0$IP methods are plotted in a log-log scale in Figure \ref{crfigures}. This validates the theoretical $OCs$ of Theorem \ref{bhsemidisctesmooth_finl} $(ii)$. 
\vspace{0.25in}
\begin{table}[h!]
\scriptsize
\caption{The $L^2$-$OC$ and energy-$OC$ for the Morley method in case (ii) for $\alpha=0.25, 0.50, 0.75 $ at $t=0.1$  with $k=0.001$}
\centering
\label{bhsmooth_semiM}
\begin{tabular}{ c c   c c c c } 
\hline
\rule{0pt}{3ex} 
$h$ & $\alpha$ & errors in $L^2$-norm & $L^2$-$OC$ & errors in energy-norm  & energy-$OC$    \\
\specialrule{.1em}{.05em}{.05em}
\multirow{ 4}{*}{}
1/12 & 0.25 & 2.03729e-04 & $-$ & 2.06933e-02 & $-$   \\ 
1/24 & & 5.22317e-05 & 1.96365  & 1.04746e-02 & 0.98226 \\ 
1/48 & & 1.31474e-05 & 1.99015  & 5.25437e-03 & 0.99531  \\
1/96 & & 3.29262e-06 & 1.99747  & 2.62936e-03 & 0.99880 \\ 
\hline
\multirow{4}{*}{} 1/12 & 0.50 & 1.98919e-04 & $-$ & 2.02103e-02& $-$    \\
1/24 & & 5.09999e-05 & 1.96361 & 1.02305e-02 &  0.98222  \\
1/48 & & 1.28375e-05 & 1.99013 & 5.13193e-03 &  0.99530  \\
1/96 & & 3.21502e-06 & 1.99746 & 2.56809e-03 &  0.99880 \\
\hline
\multirow{4}{*}{} 1/12 & 0.75 & 1.96278e-04 & $-$  & 1.99395e-02& $-$  \\
1/24 & & 5.03223e-05 & 1.96363 & 1.00933e-02 & 0.98223  \\
1/48 & & 1.26670e-05 & 1.99013 & 5.06308e-03 & 0.99530 \\
1/96 & & 3.17243e-06 & 1.99741 & 2.53364e-03 & 0.99880 \\
\hline
$EOC$ & & & 2.0& & 1.0 \\
\hline
\end{tabular}
\end{table}
\vspace{-0.42in}
\begin{table}[h!]
\scriptsize
\caption{The $L^2$-$OC$ and energy-$OC$ for the dG method in  case (ii) for $\alpha=0.25, 0.50, 0.75 $ at $t=0.1$  with $k=0.001$ and $\sigma_{\rm dG}^1=\sigma_{\rm dG}^2=2$}
\centering
\label{dgsmooth_semi}
\begin{tabular}{ c c   c c c c } 
\hline
\rule{0pt}{3ex} 
$h$ & $\alpha$ & errors in $L^2$-norm & $L^2$-$OC$ & errors in energy-norm  & energy-$OC$    \\
\specialrule{.1em}{.05em}{.05em}
\multirow{ 4}{*}{}
1/12 & 0.25 & 1.03592e-03 & $-$ & 6.45313e-02 & $-$   \\ 
1/24 & & 2.92835e-04 & 1.82275  & 3.05090e-02 & 1.08076 \\ 
1/48 & & 7.85626e-05 & 1.89817  & 1.41902e-02 & 1.10434  \\
1/96 & & 2.03336e-05 & 1.94998  & 6.97876e-03 & 1.02385 \\ 
\hline
\multirow{4}{*}{} 1/12 & 0.50 & 1.01130e-03 & $-$ & 6.30133e-02& $-$    \\
1/24 & & 2.85915e-04 & 1.82256 & 2.97960e-02 &  1.08054 \\
1/48 & & 7.67092e-05 & 1.89812 & 1.38593e-02 &  1.10426 \\
1/96 & & 1.98541e-05 & 1.94996 & 6.81611e-03 &  1.02383 \\
\hline
\multirow{4}{*}{} 1/12 & 0.75 & 9.97939e-04 & $-$  & 6.21740e-02& $-$  \\
1/24 & & 2.82122e-04 & 1.82263 & 2.93972e-02 & 1.08063  \\
1/48 & & 7.56902e-05 & 1.89814 & 1.36735e-02 & 1.10430  \\
1/96 & & 1.95904e-05 & 1.94996 & 6.72469e-03 & 1.02384 \\
\hline
$EOC$ & & & 2.0& & 1.0 \\
\hline
\end{tabular}
\end{table}
\begin{figure}[H]
    \centering
    \includegraphics[scale=0.20]{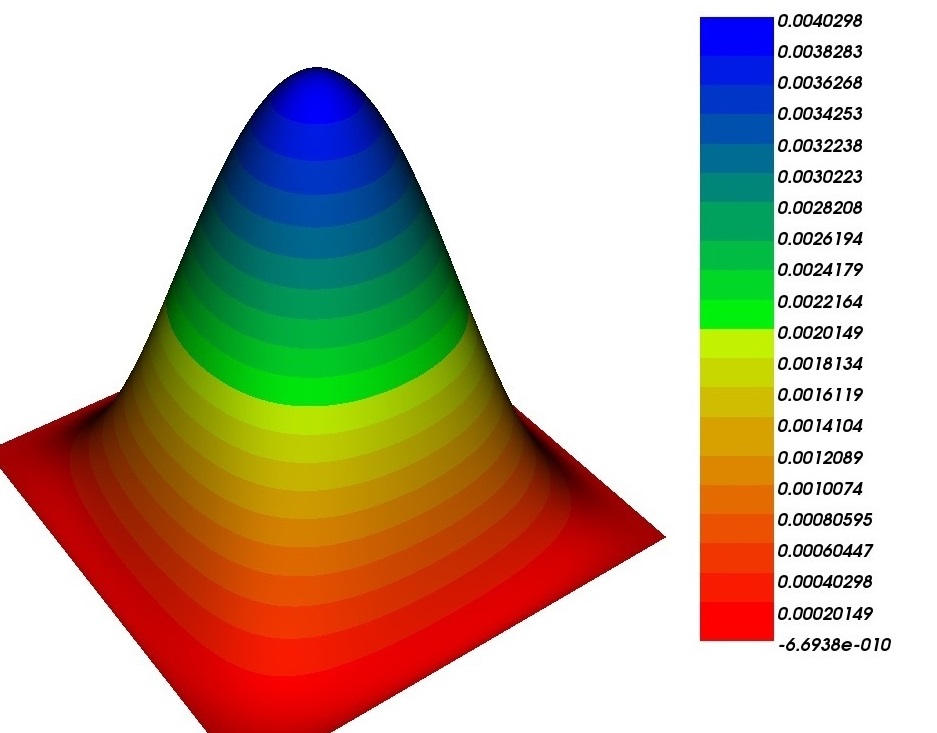}
    \includegraphics[scale=0.20]{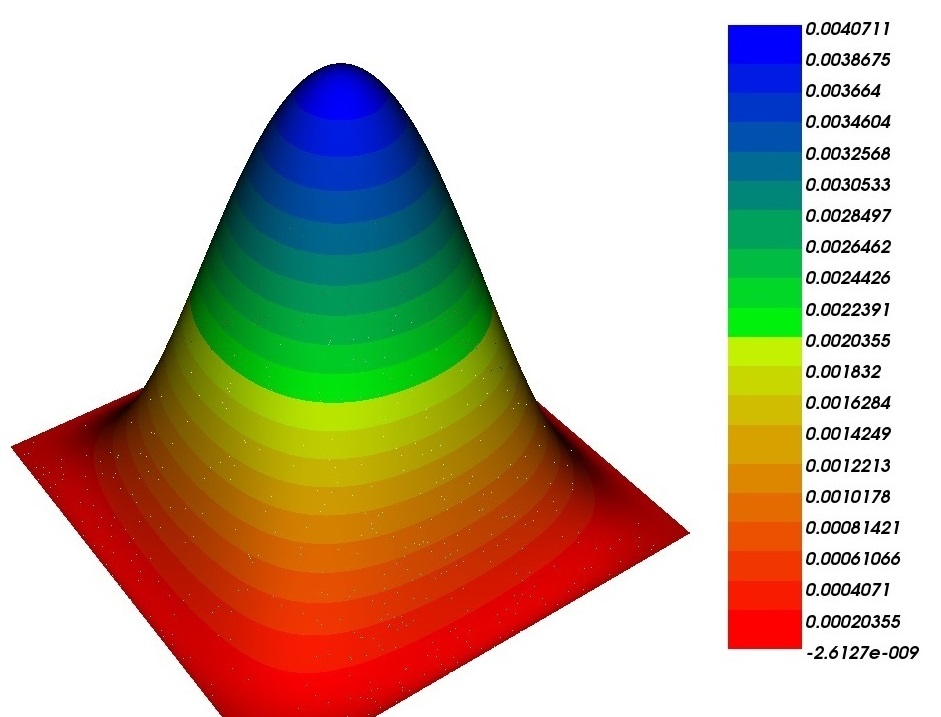}
    \caption{Exact (left) and numerical (right) solutions of dG method on $96\times 96$ mesh, $\alpha=0.5$.}
    \label{dg_fig}
\end{figure}
\vspace{-0.38in}
\begin{table}[h!]
\scriptsize
\caption{The $L^2$-$OC$ and energy-$OC$ for the $C^0$IP method in case (ii) for $\alpha=0.25, 0.50, 0.75 $ at $t=0.1$  with $k=0.001$ and $\sigma_{\rm IP}=8$}
\centering
\label{c0smooth_semi}
\begin{tabular}{ c c   c c c c } 
\hline
\rule{0pt}{3ex} 
$h$ & $\alpha$ & errors in $L^2$-norm & $L^2$-$OC$ & errors in energy-norm  & energy-$OC$    \\
\specialrule{.1em}{.05em}{.05em}
\multirow{ 4}{*}{}
1/12 & 0.25 & 1.15038e-04 & $-$ & 1.72387e-02 & $-$   \\ 
1/24 & & 3.29403e-05 & 1.80419  & 8.87466e-03 & 0.95788 \\ 
1/48 & & 8.67545e-06 & 1.92484  & 4.47807e-03 & 0.98681  \\
1/96 & & 2.21587e-06 & 1.96907  & 2.24645e-03 & 0.99522 \\ 
\hline
\multirow{4}{*}{} 1/12 & 0.50 & 1.12328e-04 & $-$ & 1.68369e-02& $-$    \\
1/24 & & 3.21638e-05 & 1.80421 & 8.66785e-03 &  0.95788  \\
1/48 & & 8.47091e-06 & 1.92485 & 4.37372e-03 &  0.98681  \\
1/96 & & 2.16363e-06 & 1.96906 & 2.19411e-03 &  0.99522 \\
\hline
\multirow{4}{*}{} 1/12 & 0.75 & 1.10834e-04 & $-$  & 1.66111e-02& $-$  \\
1/24 & & 3.17360e-05 & 1.80421 & 8.55157e-03 & 0.95788  \\
1/48 & & 8.35815e-06 & 1.92487 & 4.31505e-03 & 0.98681  \\
1/96 & & 2.13470e-06 & 1.96915 &  2.16467e-03 & 0.99522 \\
\hline
$EOC$ & & & 2.0& & 1.0 \\
\hline
\end{tabular}
\end{table}
\vspace{-0.28in}
\begin{figure}[H]
    \centering
    \includegraphics[scale=0.20]{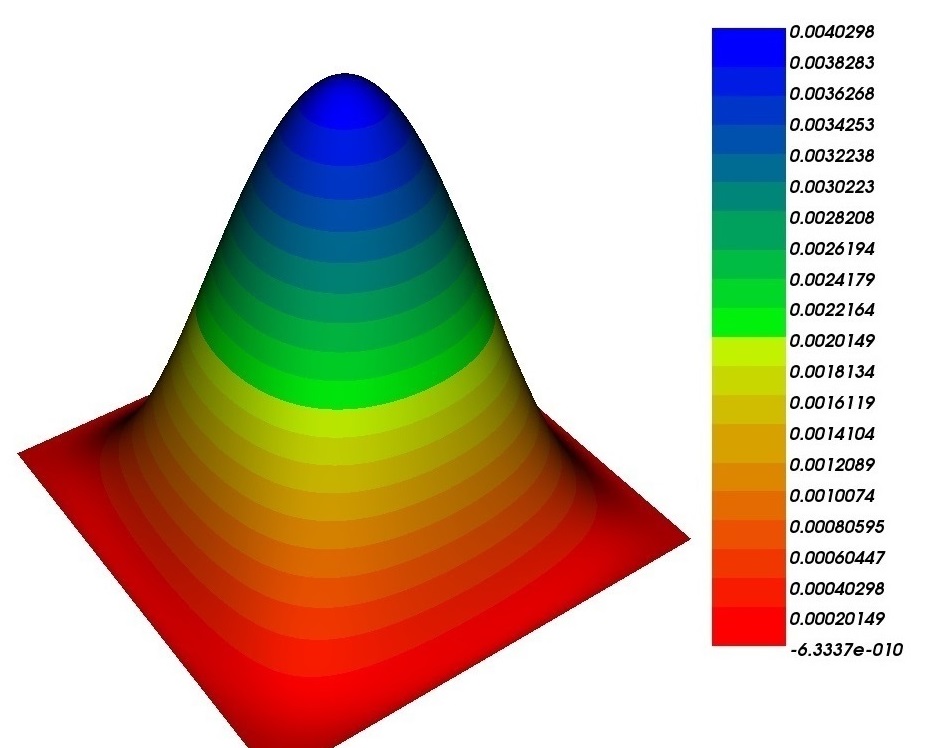}
    \includegraphics[scale=0.20]{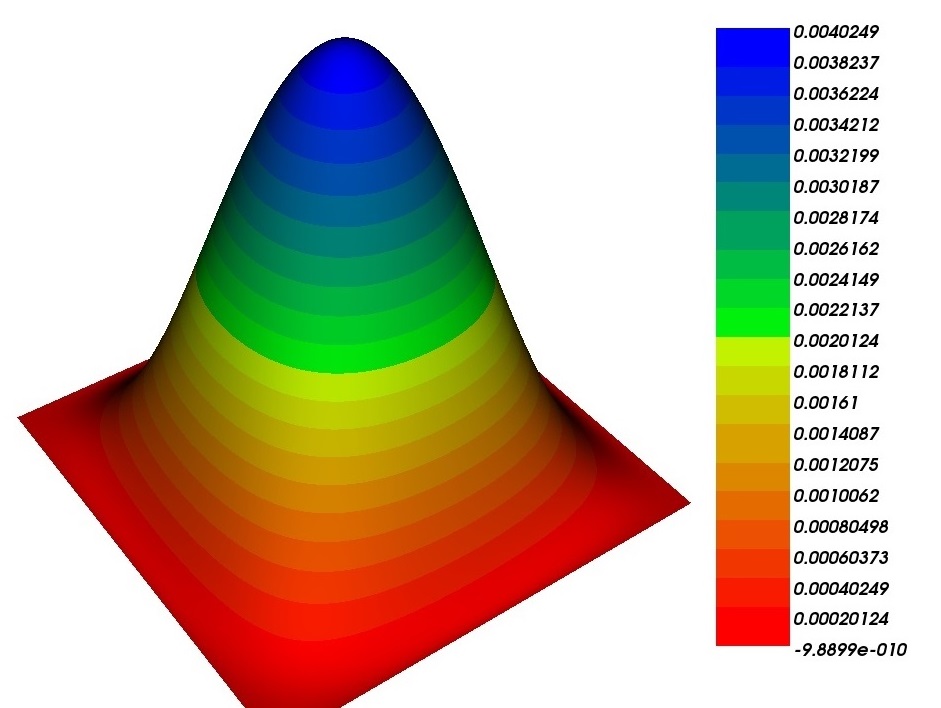}
    \caption{Exact (left) and numerical (right) solutions of $C^0$IP method on $96\times 96$ mesh, $\alpha=0.5$.}
    \label{c0_fig}
\end{figure}
\vspace{-0.3in}
\begin{figure}[H]
    \centering
    \includegraphics[scale=0.5]{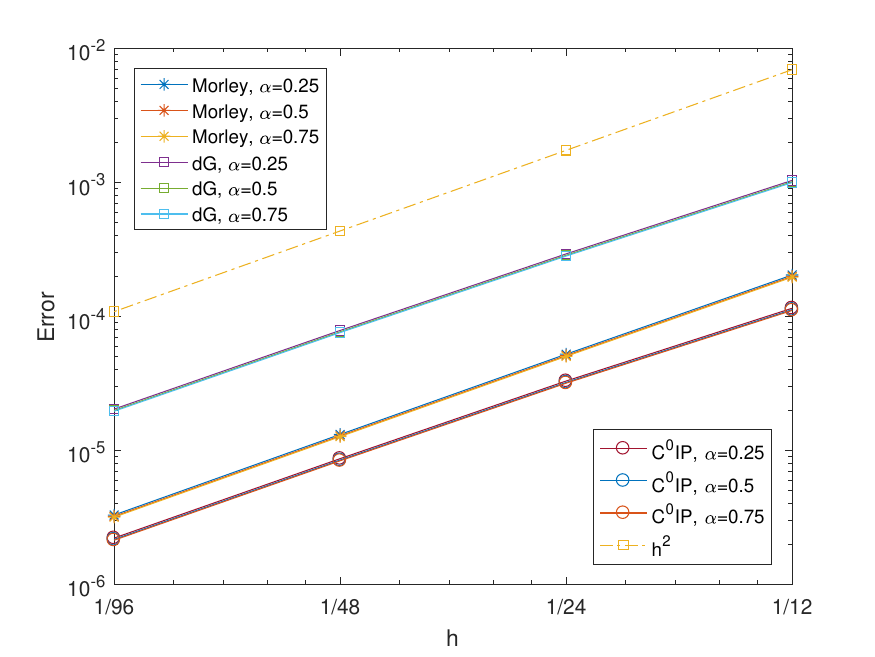}
    \includegraphics[scale=0.5]{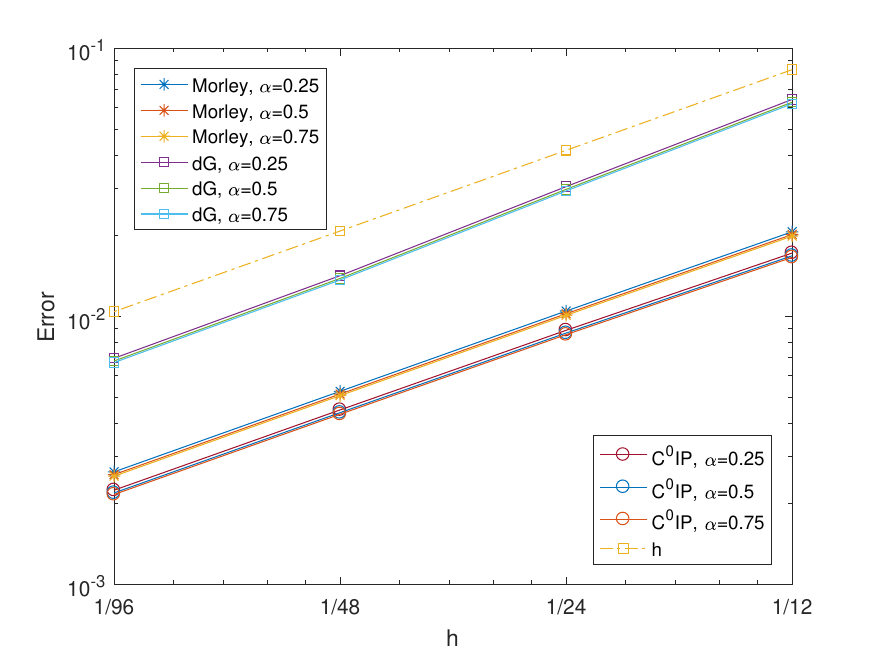}
    \caption{$L^2$ errors (left) and energy errors (right) plots for case (ii) for the Morley, dG, and $C^0$IP schemes at $t=0.1$ with $k=0.001$}
    \label{crfigures}
\end{figure}
\section{Concluding remarks}
 In this paper, we have studied an initial-boundary value problem for a time-fractional biharmonic problem with clamped boundary conditions in a bounded polygonal domain with a Lipschitz continuous boundary in $\mathbb{R}^2$. After defining a weak solution, we have proved the well-posedness result of the problem and derived several regularity results of the solutions of both homogeneous and nonhomogeneous problems which are useful in the error analysis. Using an energy argument a spatially semidiscrete scheme that covers various popular lowest-order nonstandard piecewise quadratic finite element schemes (e.g., the Morley, discontinuous Galerkin, and $C^0$ interior penalty) is developed and analyzed. The convergence analysis is carried out for smooth and nonsmooth initial data cases, including the initial data $u_0\in L^2(\Omega)$. Numerical results are provided to validate the theoretical convergence rates of the discrete solution. One future direction is the convergence analysis for time discretization, which will be addressed in another work.

\noindent{\bf Acknowledgements.}
 The second and third authors acknowledge the support from the IFCAM project “Analysis, Control and Homogenization of Complex Systems”. The second author acknowledges the support from SERB project on finite element methods for phase field crystal equation, code (RD/0121-SERBF30-003).

\bibliography{arxiv}
\bibliographystyle{alpha}

\newpage
\pagestyle{empty}
\appendix

\section{Appendix}
{In this section, we provide the details of the interpolation and companion operators and outline the proof of the error estimates of the Ritz projection defined in (\ref{bhritzprjn}). 
For details of the proof of the estimates, we refer to \cite{cn2022}.
\begin{lemma}[Morley interpolation and properties \cite{cn2022}]\label{bhmorley_lem}
The (extended) Morley interpolation operator $I_{\rm M}:  H^2(\T) \rightarrow {\rm M}(\mathcal{T})$  is defined in terms of degrees of freedom as follows:
\[
(I_{\rm M}v_{\pw})(z):= 
 |\T(z)|^{-1}
\sum_{K \in \T(z)} (v_\pw|_K)(z)
\text{ and }
 \fint_e\frac{\partial (I_{\rm M}v_{\pw})}{\partial \nu} \,ds := \fint_e \Big\{\!\!\!\Big\{\frac{\partial v_{\pw}}{\partial \nu} \Big\}\!\!\!\Big\}, \;\; v_{\pw}\in H^2(\T),
\]
where, for an interior vertex $z$, $\T(z)$ denotes the set of attached triangles with cardinality $|\T(z)|$.  The rest of the degrees of freedom at the vertices and edges on the boundary have no contribution due to homogeneous boundary conditions. The following properties hold for $I_{\rm M}$:
\begin{align}
\trinl I_\nc v_h   \trinr_\pw &\lesssim  \| v_h \|_h  \text{ for all }\;  v_h \in V_h \label{bhmorley_lem_extn1}  \\
\|v_h- I_\M v_h \|_h & \lesssim  \|v_h - v \|_h \text{ for all }v_h \in V_h \text{ and all }v \in V. \label{bhmorley_lem_extn2}
\end{align}
\end{lemma}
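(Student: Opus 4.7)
Both estimates rest on a single local scaling bound together with the identification of the degrees of freedom of $v_h-I_\M v_h$ as jumps of $v_h$. By affine equivalence to a reference triangle and the equivalence of norms on the finite-dimensional space $P_2(\hat K)$, one obtains the local estimate
\begin{align*}
|p|_{H^2(K)}^2 \lesssim h_K^{-2}\sum_{z\in\V(K)}|p(z)|^2 + \sum_{e\subset\partial K}\Big(\fint_e\partial_\nu p\,ds\Big)^2 \quad \text{for all } p\in P_2(K),
\end{align*}
with an implied constant independent of $h$. Since $I_\M$ produces a $P_2$ function on each $K$ specified precisely through these vertex values and edge-normal means, this estimate will control all relevant $H^2$-seminorms.

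To establish \eqref{bhmorley_lem_extn1}, I would apply the scaling estimate to $p=v_h-I_\M v_h\in P_2(K)$. Expanding the averages defining $I_\M$ gives $(v_h-I_\M v_h)|_K(z)=|\T(z)|^{-1}\sum_{K'\in\T(z)}(v_h|_K(z)-v_h|_{K'}(z))$, a weighted sum of vertex jumps of $v_h$, while $\fint_e\partial_\nu(v_h-I_\M v_h)$ reduces to (half of) $\fint_e\big[\!\!\big[\partial_\nu v_h\big]\!\!\big]$, with a zero partner on boundary edges owing to the clamped boundary conditions built into $\M(\T)$. Summation over $K\in\T$ then bounds $\trinl v_h-I_\M v_h\trinr_\pw$ by the jump terms inside $\|v_h\|_h$ (cf.\ \eqref{bhdisctenoms}), and the triangle inequality $\trinl I_\M v_h\trinr_\pw\le\trinl v_h\trinr_\pw+\trinl v_h-I_\M v_h\trinr_\pw\lesssim\|v_h\|_h$ closes the argument.

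For \eqref{bhmorley_lem_extn2}, the decisive observation is that $I_\M v_h\in\M(\T)$ has by construction vanishing vertex jumps and vanishing mean normal-derivative jumps, so the jump part of $\|v_h-I_\M v_h\|_h$ reduces to that of $v_h$ alone. For any $v\in V=H^2_0(\Omega)$, continuity of $v$ and $\fint_e\big[\!\!\big[\partial_\nu v\big]\!\!\big]=0$ imply that these jumps of $v_h$ equal the corresponding jumps of $v_h-v$, which are subsumed in $\|v_h-v\|_h$. It then remains to re-bound the piecewise seminorm: substituting $v_h|_K(z)-v_h|_{K'}(z)=(v_h-v)|_K(z)-(v_h-v)|_{K'}(z)$ (and analogously on edges) in the DOF identification above and reapplying the scaling estimate yields $\trinl v_h-I_\M v_h\trinr_\pw\lesssim\|v_h-v\|_h$, which together with the jump part proves \eqref{bhmorley_lem_extn2}. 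The main obstacle is the $h$-uniform scaling argument on the reference triangle and the careful bookkeeping of vertex and edge contributions when assembling local estimates into global ones; once these are handled, the identification of the DOFs as jumps is essentially algebraic.
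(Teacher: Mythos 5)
Your argument is correct and is essentially the standard proof of this lemma: the paper itself gives no proof (it defers to \cite{cn2022}), and the route taken there is exactly your combination of Morley unisolvence plus an $h$-scaled norm equivalence on $P_2(K)$, together with the identification of the degrees of freedom of $v_h-I_\M v_h$ as the vertex jumps and mean normal-derivative jumps entering $\|\cdot\|_h$ in \eqref{bhdisctenoms}, and the observation that these jumps vanish for $v\in V$ so that $v_h$ may be replaced by $v_h-v$. The only point worth making explicit in a written-out version is that the reference-element normal derivative picks up a tangential component under the affine pullback, which is harmless because the mean tangential derivative over an edge is a difference quotient of vertex values and is absorbed into the $h_K^{-2}\sum_z|p(z)|^2$ term.
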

\noindent Note that the standard Morley interpolation operator $I_{\rm M} : V \rightarrow {\rm M}(\mathcal{T})$  defined by $(I_{\rm M} v)(z)=v(z)$ and 
$\fint_e\frac{\partial I_{\rm M} v}{\partial \nu}\,ds=\fint_e\frac{\partial v}{\partial \nu}\,ds$  for any  $z\in \mathcal V(\Omega) $ and $ e\in {\mathcal{E}(\Omega)}$ satisfies (\cite{cn2022})
\begin{align}
& a_{\rm pw}(v-I_{\rm M}v, w_h) = 0  \text{ for all } v \in V \text{ and all } w_h \in P_2(\mathcal{T}),\label{morley_lem_adp1}\\
&\trinl v- I_{\rm M} v \trinr_{\rm pw} \lesssim h^{\gamma} \|v\|_{H^{2+\gamma}(\Omega)}  \text{ for all } v \in  H^{2+\gamma}(\Omega), 0 \le \gamma \le 1. \label{morley_lem_adp2}
\end{align} 
\begin{lemma}[Companion operator and properties \cite{carsput2020,gal2015,veeserzan2018, cn2022}]\label{bhcompanion_lem}
Let ${\rm HCT}(\mathcal{T})$ denote the Hsieh-Clough-Tocher FEM \cite[Chapter 6]{ciarlet78}. There exists a linear mapping $J: {\rm M}(\mathcal{T})\to ({\rm HCT}(\mathcal{T})+P_8(\mathcal{T})) \cap V$ such that any $w_{\rm M}\in {\rm M}(\mathcal{T})$ satisfies
\begin{align*}
\begin{aligned}
&\text{(i) } Jw_{\rm M}(z)=w_{\rm M}(z) \text{ for any } z\in\mathcal{V} \; \text{ (ii) } \nabla ({J}w_{\rm M})(z)=
|\mathcal{T}(z)|^{-1}\sum_{K\in\mathcal{T}(z)}(\nabla w_{\rm M}|_K)(z)
 \text{ for }z\in\mathcal{V}(\Omega) \\
&\text{ (iii) } \fint_e \partial J w_{\rm M}/\partial\nu \,ds=\fint_e \partial w_{\rm M}/\partial\nu \,ds \text{ for any } e\in\mathcal{E}\; \text{ (iv) }  \trinl w_{\rm M}- J w_{\rm M} \trinr_{\rm pw} \lesssim  \min_{v\in V}  \trinl w_{\rm M}- v \trinr_{\rm pw} \\
&\text{ (v) } w_{\rm M}- J w_{\rm M} \perp P_2(\mathcal{T}) \text{ in } L^2(\Omega)\;\text{ (vi) } I_{\rm M}J=\rm id \text{ in } {\rm M}({\mathcal{T}}) \\
&\text{ (vii) }  \sup_{w_\M \in \M(\T) \setminus\{0\}}\trinl w_\M -J w_\M  \trinr_{\pw}/\trinl w_\M \trinr_{\pw}\lesssim 1.
\end{aligned}
\end{align*} 
\end{lemma}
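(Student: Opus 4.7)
The plan is to construct $J$ in two stages: first a preliminary mapping $J_1: {\rm M}(\mathcal{T}) \to {\rm HCT}(\mathcal{T}) \cap V$, then a triangle-wise bubble correction in $P_8(\mathcal{T})$ to enforce the orthogonality in (v). To define $J_1 w_{\rm M}$ on the reduced HCT space (whose degrees of freedom on each macro-triangle are values and gradients at vertices together with normal-derivative moments on edges), I would set the vertex values to $w_{\rm M}(z)$, the gradients at interior vertices to the average $|\mathcal{T}(z)|^{-1}\sum_{K\in \mathcal{T}(z)}(\nabla w_{\rm M}|_K)(z)$, the gradients at boundary vertices to zero (forced by the clamped condition), and the normal-derivative moments on each edge $e$ to $\fint_e \partial w_{\rm M}/\partial\nu\,ds$. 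Single-valued assignment of each shared DOF automatically produces a globally $C^1$ function in $V$, and properties (i), (ii), (iii) are then built in.

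Next, to secure (v), I would add a local correction $\sum_{K\in\mathcal{T}} b_K$, where each $b_K \in P_8(K)$ is a \emph{triangle bubble} vanishing together with its gradient on $\partial K$ (so the sum lies in $V$ and does not alter any DOF of $J_1 w_{\rm M}$). A dimension count for polynomial bubbles of degree eight vanishing quadratically on $\partial K$ leaves ample freedom to solve the six linear conditions
\[
\int_K \bigl(w_{\rm M} - J_1 w_{\rm M} - b_K\bigr)\,q \, dx = 0 \quad \text{for all } q \in P_2(K),
\]
with a uniform (scaling-invariant) stability constant. Setting $Jw_{\rm M} := J_1 w_{\rm M} + \sum_K b_K$ therefore delivers (v) while preserving (i)--(iii), since the bubbles vanish to first order on each edge. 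Property (vi) is then purely algebraic: by Lemma~\ref{bhmorley_lem} the (extended) Morley DOFs are exactly the vertex values and edge normal-derivative averages, which, by (i) and (iii), coincide for $Jw_{\rm M}$ and $w_{\rm M}$, so $I_{\rm M} J w_{\rm M} = w_{\rm M}$.

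For the quasi-optimality (iv), I would exploit (vi) together with standard local HCT theory on each macro-element. For arbitrary $v \in V$, insert $I_{\rm M} v \in {\rm M}(\mathcal{T})$ and split
\[
\trinl w_{\rm M} - Jw_{\rm M} \trinr_{\pw} \le \trinl w_{\rm M} - I_{\rm M} v \trinr_{\pw} + \trinl I_{\rm M} v - v \trinr_{\pw} + \trinl v - J I_{\rm M} v \trinr_{\pw} + \trinl J(I_{\rm M} v - w_{\rm M})\trinr_{\pw}.
\]
The first three terms are controlled by \eqref{bhmorley_lem_extn1}--\eqref{bhmorley_lem_extn2} and the HCT interpolation estimate (since $J I_{\rm M}v$ differs from $v$ only through bubble-type data locally), while the last term is bounded, via element-wise scaling of the DOFs, by $\trinl w_{\rm M} - I_{\rm M} v \trinr_{\pw}$ and hence by $\trinl w_{\rm M} - v \trinr_{\pw}$. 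Taking the infimum over $v$ yields (iv), and (vii) is the special case $v \equiv 0$.

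The main obstacle will be the uniform stability of the bubble correction in step two: one must verify that the local Gram-type matrix sending $C^1$-bubble coefficients in $P_8(K)$ to $P_2(K)$-moments is invertible with a constant independent of $h_K$. This is handled by selecting a basis of products of the cubic barycentric bubble with lower-degree polynomials and transporting the system to the reference triangle by homogeneity. Once this local well-posedness and the scaling $\|b_K\|_{H^2(K)} \lesssim h_K^{-2}\|w_{\rm M}-J_1 w_{\rm M}\|_{L^2(K)}$ are in hand, all the remaining verifications reduce to bookkeeping at the Morley and HCT degrees of freedom.
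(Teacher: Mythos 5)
The paper does not actually prove this lemma: it is imported verbatim from the cited references, and the appendix explicitly defers all details to \cite{cn2022}. So there is no ``paper proof'' to match against; what can be assessed is whether your construction is sound. Your two-stage design --- assign the HCT degrees of freedom (vertex values, averaged gradients at interior vertices, zero data on $\partial\Omega$, edge normal-derivative means) and then add interior corrections of the form $b_K^2q_K$ with $q_K\in P_2(K)$ and $b_K$ the cubic barycentric bubble, so that the correction lies in $P_8(\mathcal T)$, vanishes with its gradient on every $\partial K$, and restores $L^2$-orthogonality to $P_2(\mathcal T)$ --- is exactly the construction in the cited literature, and your verifications of (i)--(iii), (v), (vi) and of the uniform solvability of the local $6\times 6$ Gram system are correct. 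One small compatibility check is worth recording: on a boundary edge the Morley function has affine normal derivative vanishing at the midpoint, so its edge mean is zero and is consistent with $Jw_{\rm M}\in V$.

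The genuine gap is in your argument for (iv). The four-term splitting uses, in its last term, the bound $\trinl J(I_{\rm M}v-w_{\rm M})\trinr_{\pw}\lesssim\trinl I_{\rm M}v-w_{\rm M}\trinr_{\pw}$, i.e.\ the $\trinl\cdot\trinr_{\pw}$-boundedness of $J$ on ${\rm M}(\mathcal T)$, which is equivalent to (vii); but you then obtain (vii) as the case $v=0$ of (iv), so the argument is circular. Moreover the term $\trinl v-JI_{\rm M}v\trinr_{\pw}$ cannot be controlled by ``the HCT interpolation estimate'' for an arbitrary $v\in V$: that estimate requires $H^{2+s}$ regularity of $v$ and produces a bound in terms of higher Sobolev seminorms, not in terms of $\trinl w_{\rm M}-v\trinr_{\pw}$. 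The non-circular route, and the one used in the references, is to estimate $\trinl w_{\rm M}-Jw_{\rm M}\trinr_{\pw}$ directly: on each $K$ the function $w_{\rm M}-Jw_{\rm M}$ is determined by differences of degrees of freedom across the patch (gradient jumps at vertices, tangential-derivative jumps along edges), so an elementwise scaling argument bounds $\trinl w_{\rm M}-Jw_{\rm M}\trinr_{\pw}^2$ by a sum of weighted edge jump terms of $w_{\rm M}$ and its derivatives; these jump terms are in turn known to be equivalent to $\min_{v\in V}\trinl w_{\rm M}-v\trinr_{\pw}^2$ (the distance-to-conformity characterization). With (iv) established this way, (vii) and the boundedness of $J$ follow immediately, and the rest of your outline stands.
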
 
\begin{lemma}[Transfer operator and its properties \cite{cn2022, cgn2015}]\label{bhtransfer_lem}
The linear transfer operator $I_h: {\rm M}(\mathcal{T}) \rightarrow V_h$   is chosen as the identity map for the Morley and dG methods, and  is defined for $C^0$IP scheme as follows. For all $w_{\rm M} \in {\rm M}(\mathcal{T})$,  for all $z \in \mathcal{V}$, $(I_h w_{\rm M})(z)=w_{\rm M}(z)$, for $z= \text{mid}(e)$ with $e \in \mathcal{E}(\Omega)$, $(I_h w_{\rm M})(z)=\big\{\!\!\!\big\{w_{\rm M} \big\}\!\!\!\big\}(z)$, and for $z= \text{mid}(e)$ with $e \in \mathcal{E}(\partial\Omega)$, $(I_h w_{\rm M})(z)=0$. 
It holds that 
\begin{align}\label{bhtrans_approx}
\|w_{\rm M} - I_h w_{\rm M}\|_h \lesssim  \trinl w_{\rm M} - v \trinr_{\rm pw}
\text{ for all } w_{\rm M} \in {\rm M}(\mathcal{T}) \text{ and }  v \in V.   
\end{align}
\end{lemma}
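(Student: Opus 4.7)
\textbf{Proof proposal for Lemma \ref{bhtransfer_lem}.}

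The plan is to reduce immediately to the only nontrivial scheme. For the Morley and dG methods, $I_h$ is the identity on $\M(\T)$, so $w_{\rm M}-I_h w_{\rm M}\equiv 0$ and the stated inequality holds trivially with both sides equal to zero. Hence the entire argument is devoted to the $C^0$IP case, where $V_h=P_2(\T)\cap H_0^1(\Omega)$ and $I_h w_{\rm M}$ is the unique continuous piecewise-quadratic that agrees with $w_{\rm M}$ at every vertex, equals $\{\!\!\{w_{\rm M}\}\!\!\}$ at each interior edge midpoint, and vanishes at boundary edge midpoints.

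First I would simplify the discrete norm $\|w_{\rm M}-I_h w_{\rm M}\|_h$ from \eqref{bhdisctenoms}. The vertex-jump terms drop out completely: $w_{\rm M}\in\M(\T)$ is single-valued at interior vertices and vanishes at boundary vertices, while $I_h w_{\rm M}\in V_h\subset H_0^1(\Omega)$ shares both properties by construction. The normal-derivative jump-mean term reduces to a bound involving only the midpoint values of $w_{\rm M}-I_hw_{\rm M}$: for $w_{\rm M}\in\M(\T)$, $\fint_e[\![\partial w_{\rm M}/\partial \nu]\!]\,ds=0$ by Morley continuity at midpoints (since the integrand is $P_1$ on $e$), so only the $I_h w_{\rm M}$ part remains, and it is controlled by an edge-trace/inverse estimate on the $P_2$ polynomial $w_{\rm M}-I_h w_{\rm M}$. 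What is left is thus an estimate of the broken $H^2$ seminorm plus these edge terms.

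Second I would use a nodal expansion: on each triangle $K$, $(w_{\rm M}-I_h w_{\rm M})|_K\in P_2(K)$ vanishes at all three vertices of $K$, so it equals $\sum_{e\subset\partial K}(w_{\rm M}-I_h w_{\rm M})(\mathrm{mid}(e))\,\phi_e$, where $\phi_e$ is the standard edge-midpoint $P_2$ basis function. Scaling gives $|\phi_e|_{H^2(K)}\lesssim h_K^{-1}$, and an analogous bound for the edge-trace $\|\partial\phi_e/\partial\nu\|_{L^1(e)}/h_e$. Combining, we obtain
\begin{equation*}
\|w_{\rm M}-I_h w_{\rm M}\|_h^2 \;\lesssim\; \sum_{K\in\T}h_K^{-2}\sum_{e\subset\partial K}|(w_{\rm M}-I_h w_{\rm M})(\mathrm{mid}(e))|^2.
\end{equation*}
By construction of $I_h$, for an interior edge $(w_{\rm M}-I_h w_{\rm M})(\mathrm{mid}(e))=\pm\tfrac{1}{2}[\![w_{\rm M}]\!](\mathrm{mid}(e))$, and for a boundary edge it equals $w_{\rm M}(\mathrm{mid}(e))$.

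Third, I would insert an arbitrary $v\in V$ by exploiting its continuity and homogeneous boundary trace: since $[\![v]\!]\equiv 0$ on interior edges and $v\equiv 0$ on $\partial\Omega$, the midpoint value in each case coincides with that of $w_{\rm M}-v$. The key scaled trace/Poincar\'e-Friedrichs estimate on the edge patch $\omega_e$ then reads
\begin{equation*}
|[\![w_{\rm M}-v]\!](\mathrm{mid}(e))|^2 \;\lesssim\; h_e^2\,|w_{\rm M}-v|_{H^2(\omega_e)}^2,
\end{equation*}
and similarly at boundary edges. This estimate is exactly where Morley continuity at vertices is essential: the polynomial trace $[\![w_{\rm M}]\!]|_e$ is a $P_2$ function vanishing at both endpoints of $e$, so its midpoint value is controlled by $h_e^{1/2}$ times its $L^2(e)$-norm, and the latter is in turn controlled via the standard edge-trace inequality applied to $w_{\rm M}-v$ combined with a Poincar\'e-type bound using the vanishing vertex values of $w_{\rm M}-v$. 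Summing over all edges and triangles and using shape-regularity to pass from edge patches back to a global sum yields $\|w_{\rm M}-I_h w_{\rm M}\|_h^2\lesssim\trinl w_{\rm M}-v\trinr_{\rm pw}^2$.

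The main obstacle, in my view, is the scaled Poincar\'e-trace estimate in the third step. One must carefully absorb the lower-order terms that normally appear in the edge-trace inequality into the piecewise $H^2$-seminorm alone, using vertex-vanishing properties of $w_{\rm M}-v$ (from Morley continuity combined with continuity and boundary vanishing of $v$) and the vanishing-mean property of the normal-derivative jump of $w_{\rm M}$. Once this scaling lemma is in hand, the rest is standard bookkeeping.
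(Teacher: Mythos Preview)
The paper does not give its own proof of this lemma; it simply states the result and cites \cite{cn2022,cgn2015}. Your proposal supplies the argument that the paper omits, and it is correct: reducing to the $C^0$IP case, expanding $w_{\rm M}-I_h w_{\rm M}$ in the edge-midpoint $P_2$ basis on each triangle, and bounding the midpoint coefficients $\tfrac12[\![w_{\rm M}]\!](\mathrm{mid}(e))=\tfrac12[\![w_{\rm M}-v]\!](\mathrm{mid}(e))$ by $h_e|w_{\rm M}-v|_{H^2(\omega(e))}$ via the linear Lagrange interpolant (whose jump at $\mathrm{mid}(e)$ vanishes because $w_{\rm M}-v$ has no vertex jumps) together with the $H^2\hookrightarrow C^0$ interpolation estimate $\|g-I_1 g\|_{L^\infty(K)}\lesssim h_K|g|_{H^2(K)}$. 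This is precisely the route taken in the cited references.

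One small clarification you could make explicit: the normal-derivative jump-mean term in $\|\cdot\|_h$ is handled without circularity by noting that $w_{\rm M}-I_h w_{\rm M}\in P_2(\T)$ vanishes at all vertices, so on each $K$ the inverse-type equivalences $\|w_{\rm M}-I_h w_{\rm M}\|_{L^\infty(K)}\approx h_K^{-1}\|w_{\rm M}-I_h w_{\rm M}\|_{L^2(K)}\approx h_K|w_{\rm M}-I_h w_{\rm M}|_{H^2(K)}$ hold; hence $|\fint_e[\![\partial_\nu(w_{\rm M}-I_h w_{\rm M})]\!]\,ds|\lesssim |w_{\rm M}-I_h w_{\rm M}|_{H^2(\omega(e))}$ and the edge term is absorbed into the piecewise $H^2$ seminorm before you invoke the nodal expansion.
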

\begin{lemma}[Relation between $\trinl \cdot \trinr_{\rm 
 pw}:=  a_{\rm pw}(\cdot,\cdot)^{1/2}$ and $\|\cdot\|_h$ \cite{cn2022}]\label{bhdisnorms_rel}
 The norms  $\|\cdot\|_h$ in  $H^2(\mathcal{T})$ and $\trinl \cdot \trinr_{\rm 
 pw}$ in $V+{\rm M}(\mathcal{T})$ are related by 
 \begin{align*}
 ({\rm i})\;\;     \trinl \cdot \trinr_{\rm pw}=\|\cdot \|_h \text{ in } V+{\rm M}(\mathcal{T}) \;\;  \text{ and }\;\;  ({\rm ii})\;\; \trinl \cdot \trinr_{\rm pw} \le \|\cdot \|_h \text{ in }   H^2(\mathcal{T}).
 \end{align*}
\end{lemma}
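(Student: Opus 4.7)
The plan is to unfold both quantities from their definitions in \eqref{bhdisctenoms} and the identity $\trinl v \trinr_{\rm pw}^2 = a_{\rm pw}(v,v) = \sum_{K\in\T}|v|_{H^2(K)}^2$, and then control the two jump contributions in $\|\cdot\|_h^2$, namely
\[
\mathcal{J}_V(v):=\sum_{e\in\E}\sum_{z\in\V(e)}h_e^{-2}\bigl|\jump{v}(z)\bigr|^2 \quad\text{and}\quad \mathcal{J}_E(v):=\sum_{e\in\E}\Bigl|\fint_e \Big[\!\!\Big[\tfrac{\partial v}{\partial\nu}\Big]\!\!\Big]\,ds\Bigr|^2,
\]
so that $\|v\|_h^2=\trinl v\trinr_{\rm pw}^2+\mathcal{J}_V(v)+\mathcal{J}_E(v)$.

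For part (ii), the claim is immediate: since $\mathcal{J}_V$ and $\mathcal{J}_E$ are non-negative on $H^2(\T)$, one has $\trinl v\trinr_{\rm pw}^2\le \|v\|_h^2$ for every $v\in H^2(\T)$, hence the asserted inequality. No further work is needed.

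For part (i), the strategy is to show that $\mathcal{J}_V(v)=\mathcal{J}_E(v)=0$ for every $v\in V+{\rm M}(\T)$, so that the remaining piecewise Hessian part exactly coincides with $\|v\|_h^2$. By linearity of the jump operator, it suffices to check the vanishing on each summand $v\in V$ and $v_{\rm M}\in {\rm M}(\T)$ separately. For $v\in V=H^2_0(\Omega)$: $v$ is continuous across interior edges with continuous normal derivative across interior edges, so $\jump{v}\equiv 0$ and $\jump{\partial v/\partial\nu}\equiv 0$ on every interior edge; on boundary edges the homogeneous boundary conditions $v=\partial v/\partial\nu=0$ make the corresponding boundary jump/trace vanish by the convention recalled before Lemma~\ref{bhmorley_lem}. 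For $v_{\rm M}\in{\rm M}(\T)$: by the definition of the Morley space, $v_{\rm M}$ is continuous at every interior vertex and vanishes at every boundary vertex, which kills $\mathcal{J}_V(v_{\rm M})$. For $\mathcal{J}_E(v_{\rm M})$, the key observation is that $\partial v_{\rm M}/\partial\nu|_K$ is affine on each edge $e$ because $v_{\rm M}|_K\in P_2(K)$; consequently, by the midpoint quadrature rule (exact for affine functions),
\[
\fint_e\Bigl[\!\!\Bigl[\tfrac{\partial v_{\rm M}}{\partial\nu}\Bigr]\!\!\Bigr]\,ds=\Bigl[\!\!\Bigl[\tfrac{\partial v_{\rm M}}{\partial\nu}\Bigr]\!\!\Bigr]\bigl(\mathrm{mid}(e)\bigr),
\]
which vanishes by the defining Morley continuity of $\partial v_{\rm M}/\partial\nu$ at interior edge midpoints and by the homogeneous condition at boundary edge midpoints.

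No genuine obstacle is expected; the only subtle point is the midpoint-quadrature argument used to convert the integral-mean jump of $\partial v_{\rm M}/\partial\nu$ into a pointwise midpoint jump, which is what tailors the norm $\|\cdot\|_h$ to the Morley degrees of freedom. Once $\mathcal{J}_V$ and $\mathcal{J}_E$ are shown to vanish on $V$ and on ${\rm M}(\T)$, bilinearity finishes (i), and (ii) is essentially a tautology from the definition.
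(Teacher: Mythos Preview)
Your argument is correct. The paper does not give its own proof of this lemma but cites it from \cite{cn2022}; your decomposition $\|v\|_h^2=\trinl v\trinr_{\rm pw}^2+\mathcal{J}_V(v)+\mathcal{J}_E(v)$ and the vanishing of the jump terms on $V+{\rm M}(\T)$ is exactly the standard route, and the paper itself alludes to it just after \eqref{bhdisctenoms} when noting that $\|\chi_h\|_h=\trinl\chi_h\trinr_{\rm pw}$ for $\chi_h\in{\rm M}(\T)$ because the jump terms vanish.
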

\noindent  The bilinear form $a_h(\cdot,\cdot):(V_h+\M(\T))  \times  ( V_h+\M(\T))  \rightarrow {\mathbb R}$ in  (\ref{bhsemidisscheme}) has the general form 
\begin{align}\label{bha_hform}
a_h(v_h,w_h):=
a_\pw (v_h,w_h) +  b_h(v_h,w_h) 
+ c_h(v_h,w_h) \text{ for all } v_h,w_h\in  V_h+\M(\T),
 \end{align}
 where $a_\pw(\cdot,\cdot), b_h(\cdot,\cdot), c_h(\cdot,\cdot):(V_h+\M(\T))  \times( V_h+\M(\T))  \rightarrow {\mathbb R}$ are all bounded bilinear forms on $(V_h+\M(\T))$. The sufficient conditions in {\bf (H1)}-{\bf(H4)} from \cite[Subsections 5.4 and 6.1]{cn2022} help to establish the approximation properties of the Ritz projection. It is verified in  \cite{cn2022} that all the lowest-order methods considered in this article satisfy these hypotheses.  
\begin{description}
\item[(H1)] For all $v_\nc \in \M(\T)$, $ w_h \in V_h$, and $v ,w\in V$,
\begin{align*}
a_\pw(v_\nc, w_h - I_\nc w_h) + b_h(v_\nc, w_h- I_\nc w_h) \lesssim
\trinl v_\nc-v \trinr_\pw \|w_h -w\|_h.      
\end{align*}
\item[(H2)] For all $v_\nc,w_\nc \in \M(\T)$ and $ v_h, w_h \in V_h$,
\begin{align*}
\text{ (i) } b_h(v_\nc,w_\nc)=0  \;\; \text{ (ii) }\;
b_h(v_h + v_\nc, w_h+ w_\nc) \lesssim \|v_h+v_\nc\|_h \|w_h+ w_\nc\|_h.
\end{align*}
\item[(H3)] For all $v_h, w_h \in V_h$  and $ v,w \in V$, $c_h(v_h, w_h) \lesssim \| v - v_h\|_h  \|w- w_h\|_h$.
\item[(H4)] For all  $v_h \in V_h$, $w_\nc \in \M(\T)$, and all $v,w \in V$,  
\begin{equation*}
 a_\pw ( v_h -I_\nc v_h, w_\nc)+b_h( v_h, w_\nc)\lesssim \| v-v_h \|_h   \trinl w- w_\nc \trinr_\pw . 
\end{equation*}
\end{description}
\begin{theorem}[Energy norm bound]\label{bhritzenergy_pf}
Under the hypotheses {\bf (H1)}--{\bf (H3)},  the Ritz projection $\mathcal{R}_h$ satisfies 
\begin{align*}
\|u-\mathcal{R}_hu\|_h\lesssim  h^{\gamma} \|u\|_{H^{2+\gamma}(\Omega)}, \;\; \gamma\in [0,\gamma_0] \text{ with } \gamma_0\in (1/2,1].     
\end{align*}
\end{theorem}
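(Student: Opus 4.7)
The strategy is to exploit the coercivity of $a_h$ on $V_h$ (property \eqref{bhbilinearcoer_cts} in \textbf{(H)}) together with a triangle-type splitting that introduces a conforming/nonconforming approximation of $u$. I would first pick a suitable test object $v_h\in V_h$ constructed from $u$, and use the identity
\[
\beta_1\|\mathcal{R}_h u-v_h\|_h^2\le a_h(\mathcal{R}_h u-v_h,\mathcal{R}_h u-v_h)=a(u,Q(\mathcal{R}_h u-v_h))-a_h(v_h,\mathcal{R}_h u-v_h),
\]
which follows from the definition of the Ritz projection in \eqref{bhritzprjn}. The triangle inequality then gives
\[
\|u-\mathcal{R}_h u\|_h\le \|u-v_h\|_h+\|v_h-\mathcal{R}_h u\|_h,
\]
so the task reduces to (i) controlling an approximation error $\|u-v_h\|_h$, and (ii) controlling the consistency defect $a(u,Q\chi_h)-a_h(v_h,\chi_h)$ for $\chi_h=\mathcal{R}_h u-v_h\in V_h$.

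The natural choice of $v_h$ is the transferred (extended) Morley interpolant: $v_h:=I_h I_{\rm M}u\in V_h$, where $I_{\rm M}$ is from Lemma~\ref{bhmorley_lem} and $I_h$ from Lemma~\ref{bhtransfer_lem}. For (i), I would use \eqref{morley_lem_adp2} together with Lemmas~\ref{bhmorley_lem}--\ref{bhtransfer_lem} and Lemma~\ref{bhdisnorms_rel} to obtain
\[
\|u-I_h I_{\rm M}u\|_h\le \|u-I_{\rm M}u\|_h+\|I_{\rm M}u-I_h I_{\rm M}u\|_h\lesssim h^{\gamma}\|u\|_{H^{2+\gamma}(\Omega)}
\]
for $\gamma\in[0,\gamma_0]$, using \eqref{bhtrans_approx} with a suitable $v\in V$ (for instance $v=Q I_{\rm M}u=JI_{\rm M}u$) to absorb the transfer error into the Morley-interpolation error.

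For (ii), which I expect to be the main obstacle, I would decompose the consistency residual as
\[
\mathcal{E}(\chi_h):=a(u,Q\chi_h)-a_h(I_h I_{\rm M}u,\chi_h)
= a_{\pw}(u,Q\chi_h-\chi_h)
+\bigl[a_{\pw}(I_{\rm M}u-I_h I_{\rm M}u,\chi_h)-b_h(I_h I_{\rm M}u,\chi_h)-c_h(I_h I_{\rm M}u,\chi_h)\bigr]+a_{\pw}(u-I_{\rm M}u,\chi_h),
\]
and treat each bracket separately. The last term vanishes by \eqref{morley_lem_adp1} since $\chi_h\in P_2(\mathcal T)$. The middle bracket is handled term by term using \textbf{(H1)}--\textbf{(H4)} applied with the choices $v=u$, $w=Q I_{\rm M}u$, $v_h=I_h I_{\rm M}u$, $w_h=\chi_h$, $v_\nc=I_{\rm M}u$, together with the interpolation bound on $\|u-I_{\rm M}u\|_h$ and on $\|I_{\rm M}u-I_h I_{\rm M}u\|_h$ from Lemmas~\ref{bhmorley_lem}--\ref{bhtransfer_lem}; each contribution carries the factor $\|\chi_h\|_h\,h^{\gamma}\|u\|_{H^{2+\gamma}(\Omega)}$. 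For the first bracket $a_{\pw}(u,Q\chi_h-\chi_h)$ one invokes Lemma~\ref{bhcompmorlyestmte} with $s=2$ to bound $\|Q\chi_h-\chi_h\|_{H^2(\mathcal T)}\lesssim \min_{v\in V}\|\chi_h-v\|_h\lesssim \|\chi_h\|_h$, combined with the $L^2$-orthogonality $\chi_h-J I_{\rm M}\chi_h\perp P_2(\mathcal T)$ (Lemma~\ref{bhcompanion_lem}(v)) after writing $a_{\pw}(u,\cdot)$ in the form $(D^2u,D^2_{\pw}\cdot)$ and subtracting a piecewise-polynomial approximation of $D^2u$; this yields the extra factor $h^{\gamma}\|u\|_{H^{2+\gamma}(\Omega)}$ and closes the estimate.

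The core difficulty lies in this last piece: one has to trade regularity of $u$ for mesh size while using only the $L^2$-orthogonality of the smoother $Q$ against $P_2(\mathcal T)$, rather than a stronger $H^2$-type orthogonality. The hypotheses \textbf{(H1)}--\textbf{(H4)} are tailored precisely so that the stabilization terms $b_h,c_h$ and the nonconformity of $V_h$ do not spoil the optimal order $h^{\gamma}$. Combining (i) and (ii) and choosing $\chi_h=\mathcal{R}_h u-I_h I_{\rm M}u$ yields $\|\mathcal{R}_h u-I_h I_{\rm M}u\|_h\lesssim h^{\gamma}\|u\|_{H^{2+\gamma}(\Omega)}$, which together with the interpolation estimate for $\|u-I_h I_{\rm M}u\|_h$ concludes the proof.
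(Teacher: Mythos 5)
Your overall architecture is the paper's: the same choice $v_h=I_hI_{\rm M}u$, the same coercivity-plus-triangle-inequality reduction, and the same interpolation estimates \eqref{morley_lem_adp2} and \eqref{bhtrans_approx} for $\|u-I_hI_{\rm M}u\|_h$. Your decomposition of the consistency residual is algebraically equivalent to the paper's $T_1+T_2+T_3$, and the pieces you send to the hypotheses are treated as in the paper (note only that \textbf{(H4)} is neither assumed nor needed for the energy bound, and that your ``middle bracket'' cannot actually be closed term by term: the $b_h(I_{\rm M}u,\chi_h-I_{\rm M}\chi_h)$ contribution must be paired with $a_{\pw}(I_{\rm M}u,\chi_h-I_{\rm M}\chi_h)$ — which sits hidden inside your \emph{first} bracket — before \textbf{(H1)} applies).

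The genuine gap is your treatment of $a_{\pw}(u,Q\chi_h-\chi_h)$. First, the orthogonality you invoke is misquoted: Lemma~\ref{bhcompanion_lem}(v) gives $I_{\rm M}\chi_h-JI_{\rm M}\chi_h\perp P_2(\T)$ in $L^2$, not $\chi_h-JI_{\rm M}\chi_h\perp P_2(\T)$. Second, and decisively, $L^2$-orthogonality of a function against $P_2(\T)$ does not transfer to its piecewise Hessian: after subtracting a piecewise-constant approximation $\Pi_0D^2u$ of $D^2u$, the remainder $(\Pi_0D^2u,D^2_{\pw}(Q\chi_h-\chi_h))$ reduces on each triangle (take $p\in P_2(K)$ with $D^2p=\Pi_0D^2u|_K$ and integrate by parts twice; $\Delta^2p=0$ but the edge terms survive) to boundary integrals of $\nabla(Q\chi_h-\chi_h)$, which the $L^2(K)$-orthogonality does not annihilate. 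Since Lemma~\ref{bhcompmorlyestmte} with $s=2$ only yields $\|Q\chi_h-\chi_h\|_{H^2(\T)}\lesssim\|\chi_h\|_h$ with no power of $h$, your argument for this term produces $\|u\|_{H^2(\Omega)}\|\chi_h\|_h$ rather than $h^{\gamma}\|u\|_{H^{2+\gamma}(\Omega)}\|\chi_h\|_h$, and the estimate does not close. The correct mechanism is different: split $Q\chi_h-\chi_h=(J-I)I_{\rm M}\chi_h+(I_{\rm M}\chi_h-\chi_h)$. For the first piece, \eqref{morley_lem_adp1} together with $I_{\rm M}J={\rm id}$ (Lemma~\ref{bhcompanion_lem}(vi)) gives $a_{\pw}(u,(J-I)I_{\rm M}\chi_h)=a_{\pw}(u-I_{\rm M}u,(J-I)I_{\rm M}\chi_h)$, which is then finished by Cauchy--Schwarz with Lemma~\ref{bhcompanion_lem}(vii) and \eqref{bhmorley_lem_extn1}. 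For the second piece, \eqref{morley_lem_adp1} replaces $u$ by $I_{\rm M}u$, and the result is absorbed, together with the matching $b_h$ term, into \textbf{(H1)} with $w=0$. This is exactly how the paper's $T_1$ and $T_3$ are organized; the $L^2$-orthogonality of the companion is reserved for the duality ($L^2$-norm) estimate, not the energy bound.
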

\begin{proof}
Let $\zeta_h:=I_h I_{\rm M}u-\mathcal{R}_hu$ and split the error $u-\mathcal{R}_hu$ as a sum of three terms given by 
$u-\mathcal{R}_hu=(u-I_{\rm M}u)+(I-I_h)I_{\rm M}u+\zeta_h.$
Lemma \ref{bhdisnorms_rel} (i) and (\ref{bhtrans_approx}) show
\begin{align}\label{bhritzenergy_eq1}
\begin{aligned}
\|u-\mathcal{R}_hu\|_h &\lesssim  \trinl u - I_\nc u \trinr_\pw  +\|\zeta_h\|_h.
\end{aligned}
\end{align}
For $Q=JI_\M$, the coercivity of $a_h(\cdot,\cdot)$ from \eqref{bhbilinearcoer_cts} and (\ref{bhritzprjn}) reveal
\begin{align*}
\begin{aligned}
&\|\zeta_h\|_h^2\lesssim a_h(\zeta_h,\zeta_h)= a_h(I_h I_{\rm M}u,\zeta_h)-a_h(\mathcal{R}_hu,\zeta_h)=  a_h(I_h I_{\rm M}u,\zeta_h)-a(u,Q\zeta_h)\\
&= a_\pw( u, (I-J)I_\nc \zeta_h )- a_\pw(I_\nc u, I_\nc \zeta_h )+a_h(I_h I_{\rm M}u,\zeta_h)
\end{aligned}   
\end{align*}
with (\ref{morley_lem_adp1}) in the last step. 
Lemma \ref{bhcompanion_lem} (vi) plus
 (\ref{morley_lem_adp1}) show 
 $a_\pw( I_\nc u, (I-J)I_\nc \zeta_h )=0$. 
This, (\ref{bha_hform}), and \textbf{(H2)} (i) in the above displayed expression show
\begin{align}\label{bhzetabnd}
\begin{aligned}
\|\zeta_h\|_h^2&\lesssim a_\pw( u- I_\nc u, (I-J)I_\nc \zeta_h )+  (a_\pw+b_h)  ((I_h-I) I_\nc u , \zeta_h)\\
& \quad +   ((a_\pw+b_h)(I_\nc u, \zeta_h-I_\nc \zeta_h) +  c_h(I_h I_\nc u, \zeta_h))=: T_1+T_2+T_3.    
\end{aligned}    
\end{align}
The Cauchy-Schwarz inequality, Lemma \ref{bhcompanion_lem} (vii), and (\ref{bhmorley_lem_extn1}) establish 
\begin{align*}
T_1=a_\pw(u - I_\nc u, (I-J) I_\nc \zeta_h  ) \le  \trinl u - I_\nc u \trinr_\pw \trinl (I-J) I_\nc \zeta_h \trinr_\pw \lesssim  
\trinl u - I_\nc u \trinr_\pw \|\zeta_h\|_h.
\end{align*}
The Cauchy-Schwarz inequality once again, \textbf{(H2)} (ii), Lemma \ref{bhdisnorms_rel} (ii),  and (\ref{bhtrans_approx}) imply
\begin{align*}
T_2 &=  a_\pw  ((I_h-I) I_\nc u , \zeta_h)+b_h  ((I_h-I) I_\nc u , \zeta_h)\lesssim  \|(I-I_h) I_\nc u \|_h \|\zeta_h\|_h 
\lesssim \|u- I_\nc u \|_\pw \|\zeta_h\|_h.
\end{align*}
To bound the first term in $T_3$, apply  \textbf{(H1)} with $w=0$ 
to arrive at
\begin{align*}
(a_\pw+b_h)(I_\nc u, \zeta_h-I_\nc \zeta_h)\lesssim \trinl u - I_\nc u\trinr_\pw \|\zeta_h\|_h.%
\end{align*}
Utilize \textbf{(H3)} with $w=0$, the triangle inequality, (\ref{bhtrans_approx}), and Lemma \ref{bhdisnorms_rel} (i) to obtain
\begin{align*}
\begin{aligned}
c_h(I_h I_\nc u, \zeta_h)&\lesssim  \| u - I_h I_\nc u\|_h \|\zeta_h\|_h\le (\| u - I_\nc u\|_h+\| (I - I_h) I_\nc u\|_h)\|\zeta_h\|_h \lesssim \trinl u - I_\nc u\trinr_\pw\|\zeta_h\|_h.
\end{aligned}
\end{align*}
A combination of the bounds for $T_1$, $T_2$, and $T_3$ (from last two displayed inequalities) in (\ref{bhzetabnd}) and then in (\ref{bhritzenergy_eq1}), with (\ref{morley_lem_adp2}) to bound $\trinl u - I_\nc u\trinr_\pw$ concludes the proof. 
\end{proof}
\begin{theorem}[Weaker and piecewise Sobolev norm bound]\label{bhritzweak_pf}
Under the hypotheses {\bf(H1)}-{\bf (H4)}, the Ritz projection $\mathcal{R}_h$ satisfies  
\begin{align*}
\norm[0]{u-\mathcal{R}_hu}_{H^s(\mathcal{T})}\lesssim h^{2-s}\norm[0]{u-\mathcal{R}_hu}_h \text{ for all } 2-\gamma \le s\le 2  \text{ and for all } \gamma\in (0,\gamma_0].     
\end{align*}
In particular, we have  
$$ \|u- \mathcal{R}_hu\| \le \|u-\mathcal{R}_hu\|_{H^{2-\gamma}(\mathcal{T})} \lesssim h^{\gamma} \norm[0]{u-\mathcal{R}_h u}_h  \text{ for all } \gamma\in (0,\gamma_0]. $$  
\end{theorem}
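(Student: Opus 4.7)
The plan is an Aubin--Nitsche duality argument combined with Banach-space interpolation on broken Sobolev spaces. The endpoint $s=2$ is Lemma~\ref{bhdisnorms_rel}(ii), which gives $\|e\|_{H^2(\T)}\le \|e\|_h$ for $e:=u-\mathcal{R}_h u$. The substantive step is the other endpoint
\begin{equation}\label{pl:endpoint}
\|e\|_{H^{2-\gamma}(\T)}\lesssim h^\gamma \|e\|_h,\qquad \gamma\in(0,\gamma_0];
\end{equation}
once \eqref{pl:endpoint} is in hand, the range $s\in(2-\gamma,2)$ follows by Banach-space interpolation between the two endpoints, $\|e\|_{H^s(\T)}\lesssim \|e\|_{H^{2-\gamma}(\T)}^{(2-s)/\gamma}\|e\|_{H^2(\T)}^{(s-2+\gamma)/\gamma}\lesssim h^{2-s}\|e\|_h$, and the ``in particular'' $L^2$ statement is immediate from $\|\cdot\|\le\|\cdot\|_{H^{2-\gamma}(\T)}$.

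To prove \eqref{pl:endpoint}, I would split $e=w+r_h$ with $w:=u-Q\mathcal{R}_h u\in V$ conforming and $r_h:=Q\mathcal{R}_h u-\mathcal{R}_h u\in P_2(\T)$, where $Q=JI_{\rm M}$ is the smoother appearing in the Ritz definition \eqref{bhritzprjn}. The piecewise-polynomial remainder is controlled directly by Lemma~\ref{bhcompmorlyestmte} applied to $\chi_h=\mathcal{R}_h u$ with $v=u$ in the infimum, which gives $\|r_h\|_{H^s(\T)}\lesssim h^{2-s}\|e\|_h$. For the conforming residual $w\in V$, I would run an Aubin--Nitsche duality: let $\phi\in V$ solve $A\phi=w$; by the regularity index of Remark~\ref{index_remark}, $\phi\in V\cap H^{2+\gamma}(\Omega)$ with $\|\phi\|_{H^{2+\gamma}(\Omega)}\lesssim\|w\|$. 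Writing $\|w\|^2=a(w,\phi)=a(u,\phi)-a(Q\mathcal{R}_h u,\phi)$ and substituting the Ritz identity $a_h(\mathcal{R}_h u,\chi_h)=a(u,Q\chi_h)$ with $\chi_h:=I_hI_{\rm M}\phi\in V_h$, the computation closely mirrors the energy-norm proof of Theorem~\ref{bhritzenergy_pf}. Every resulting term is controlled by \textbf{(H1)}--\textbf{(H4)}, Lemma~\ref{bhcompanion_lem}(iv)--(vi), and the interpolation estimate \eqref{morley_lem_adp2}, $\trinl\phi-I_{\rm M}\phi\trinr_\pw\lesssim h^\gamma\|\phi\|_{H^{2+\gamma}(\Omega)}$, yielding $\|w\|\lesssim h^\gamma\|e\|_h$ and $\|w\|_{H^2(\Omega)}\lesssim\|e\|_h$. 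Sobolev interpolation on the conforming $w\in V$ then delivers the required bound on $\|w\|_{H^{2-\gamma}(\Omega)}$, and its sum with the $r_h$-bound produces \eqref{pl:endpoint}.

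The main obstacle is the duality computation $\|w\|^2=a(w,\phi)$: despite $w\in V$, the derivation unavoidably brings in the nonconforming Ritz identity through $Q\mathcal{R}_h u$, and the mismatches between $Q\chi_h$ and $\chi_h$ must be absorbed at the optimal rate $h^\gamma$. Hypothesis \textbf{(H4)} together with the companion properties in Lemma~\ref{bhcompanion_lem}---in particular the $L^2$-orthogonality $w_{\rm M}-Jw_{\rm M}\perp P_2(\T)$ from part (v) and the commuting identity $I_{\rm M}J=\mathrm{id}$ from part (vi)---are exactly what is required to collapse these inconsistencies into terms of the form $h^\gamma \|e\|_h\|\phi\|_{H^{2+\gamma}(\Omega)}$, in close analogy with the structure of Theorem~\ref{bhritzenergy_pf}.
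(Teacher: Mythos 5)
Your overall architecture (split $u-\mathcal{R}_hu=(u-Q\mathcal{R}_hu)+(Q\mathcal{R}_hu-\mathcal{R}_hu)$, control the piecewise-quadratic part by Lemma \ref{bhcompmorlyestmte}, and treat the conforming part $w=u-Q\mathcal{R}_hu$ by duality) matches the paper's, but the way you run the duality has a genuine gap that destroys the rate. You prove $\|w\|\lesssim h^{\gamma}\|u-\mathcal{R}_hu\|_h$ by testing against the solution of $A\phi=w$ with $L^2$ datum (so $\phi$ has only $H^{2+\gamma_0}$ regularity and the interpolation error of $\phi$ is only $O(h^{\gamma})$), and then propose to recover $\|w\|_{H^{2-\gamma}(\Omega)}\lesssim h^{\gamma}\|u-\mathcal{R}_hu\|_h$ by interpolating this $L^2$ bound with $\|w\|_{H^{2}(\Omega)}\lesssim\|u-\mathcal{R}_hu\|_h$. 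Since $H^{2-\gamma}(\Omega)$ sits at interpolation weight $\gamma/2$ on the $L^2$ side, what this actually yields is
\begin{align*}
\|w\|_{H^{2-\gamma}(\Omega)}\lesssim \|w\|^{\gamma/2}\,\|w\|_{H^{2}(\Omega)}^{1-\gamma/2}\lesssim h^{\gamma^{2}/2}\,\|u-\mathcal{R}_hu\|_h,
\end{align*}
and $\gamma^{2}/2<\gamma$ for every $\gamma\in(0,\gamma_0]\subset(0,1]$, so the endpoint estimate \eqref{pl:endpoint} does not follow. The underlying obstruction is that interpolating up to $H^{2-\gamma}$ from $L^2$ would require the $L^2$ rate $h^{2}$ (the value of $h^{2-s}$ at $s=0$), which is not available: the $L^2$ rate is capped at $h^{\gamma_0}$ by the dual regularity. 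The $L^2$ bound is a consequence of the $H^{2-\gamma_0}(\T)$ bound via the embedding, not a source from which it can be reconstructed.

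The repair --- and the route the paper takes --- is to run the duality directly in the $H^{-s}(\Omega)$/$H^{s}_0(\Omega)$ pairing for each $s\in[2-\gamma_0,2]$: by Hahn--Banach pick $\widetilde{\mathcal{F}}\in H^{-s}(\Omega)$ of unit norm with $\|u-v\|_{H^{s}(\Omega)}=\widetilde{\mathcal{F}}(u-v)=a(u-v,z)$, where $z$ solves $\Delta^2 z=\widetilde{\mathcal{F}}$ and hence lies in $V\cap H^{4-s}(\Omega)$ with $\|z\|_{H^{4-s}(\Omega)}\lesssim 1$; the Morley interpolation error of this dual solution is then $\trinl z-I_{\rm M}z\trinr_{\pw}\lesssim h^{2-s}$, which is precisely the factor needed when the key identity for $a(u-v,z)$ is bounded term by term using \textbf{(H1)}--\textbf{(H4)} and the companion-operator properties. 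With that modification the remaining ingredients of your plan (the Lemma \ref{bhcompmorlyestmte} bound for $Q\mathcal{R}_hu-\mathcal{R}_hu$ and the passage from $H^{s}(\Omega)$ to the broken norm $H^{s}(\T)$) go through, and no interpolation in the target scale is needed since each $s$ is handled directly.
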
  
\noindent The proof is based on duality arguments. Here we sketch only the main ideas and for details, we refer to \cite[Section 6]{cn2022}.
Note that the $L^2$ estimates in the last displayed expression cannot be improved further for $P_2$ approximations \cite{cn2021cmam}.

\medskip \noindent{\it Outline of the proof.}  For $u\in V$ and $v=Q \mathcal{R}_hu\in V$, the  duality of $H^{-s}(\Omega)$ and $H^s_0(\Omega)$ reveals 
\begin{align*}
\|u -v\|_{H^{s}(\Omega)} = \sup_{0 \ne \mathcal{F} \in H^{-s}(\Omega)} \frac{\mathcal{F}(u-v)}{\|\mathcal{F}\|_{H^{-s}(\Omega)}}= \widetilde{\mathcal{F}}(u-v).   
\end{align*}
A corollary of the Hahn-Banach theorem ensures that the supremum is attained for some  $\widetilde{\mathcal{F}}\in H^{-s}(\Omega)\subset V^*$ with $\|\widetilde{\mathcal{F}}\|_{H^{-s}(\Omega)} = 1$. The functional $a(z,\cdot)=\widetilde{\mathcal{F}} \in V^*$ has a unique Riesz representation $z \in V$ in the Hilbert space $(V, a)$;  $z\in V$ is the weak solution of  $\Delta^2 z = \widetilde{\mathcal{F}}$.  By virtue of the elliptic regularity \cite{blumrannacher80}, see also Remark \ref{index_remark}, we have $z\in V\cap H^{4-s}(\Omega)$, with $ \|z \|_{ H^{4-s}(\Omega)}\lesssim \|\widetilde{\mathcal{F}}\|_{H^{-s}(\Omega)}\lesssim 1$ for all $2 -\gamma \leq s\leq 2$ and all $\gamma\in(0,\gamma_0]$. 
Hence 
\begin{align}\label{bhweaknm_est}
\|u -v\|_{H^{s}(\Omega)}=a(u-v,z) \text{ and } 
 \trinl  z - I_\nc z \trinr _\pw \lesssim h^{2-s}  \|z \|_{ H^{4-s}(\Omega)} \lesssim h^{2-s}.
\end{align}
\noindent For $  z_h :=I_h I_\nc z \in V_h$ {and} $\xi:= Qz_h \in V
$, the definition of the Ritz projection $\mathcal{R}_h$ (cf. (\ref{bhritzprjn})) and elementary algebra show the key identity \cite[Lemma 6.3]{cn2022}:
\begin{align*}
& a(u-v,z) = a(u-v,z - \xi) +a_\pw(\mathcal{R}_hu-v, \xi-z_h) + a_{\pw} ((I-J)I_\nc \mathcal{R}_hu, z_h -I_\nc z_h) \notag \\
& +  a_{\pw} ((I-I_\nc) \mathcal{R}_hu , I_\nc z_h -\xi)+ a_h(\mathcal{R}_hu,z_h) - a_\pw (\mathcal{R}_hu, I_\nc z_h) + a_\pw ((I-I_\nc)\mathcal{R}_h u,z_h).
\end{align*} 
All the terms on the right-hand side of the  identity above can be bounded  in modulus by a constant times 
$\norm[0]{u-\mathcal{R}_hu}_h  \trinl  z - I_\nc z \trinr _\pw $. The proof of this part is skipped for brevity; we refer to Lemmas 6.4 and 6.5 in \cite{cn2022} for details. Hence with   \eqref{bhweaknm_est}, we have 
$$ a(u-v,z) \lesssim h^{2-s} \norm[0]{u-\mathcal{R}_hu}_h.$$

\noindent To conclude the proof,  split $u-\mathcal{R}_hu$ as
$ u-\mathcal{R}_hu=(u-v)+(v-\mathcal{R}_hu)$.  
Invoke (\ref{bhweaknm_est}) and the above displayed bound to establish
\begin{equation*}
\|u -v\|_{H^{s}(\Omega)}\lesssim  h^{2-s}\|u-\mathcal{R}_hu\|_h.
\end{equation*}
The definition of the Sobolev-Slobodeckii semi-norm shows
$\|u -v\|_{H^{s}(\T)}^2=\sum_{K\in\T} |u -v|_{H^s(K)}^2 \le |u -v |_{H^s(\Omega)}^2   \text{ for }\;  1<s<2.$
This and the last displayed estimate provide
\begin{align*}
\|u -v\|_{H^{s}(\T)}\lesssim  h^{2-s}\|u-\mathcal{R}_hu\|_h  \text{ for all }   2-\gamma \le s \le 2.  
\end{align*}
Since $\mathcal{R}_hu\in P_2(\T)$,  Lemma \ref{bhcompmorlyestmte} provides the estimate
$\|v -\mathcal{R}_hu\|_{H^{s}(\T)}\lesssim  h^{2-s}\|u-\mathcal{R}_hu\|_h.$
A triangle inequality and a combination of the last two estimates conclude the proof. 
 \hfill{$\Box$}

}

\end{document}